\newcommand{\RR}{\mathbb{R}}
\newcommand{\onelip}{1\text{\upshape-Lip}(\Omega)}
\newcommand{\lebesgue}{\mathcal{L}_d}
\DeclarePairedDelimiter\norm{\lVert}{\rVert}%
\DeclarePairedDelimiter\abs{\lvert}{\rvert}%
\DeclareMathOperator*{\essinf}{ess\,inf}
\DeclareMathOperator*{\spt}{spt}
\let\oldabs\abs
\def\abs{\@ifstar{\oldabs}{\oldabs*}}
\let\oldnorm\norm
\def\norm{\@ifstar{\oldnorm}{\oldnorm*}}
\newtheorem{theorem}{Theorem}
\newtheorem{lemma}[theorem]{Lemma}
\newtheorem{proposition}[theorem]{Proposition}
\theoremstyle{definition}
\newtheorem{definition}[theorem]{Definition}
\theoremstyle{remark}
\newtheorem{remark}[theorem]{Remark}
\def\ps@myheadings{%
    \let\@oddfoot\@empty\let\@evenfoot\@empty
    \def\@evenhead{\thepage\hfil\slshape\leftmark}%
    \def\@oddhead{{\slshape\rightmark}\hfil\thepage}%
    \let\@mkboth\@gobbletwo
    \let\sectionmark\@gobble
    \let\subsectionmark\@gobble
    }
  \renewcommand\maketitle{\begin{titlepage}%
  \let\footnotesize\small
  \let\footnoterule\relax
  \let \footnote \thanks
  \null\vfil
  \vskip 60\p@
  \begin{center}%
    {\LARGE \@title \par}%
    \vskip 3em%
    {\large
     \lineskip .75em%
      \begin{tabular}[t]{c}%
        \@author
      \end{tabular}\par}%
      \vskip 1.5em%
    {\large \@date \par}
  \end{center}\par
  \@thanks
  \vfil\null
  \end{titlepage}%
  \setcounter{footnote}{0}%
}
\renewcommand\maketitle{\par
  \begingroup
    \renewcommand\thefootnote{\@fnsymbol\c@footnote}%
    \def\@makefnmark{\rlap{\@textsuperscript{\normalfont\@thefnmark}}}%
    \long\def\@makefntext##1{\parindent 1em\noindent
            \hb@xt@1.8em{%
                \hss\@textsuperscript{\normalfont\@thefnmark}}##1}%
    \if@twocolumn
      \ifnum \col@number=\@ne
        \@maketitle
      \else
        \twocolumn[\@maketitle]%
      \fi
    \else
      \newpage
      \global\@topnum\z@   
      \@maketitle
    \fi
    \thispagestyle{plain}\@thanks
  \endgroup
  \setcounter{footnote}{0}%
}
\begin{document}

%
\runningtitle{A new method for determining Wasserstein 1 OT maps
from Kantorovich potentials, with deep learning applications}
\runningauthor{ Tristan Milne, {\'E}tienne Bilocq, and Adrian Nachman }

%

\twocolumn[

\conftitle{A new method for determining Wasserstein 1 optimal transport maps
from Kantorovich potentials, with deep learning applications}

\confauthor{ Tristan Milne\textsuperscript{1*} \And {\'E}tienne Bilocq\textsuperscript{1*} \And  Adrian Nachman\textsuperscript{1,2} }
\confaddress{University of Toronto}
\confaddress{\textsuperscript{1}Department of Mathematics \and \textsuperscript{2}The Edward S. Rogers Sr. Department of Electrical and Computer Engineering}
\vskip 0.3in plus 2fil minus 0.1in
]

\begin{abstract}
  Wasserstein 1 optimal transport maps provide a natural correspondence between points from two probability distributions, $\mu$ and $\nu$, which is useful in many applications.  Available algorithms for computing these maps do not appear to scale well to high dimensions. In deep learning applications, efficient algorithms  have been developed for approximating solutions of the dual problem, known as Kantorovich potentials, using neural networks  (e.g. \cite{gulrajani2017improved}).  Importantly, such algorithms work well in high dimensions. In this paper we present an approach towards computing Wasserstein 1 optimal transport maps that relies only on Kantorovich potentials. In general, a Wasserstein 1 optimal transport map is not unique and is not computable from a potential alone. Our main result is to prove that if $\mu$ has a density and $\nu$ is supported on a submanifold of codimension at least 2, an optimal transport map is unique and can be written explicitly in terms of a potential. These assumptions are natural in many image processing contexts and other applications. When the Kantorovich potential is only known approximately, our result motivates an iterative procedure wherein data is moved in optimal directions and with the correct average displacement. Since this provides an approach for transforming one distribution to another, it can be used as a multipurpose algorithm for various transport problems; we demonstrate through several proof of concept experiments that this algorithm successfully performs various imaging tasks, such as denoising, generation, translation and deblurring, which normally require specialized techniques.

\end{abstract}

\section{Introduction}
\label{sec:introduction}
Let $\mu$ and $\nu$ be probability distributions on a compact, convex domain $\Omega \subset \RR^d$. The Wasserstein 1 distance between $\mu$ and $\nu$, denoted $W_1(\mu,\nu)$, is given by
\begin{equation}
    W_1(\mu,\nu) = \inf_{T_{\#} \mu=\nu} \int_\Omega |x-T(x)| d\mu(x),\label{eq:wasserstein_1_def}
\end{equation}
where $T_{\#} \mu$ denotes the pushforward measure obtained by the formula $T_{\#} \mu(E) = \mu(T^{-1}(E))$. There are numerous applications of $W_1(\mu,\nu)$ in machine learning as a natural way of comparing distributions, notably for the training of Wasserstein GANs \cite{arjovsky2017wasserstein} where it serves as the objective function for training the generator. A solution $T_0$ to the optimization problem in \eqref{eq:wasserstein_1_def} --- called an optimal transport map --- is also of interest, since it  can be used to transport specific points $x$ distributed according to $\mu$ to naturally corresponding points $T_0(x)$ distributed according to $\nu$. This is useful in applications such as image denoising, translation, or deblurring where given $x$ sampled from $\mu$ we want a corresponding $y$ sampled from $\nu$ (i.e. a denoised, translated, or deblurred version). The transport map $T_0$ can also be used to sample $\nu$ in contexts where this correspondence is less important, such as image generation.

A commonly used method for approximating $W_1(\mu,\nu)$ for high dimensional problems with neural networks comes from Wasserstein GANs with Gradient Penalty (WGAN-GP) \cite{gulrajani2017improved}. This approach was inspired by the dual problem to \eqref{eq:wasserstein_1_def},
\begin{equation}
    W_1(\mu,\nu) = \sup_{|\nabla u| \leq 1} \int_\Omega u(x) d\mu(x) - \int_\Omega u(y) d\nu(y).\label{eq:dual_formula_for_W1}
\end{equation}
Here the constraint that $|\nabla u(x)| \leq 1$ almost everywhere is equivalent to $u \in \onelip$, the set of $1$-Lipschitz functions on $\Omega$. A solution $u_0$ to \eqref{eq:dual_formula_for_W1} is called a Kantorovich potential, and the method from \cite{gulrajani2017improved} is also often used to approximate $u_0$ (e.g. \cite{lunz2018adversarial}, \cite{tanaka2019discriminator}, \cite{ mohammadi2021regularization}). Obtaining a Wasserstein 1 optimal transport map $T_0$ is more challenging, however. Such a map has been proven to exist if $\mu$ has a density with respect to Lebesgue measure $\lebesgue$ (\cite{evans1999differential}, \cite{caffarelli2002constructing}, \cite{ambrosio2003existence}), but in general it is non-unique. Further, its construction requires information about $\mu$ and $\nu$ which is typically unavailable in practice, such as the cumulative distribution functions of conditional distributions of $\mu$ and $\nu$ on certain line segments of $\RR^d$ known as ``transport rays'' (see, e.g., \cite{caffarelli2002constructing}). On the other hand, some information on $T_0$ can be gleaned from a solution $u_0$ to \eqref{eq:dual_formula_for_W1}. Indeed, it is well known that such a function provides the normalized direction of transport, in the sense that $x \neq T_0(x)$ implies, with $\mu$ probability $1$, that
\begin{equation}
    -\nabla u_0(x) = \frac{T_0(x)-x}{|T_0(x) - x|}.\label{eq:normalized_direction_ofT}
\end{equation}
In the general setting this is as much as one can say about $T_0$ from $u_0$ alone. The main theoretical contribution of this paper is to provide conditions on $\mu$ and $\nu$ under which we \textbf{can} compute $T_0$ solely from a Wasserstein 1 Kantorovich potential. The key assumption is that, in addition to $\mu$ having a density with respect to Lebesgue measure in $\RR^d$ (a condition we denote by $\mu \ll \lebesgue$), $\nu$ is supported on a submanifold of $\RR^d$ of dimension no greater than $d-2$. This is natural in practice; for instance, if $\nu$ consists of real images, a common hypothesis in computer vision holds that it must inhabit a low-dimensional manifold in the ambient space $\RR^d$ \cite{pope2021intrinsic}. We will show that these assumptions on $\mu$ and $\nu$ induce a geometric condition on the Kantorovich potential $u_0$ such that one can determine the transport distance $|x-T_0(x)|$ using $u_0$ alone; since the direction of transport is already given by $-\nabla u_0(x)$, these two values specify the optimal transport map uniquely and provide a formula for it.

Our main theorem in this direction is the following. We note that in addition to constructing a transport map from a Wasserstein 1 Kantorovich potential, we prove that this map is unique. This is an extension of the uniqueness result of \cite{hartmann2020semi}, where it is assumed that $\nu$ is discrete and that $d \geq 2$; note that this case is subsumed by our hypotheses. In the statement of the theorem, $\spt(\nu)$ denotes the support of the measure $\nu$, which is the smallest closed set which $\nu$ assigns a measure of $1$.
\begin{theorem}
\label{thm:map_from_potential}
Let $\mu$ have a density with respect to Lebesgue measure $\lebesgue$, and let $\spt(\nu) \subset M$, where $M$ is a $C^1$ submanifold of $\RR^d$ with $\dim(M) \leq d-2$. Then the optimal transport map $T_0$ for $W_1(\mu,\nu)$ is unique up to modification on sets of $\mu$ measure zero. It is given by
\begin{equation}
    T_0(x) = x - \alpha(x) \nabla u_0(x),\label{eq:OT_map_from_potential_only}
\end{equation}
for $\alpha$ defined by
\begin{equation*}
    \alpha(x) = \sup\{ |x-z| \mid z\in \Omega, u_0(x) - u_0(z) = |x-z|\}.
\end{equation*}
\end{theorem}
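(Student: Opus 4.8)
The plan is to combine the classical structure theory for Wasserstein~$1$ optimal transport with the thinness of $\spt(\nu)$, via a Lebesgue‑measure argument on a ``cylinder over $M$''. First I would recall the standard picture (Evans, Caffarelli--Feldman--McCann, Ambrosio): a Kantorovich potential $u_0$ exists; the maximal segments on which $u_0$ descends with unit slope are the \emph{transport rays}; their relative interiors form the \emph{transport set} $\mathcal{T}$, on which $u_0$ is differentiable with $|\nabla u_0| = 1$, and through each $x \in \mathcal{T}$ there is a \emph{unique} transport ray $R_x = [a(x),b(x)]$ whose direction is $-\nabla u_0(x)$; the set of all ray endpoints is $\lebesgue$‑negligible; and for any optimal map $T_0$ the complementary slackness identity $u_0(x) - u_0(T_0(x)) = |x - T_0(x)|$ holds for $\mu$‑a.e.\ $x$, which forces $[x,T_0(x)]$ to be a sub‑segment of $R_x$ lying downstream of $x$. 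Since $\dim M \le d-2$ gives $\lebesgue(M) = 0$ and $\mu \ll \lebesgue$, we have $\mu(\spt\nu) = 0$, so $\mu$‑a.e.\ $x$ satisfies $x \notin \spt\nu$, hence moves, hence (discarding the null endpoint set) lies in $\mathcal{T}$. The theorem then reduces to the single statement
\[ T_0(x) = b(x) \qquad \text{for } \mu\text{-a.e.\ } x. \]
Indeed $b(x) = x + |x - b(x)|\bigl(-\nabla u_0(x)\bigr)$ since $R_x$ has unit‑speed descent and $|\nabla u_0(x)| = 1$; granting that $\alpha(x) = |x - b(x)|$, this is exactly $T_0(x) = x - \alpha(x)\nabla u_0(x)$; and since $b(x)$ is determined by $u_0$ alone, any two optimal maps agree $\mu$‑a.e., giving uniqueness.

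Identifying $\alpha$ is the easy step: for $x \in \mathcal{T}$ not an endpoint, $b(x) \in \Omega$ and $u_0(x) - u_0(b(x)) = |x - b(x)|$, so $b(x)$ is admissible in the supremum defining $\alpha(x)$; conversely, if $u_0(x) - u_0(z) = |x - z|$ with $z \ne x$, then $[x,z]$ is a unit‑descent segment containing the interior point $x$, hence (by uniqueness of the ray through $x$) lies in $R_x$ with $z$ downstream of $x$, so $|x - z| \le |x - b(x)|$. Thus $\alpha(x) = |x - b(x)|$ and the supremum is attained.

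For the core estimate, suppose the reduced statement fails, so $\mu\bigl(\{x : T_0(x) \ne b(x)\}\bigr) > 0$. Removing the $\mu$‑null sets of ray endpoints and of points in $\spt(\nu)$, we obtain a Borel set $A$ with $\mu(A) > 0$ such that for every $x \in A$: $x \in \mathcal{T}$, $x \notin M$, and $T_0(x) \in \spt(\nu) \cap (x, b(x))$. In particular $p := T_0(x)$ is an \emph{interior} point of $R_x$, so $p \in M \cap \mathcal{T}$ and $x$ lies strictly upstream of $p$ along $R_x$, i.e.\ $x = p - t\,w(p)$ with $w := -\nabla u_0$ and $0 < t = |x-p| < \diam(\Omega)$. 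Hence
\[ A \subseteq \mathcal{C} := \bigl\{\, p - t\,w(p) : p \in M \cap \mathcal{T},\ 0 < t < \diam(\Omega) \,\bigr\}. \]
I then claim $\lebesgue(\mathcal{C}) = 0$, which contradicts $\mu(A) > 0$ and $\mu \ll \lebesgue$, and finishes the proof. To prove the claim: since $M$ is a $C^1$ submanifold of dimension $k := \dim M \le d-2$, write $M = \bigcup_i \psi_i(E_i)$ with $E_i \subseteq \RR^k$ and $\psi_i$ Lipschitz; and by the structure theory of transport rays the direction field $w$ is countably Lipschitz on $\mathcal{T}$, i.e.\ $\mathcal{T} = \bigcup_n \mathcal{T}_n$ with $w|_{\mathcal{T}_n}$ Lipschitz. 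On each piece $\psi_i(E_i) \cap \mathcal{T}_n$ the map $(\kappa,t) \mapsto \psi_i(\kappa) - t\,w(\psi_i(\kappa))$ is Lipschitz from a subset of $\RR^{k+1}$ into $\RR^d$; as $k+1 \le d-1 < d$, its image is $\lebesgue$‑null, and $\mathcal{C}$ is a countable union of such images.

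The geometric heart is the inclusion $A \subseteq \mathcal{C}$ together with $\lebesgue(\mathcal{C}) = 0$: it is precisely the low codimension of $M$ that makes the cylinder over $M$ in the ray directions Lebesgue‑null, which is what rules out an optimal map that stops short of a ray's endpoint. I expect the main technical obstacle to be the regularity input needed to control $\mathcal{C}$ — that the transport‑ray direction $-\nabla u_0$ is countably Lipschitz on the transport set — which I would either invoke from the transport‑ray structure theory or re‑establish in the decomposed form used above, followed by the routine but delicate book‑keeping of the various $\mu$‑ and $\lebesgue$‑null sets. I note that this route avoids disintegrating $\mu$ or $\nu$ along the rays altogether.
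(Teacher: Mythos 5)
Your proposal is correct and follows essentially the same route as the paper: both arguments hinge on the Caffarelli--Feldman--McCann countably-Lipschitz regularity of $\nabla u_0$ away from ray endpoints, and both rule out transport stopping short of a ray's lower endpoint by covering the offending set with countably many Lipschitz images of $(\dim M + 1)$-dimensional parameter domains, which are $\lebesgue$-null since $\dim M + 1 < d$. The only cosmetic difference is framing: the paper shows directly that the set of $\spt(\nu)$-points interior to transport rays is $\nu$-null and then identifies $\alpha(x)=|x-T_0(x)|$, whereas you argue by contradiction on $\{T_0(x)\neq b(x)\}$ via the cylinder $\mathcal{C}$ --- the same estimate in a different order.
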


In terms of applications, this result motivates an algorithm for approximating an optimal transport map $T_0$ for $W_1(\mu,\nu)$. A naive approach would be to compute $\alpha(x)$ and use \eqref{eq:OT_map_from_potential_only} to obtain $T_0(x)$, however this requires exact knowledge of a Kantorovich potential $u_0$ and the step size $\alpha$. Since in practice there is error in both $u_0$ and $\alpha$, we found it more effective to use a constant step size $\eta$ obtained by averaging $\alpha(x)$ with respect to $\mu$. In other words, we modify the distribution $\mu$ by pushing it forward under the map
\begin{equation*}
    T(x) = x- \eta \nabla u_0(x).
\end{equation*}
Our averaging argument dictates that $\eta$ should be given by the average value of $\alpha(x)$, which is simply $W_1(\mu,\nu)$ by \eqref{eq:OT_map_from_potential_only}, \eqref{eq:normalized_direction_ofT}, and \eqref{eq:wasserstein_1_def}. 
Fortuitously, the value of $W_1(\mu,\nu)$ is available to us at no extra computational cost as a by-product of computing $u_0$ via \eqref{eq:dual_formula_for_W1}. Additionally, we have a simple  theoretical result (\Cref{thm:guaranteed_reduction_if_lessthanmedian}) concerning gradient descent on a Kantorovich potential with a spatially uniform step size, giving a general condition under which it yields a decrease in the Wasserstein 1 distance. 

Naturally, the use of a spatially uniform step size introduces some new error in approximating $T_0$, as we may overshoot or undershoot our targets depending on the size of $|x-T_0(x)|$ relative to $\eta$.  To ameliorate this, we iterate this procedure, optionally learning a new Kantorovich potential at each stage to correct for past errors.

Since our implementation uses the method from \cite{gulrajani2017improved} to learn $u_0$, where approximate Kantorovich potentials are called ``critics'', we dub the iterative transport procedure that results from this step size selection method Trust the Critics (TTC)\footnote{Note that the TTC algorithm is a simplified and improved version of that of \cite{milne2021trust}}.

The main contributions of this paper are as follows:
\begin{enumerate}
    \item We obtain a novel theoretical result (\Cref{thm:map_from_potential}) showing that under reasonable assumptions on $\mu$ and $\nu$ an optimal transport map can be derived solely from a Wasserstein 1 Kantorovich potential. 
    As a corollary we obtain a new uniqueness result for optimal transport maps in this setting which generalizes a theorem of \cite{hartmann2020semi}
    \item Motivated by our theory, we devise a novel approximate transport algorithm (TTC). In addition to the connection with \Cref{thm:map_from_potential}, this approach is justified by \Cref{thm:guaranteed_reduction_if_lessthanmedian}, which provides an estimate on the reduction in the Wasserstein 1 distance obtained by a gradient descent step with a constant step size on a Kantorovich potential.
    \item We show that TTC works well in practice as a multipurpose algorithm by applying it to several high dimensional imaging problems (denoising, generation, translation, and deblurring), which typically require specialized approaches.
\end{enumerate}

The rest of this paper is structured as follows. In \Cref{sec:related} we discuss related work. In \Cref{sec:theoreticalresults} we provide necessary background on Wasserstein 1 optimal transport and sketch the proof of \Cref{thm:map_from_potential}. We also provide a statement of \Cref{thm:guaranteed_reduction_if_lessthanmedian} and a sketch of the proof; details of all proofs are deferred to \Cref{app:proofs}. In \Cref{sec:practicalalgorithms} we explain our implementation of TTC.  \Cref{sec:experiments} details our experiments, which include applications of TTC to a variety of high dimensional imaging problems; additional experiments and hyperparameter settings are given in \Cref{sec:exp_appendix}. Finally, we summarize the paper in \Cref{sec:conclusion}.







\section{Related work}
\label{sec:related}
To our knowledge, \Cref{thm:map_from_potential} is the first approach for computing an optimal transport map from a Wasserstein 1 Kantorovich potential alone. In classical results on the construction of a Wasserstein 1 optimal transport map (e.g. \cite{evans1999differential}, \cite{caffarelli2002constructing}, \cite{ambrosio2003existence}) the potential $u_0$ plays a key role, but additional information on the measures $\mu$ and $\nu$ is needed for the construction. Suppose that, for instance, $d=1$ and the support of $\nu$ is strictly to the right of the support of $\mu$. It is not difficult to show that the function $u_0(x) = -x$ is a Kantorovich potential for any such pair $(\mu,\nu)$, and consequently the potential alone does not suffice to produce a map. In this setting the cumulative distribution functions of $\mu$ and $\nu$ can be used to compute an optimal map, but such information is typically not available in many applications. Our geometric assumptions on $\mu$ and $\nu$ (see \Cref{thm:map_from_potential}) eliminate this issue.

The optimal transport map for the Wasserstein 2 distance can be obtained from a corresponding Kantorovich potential via a well known and simple formula (see, e.g., Theorem 1.17 of \cite{santambrogio2015optimal}). This formula is leveraged in several works, (e.g. \cite{lei2017geometric}, \cite{ makkuva2019optimal} \cite{korotin2020wasserstein}), however in this case the determination of the Kantorovich potentials requires the computation of a Legendre transform, which remains a challenge in high dimensions \cite{jacobs2020fast}. Our \Cref{thm:map_from_potential} provides an analogous result for the Wasserstein 1 distance, which benefits from having a dual problem (i.e. \eqref{eq:dual_formula_for_W1}) which is considerably simpler. Furthermore, the Wasserstein 1 distance has other advantages. For example, \cite{hartmann2020semi} points out that $W_1(\mu,\nu)$ behaves particularly well under affine transformations of $\mu$ and $\nu$ on the space of measures, which is relevant in imaging applications where these correspond to brightness or contrast adjustments.

There are relatively few methods for finding Wasserstein 1 optimal transport maps when $d>1$ if $\mu$ and $\nu$ are not both discrete measures, in contrast to the case of the Wasserstein 2 distance (e.g. \cite{benamou2000computational}, \cite{ angenent2003minimizing}). To our knowledge, the sole exception is \cite{hartmann2020semi}, which assumes that $\nu$ is discrete and does not include applications in dimensions higher than $2$. Thus, given the use of techniques such as that of \cite{gulrajani2017improved} for approximating Kantorovich potentials for large scale problems, we view \Cref{thm:map_from_potential} as a significant step for computing Wasserstein 1 optimal transport maps in high dimensions.

Since we often lack knowledge of an exact Kantorovich potential, we introduce TTC as an effective but approximate transport algorithm. In the context of specific applications, it is related to existing works; we will discuss examples of this for image generation and denoising. When $\mu$ is a noise distribution and $\nu$ governs a set of real data, TTC can be viewed as an image generation algorithm which extends the method of \cite{nitanda2018gradient}. In that paper the authors showed that fine-tuning of Wasserstein GANs can be accomplished by modifying generated data with gradient descent steps of constant step size on learned critics. In the generative context, the novel contribution of TTC is its  adaptive step size motivated by optimal transport theory; this yields much faster convergence of distributions which are not initially close. To make this work, we need a close approximation of the Wasserstein 1 distance. Motivated by \cite{milne2021wasserstein}, we obtain this approximation by using a much larger value of the regularization parameter $\lambda$ from WGAN-GP than is typical (see the discussion in \Cref{sec:practicalalgorithms}). 

If $\mu$ and $\nu$ are distributions of noisy and clean images, then TTC can be viewed as a denoising algorithm. In this context a single step of TTC is equivalent to the denoising method from \cite{lunz2018adversarial} with a particular regularization parameter (see \Cref{prop:advregis1ttc}). We demonstrate experimentally that improved performance is obtained by iterating this approach via TTC; note that an alternative iterative procedure is given in \cite{mukherjee2021end}.   

\section{Theoretical results}
\label{sec:theoreticalresults}
In this section we will sketch the proof of \Cref{thm:map_from_potential}, as well as state and sketch the proof of \Cref{thm:guaranteed_reduction_if_lessthanmedian}, which provides additional justification for TTC. Full details for the proofs can be found in \Cref{app:proofs}.

We begin with the proof of \Cref{thm:map_from_potential}, which requires some background from Wasserstein 1 optimal transport.
\subsection{Background on Wasserstein 1 optimal transport}
\label{sec:background_on_W1}
Central to our analysis is the concept of transport rays, a term coined in \cite{evans1999differential}, which refers to maximal segments over which the Lipschitz inequality of a $1$-Lipschitz function is saturated. Several definitions have been used in different works; ours is based on Definition 3.7 of \cite{santambrogio2015optimal}.
\begin{definition}
\label{def:transportray}
Let $u \in \onelip$. For $x, y \in \Omega$ the segment $[x,y] := \{ (1-t)x + ty \mid t \in [0,1]\}$ is called a transport ray of $u$ if $x \neq y$, $u(x) - u(y) = |x-y|$, and $[x,y]$ is not properly contained in any other segment $[z,w]$ satisfying these conditions. The open segment $]x,y[ := \{ (1-t)x + ty \mid t \in (0,1)\}$ is called the interior of the transport ray, and $x$ and $y$ are called its upper and lower endpoints, respectively.
\end{definition}
Transport rays play a key role in the Wasserstein 1 optimal transport problem since if $u_0$ is a Kantorovich potential for $W_1(\mu,\nu)$, its transport rays specify where optimal mass transport can occur. Specifically, we have that $\mu$ almost surely, $x\neq T_0(x)$ implies that $[x, T_0(x)]$ is contained in a transport ray. This is the content of the following well known result.
\begin{lemma}
\label{lem:ae_in_transport_rays}
Let $\mu \ll \mathcal{L}_d$. Suppose that $u_0$ and $T_0$ are a Kantorovich potential and optimal transport map for $W_1(\mu,\nu)$, respectively. Then $\mu$ almost everywhere,\label{lem:spt_T0_in_transport_ray}
\begin{equation*}
    u_0(x) - u_0(T_0(x)) = |x-T_0(x)|.
\end{equation*}
\end{lemma}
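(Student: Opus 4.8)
The plan is to exploit the tightness of both the primal (Monge) and dual (Kantorovich) optimal transport problems at once. First I would record two equalities. Since $T_0$ is an optimal transport map for $W_1(\mu,\nu)$, it attains the infimum in \eqref{eq:wasserstein_1_def}, so $\int_\Omega |x-T_0(x)|\,d\mu(x) = W_1(\mu,\nu)$ --- here the hypothesis $\mu \ll \lebesgue$ is what guarantees that an optimal map exists and that the Monge value equals the Kantorovich value $W_1(\mu,\nu)$. Since $u_0$ is a Kantorovich potential, it attains the supremum in \eqref{eq:dual_formula_for_W1}, so $W_1(\mu,\nu) = \int_\Omega u_0\,d\mu - \int_\Omega u_0\,d\nu$. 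Using $T_{0\#}\mu = \nu$ together with the change-of-variables formula for pushforwards (legitimate because $u_0$ is continuous, hence bounded and $\nu$-integrable on the compact set $\Omega$), the last term rewrites as $\int_\Omega u_0(T_0(x))\,d\mu(x)$.

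Chaining these identities yields
\begin{equation*}
\int_\Omega |x-T_0(x)|\,d\mu(x) = \int_\Omega \big( u_0(x) - u_0(T_0(x)) \big)\,d\mu(x).
\end{equation*}
I would then invoke $u_0 \in \onelip$, which gives the pointwise bound $u_0(x) - u_0(T_0(x)) \leq |x-T_0(x)|$ for every $x \in \Omega$. Consequently the nonnegative measurable function $x \mapsto |x-T_0(x)| - \big(u_0(x) - u_0(T_0(x))\big)$ has vanishing integral against $\mu$, hence is zero $\mu$-almost everywhere, which is precisely the asserted identity.

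There is no genuine obstacle here: this is a standard ``equality forced inside a chain of inequalities'' argument. The only points requiring care are routine: that $x \mapsto u_0(T_0(x))$ is Borel measurable (a continuous function composed with a measurable map) and bounded on the compact $\Omega$, so all integrals are finite and the pushforward change of variables applies; and the elementary fact that a $\mu$-measurable function $f \geq 0$ with $\int f\,d\mu = 0$ vanishes $\mu$-a.e. It is worth stating explicitly that the role of $\mu \ll \lebesgue$ is confined to ensuring the existence of $T_0$ and the coincidence of the Monge and Kantorovich optimal values; the displayed equality itself is valid for any simultaneously optimal pair $(u_0, T_0)$.
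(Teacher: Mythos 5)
Your argument is correct, and it reaches the conclusion by a genuinely different (more self-contained) route than the paper. The paper's proof defines $C = \{x \in \Omega \mid u_0(x) - u_0(T_0(x)) = |x-T_0(x)|\}$ and deduces $\mu(\Omega\setminus C)=0$ by citing the standard fact that the support of the optimal plan $(I,T_0)_\#\mu$ is contained in $\{(x,y)\in\Omega^2 \mid u_0(x)-u_0(y)=|x-y|\}$ (complementary slackness for the Kantorovich--Rubinstein duality, as in the discussion after equation (3.2) of Santambrogio), and then pulling this back through the graph map. You instead prove the statement directly from first principles: optimality of $T_0$ in the Monge problem gives $\int_\Omega|x-T_0(x)|\,d\mu = W_1(\mu,\nu)$, optimality of $u_0$ in the dual plus the pushforward change of variables gives $W_1(\mu,\nu)=\int_\Omega\bigl(u_0(x)-u_0(T_0(x))\bigr)\,d\mu$, and the $1$-Lipschitz bound forces the nonnegative integrand $|x-T_0(x)|-\bigl(u_0(x)-u_0(T_0(x))\bigr)$ to vanish $\mu$-a.e. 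This is exactly the argument that underlies the support-containment result the paper cites, so in effect you have reproved that ingredient in the special case of a plan induced by a map; what your version buys is self-containment and no need to discuss supports of plans or Borel pre-images of graph sets, while the paper's version buys brevity by delegating the duality bookkeeping to a cited result. Your measurability remarks ($u_0\circ T_0$ Borel and bounded on the compact $\Omega$) and your observation that $\mu\ll\lebesgue$ is only used for the existence of $T_0$ (and is not needed for the displayed identity once an optimal pair is given) are accurate and consistent with the paper.
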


The existence of transport rays imposes additional structure on a  Kantorovich potential $u_0$. The following lemma states that $u_0$ is affine on transport rays and differentiable on their interiors, with gradient parallel to the ray. Note that this result, together with \Cref{lem:spt_T0_in_transport_ray}, implies the claim made in \Cref{sec:introduction} that $-\nabla u_0$ points in the direction of optimal mass transport.

\begin{lemma}[Essentially Lemmas 3.5 and 3.6 from \cite{santambrogio2015optimal}]
\label{lem:affineanddifferentiableoninterior}
If $[x,y]$ is a transport ray of $u$ then for all $t \in [0,1]$,
\begin{equation}
u((1-t)x + ty) = (1-t) u(x) + t u(y). \label{eq:uisaffineonrays}
\end{equation}
Further, $u$ is differentiable for all  $z \in ]x,y[$, with derivative satisfying
\begin{equation}
\nabla u(z) = \frac{x-y}{|x-y|}. \label{eq:graduisraydirection}
\end{equation}
\end{lemma}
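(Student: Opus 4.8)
The plan is to establish the two claims separately, in both cases using only the $1$-Lipschitz bound for $u$; note that maximality of the transport ray plays no role here, only that $x \neq y$ and $u(x) - u(y) = |x-y|$.

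For \eqref{eq:uisaffineonrays}, I would fix $t \in [0,1]$, put $z = (1-t)x + ty$, and record the elementary identities $|x-z| = t|x-y|$ and $|z-y| = (1-t)|x-y|$, so that $|x-z| + |z-y| = |x-y|$. Since $u \in \onelip$ we have $u(x) - u(z) \le |x-z|$ and $u(z) - u(y) \le |z-y|$; summing and comparing with $u(x) - u(y) = |x-y|$ forces equality in both. Then $u(x) - u(z) = t|x-y| = t\bigl(u(x) - u(y)\bigr)$, which rearranges to \eqref{eq:uisaffineonrays}. The same argument also yields the two facts I will reuse below: $u(x) = u(z) + |x-z|$ and $u(y) = u(z) - |z-y|$ for every $z \in [x,y]$.

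For differentiability on $]x,y[$, fix an interior point $z$, set $e := (x-y)/|x-y|$ and $d_+ := |x-z|$, $d_- := |z-y|$; the point of taking $z$ interior is exactly that $d_\pm > 0$. Then $x = z + d_+ e$ and $y = z - d_- e$. For $v$ small (taking, if necessary, a $1$-Lipschitz extension of $u$ to $\RR^d$), the Lipschitz bounds from the two endpoints give $u(z+v) \ge u(x) - |d_+ e - v|$ and $u(z+v) \le u(y) + |d_- e + v|$. Expanding the Euclidean norms via $\sqrt{1+s} \le 1 + s/2$ gives $|d_+ e - v| \le d_+ - \langle e, v\rangle + |v|^2/(2d_+)$ and $|d_- e + v| \le d_- + \langle e, v\rangle + |v|^2/(2d_-)$, hence
\begin{equation*}
\langle e, v\rangle - \frac{|v|^2}{2d_+} \;\le\; u(z+v) - u(z) \;\le\; \langle e, v\rangle + \frac{|v|^2}{2d_-}.
\end{equation*}
Thus $|u(z+v) - u(z) - \langle e, v\rangle| \le |v|^2/\bigl(2\min(d_+,d_-)\bigr) = o(|v|)$, so $u$ is differentiable at $z$ with $\nabla u(z) = e = (x-y)/|x-y|$, which is \eqref{eq:graduisraydirection}.

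I do not anticipate a genuine obstacle: the computation is elementary. The only mildly delicate points are keeping the signs straight in the two norm expansions and observing that the estimate genuinely requires $d_\pm > 0$, i.e. $z$ in the \emph{open} segment --- at an endpoint one gets only a one-sided bound and $u$ need not be differentiable. This is in essence the argument of Lemmas 3.5--3.6 of \cite{santambrogio2015optimal}; a slightly more conceptual rephrasing is that the two endpoint bounds sandwich $u$ near $z$ between functions tangent at the origin to the affine map $v \mapsto u(z) + \langle e, v\rangle$.
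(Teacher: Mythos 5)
Your argument is correct, and the paper itself offers no proof of this lemma --- it simply defers to Lemmas 3.5 and 3.6 of \cite{santambrogio2015optimal}, whose standard proof is essentially what you have reconstructed (saturation of the Lipschitz bound on subsegments, then sandwiching $u(z+v)-u(z)$ between $\langle e,v\rangle \pm O(|v|^2)$ using the two endpoints, which is exactly where interiority of $z$ is needed). No gap to report.
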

A consequence of \Cref{lem:affineanddifferentiableoninterior} is that two transport rays can only intersect at a point which is an endpoint of both. It is also easy to prove that that point must be an upper or lower endpoint for both rays.
\begin{lemma}
\label{lem:two_rays_only_cross_at_endpoints}
    If two distinct transport rays $[x,y]$ and $[x',y']$ of a function $u$ intersect at a point $w$, then either $w = x = x'$ or $w = y = y'$.
\end{lemma}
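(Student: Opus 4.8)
I would argue by contradiction. Suppose $[x,y]$ and $[x',y']$ are distinct transport rays of $u$ meeting at $w$, and the conclusion fails. Since every point of a segment is either an interior point or one of its two endpoints, and since ``$w=x=x'$ or $w=y=y'$'' amounts to ``$w$ is the upper endpoint of both rays, or the lower endpoint of both rays,'' the failure of the conclusion leaves exactly two possibilities: (i) $w$ is an interior point of at least one of the two rays; or (ii) $w$ is an endpoint of each ray, but the upper endpoint of one and the lower endpoint of the other. The plan is to treat both uniformly and to produce, in each case, a segment strictly larger than one of the given rays on which the Lipschitz inequality of $u$ is saturated, contradicting the maximality required by \Cref{def:transportray}.

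The main tool is a local two-point estimate. Write $e=(x-y)/|x-y|$ and $e'=(x'-y')/|x'-y'|$ for the lower-to-upper directions of the two rays. For small $\epsilon>0$ I would take $p=w+\epsilon v$ on $[x,y]$ and $q=w+\epsilon v'$ on $[x',y']$, where $v\in\{e,-e\}$ and $v'\in\{e',-e'\}$ are directions in which one may move away from $w$ while staying on the respective ray --- both signs are available when $w$ is interior to that ray, and exactly one sign is forced when $w$ is an endpoint of it. By \Cref{lem:affineanddifferentiableoninterior} ($u$ is affine on a ray with unit slope in the ray's direction) one gets $u(p)-u(w)=\epsilon\,(e\cdot v)=\pm\epsilon$ and $u(q)-u(w)=\pm\epsilon$. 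A short check of cases (i) and (ii) shows one can always pick the available signs of $v$ and $v'$ so that $u(p)-u(w)$ and $u(q)-u(w)$ have opposite signs; then $|u(p)-u(q)|=2\epsilon$, while $|p-q|=\epsilon|v-v'|\le 2\epsilon$. The $1$-Lipschitz bound $|u(p)-u(q)|\le|p-q|$ then forces $|v-v'|=2$, i.e. $v'=-v$; tracing back the forced sign choices in each configuration, this gives $e=e'$, so the two rays are collinear and co-oriented.

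Given $e=e'$, I would finish with a bookkeeping step. Parametrize the common line $L$ through $w$ by signed arclength $s$ in the direction $e$, with $w$ at $s=s_w$. On each ray $u$ is affine with slope $1$ in $s$ and equals $u(w)$ at $s_w$, so $u(p(s))=u(w)+(s-s_w)$ holds on the $s$-interval of $[x,y]$ and on that of $[x',y']$; both intervals contain $s_w$, so their union is a single interval $[s_-,s_+]$, and on the corresponding segment $[\tilde{x},\tilde{y}]\subset L$ one still has $u(\tilde{x})-u(\tilde{y})=s_+-s_-=|\tilde{x}-\tilde{y}|$. Thus $[\tilde{x},\tilde{y}]$ saturates the Lipschitz inequality and contains $[x,y]$; if the containment is proper this contradicts the maximality of $[x,y]$, while if $[\tilde{x},\tilde{y}]=[x,y]$ then $[x',y']\subseteq[x,y]$, contradicting either the maximality of $[x',y']$ or, if they coincide, the distinctness of the two rays.

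The step I expect to be the main obstacle is the case analysis in the second paragraph: one must check carefully that in every failure configuration the signs available for $v$ and $v'$ (some forced, some free) can genuinely be arranged to produce cancellation, and that the resulting identity $v'=-v$ really yields $e=e'$ with the correct orientation rather than mere collinearity. A slicker alternative for the sub-cases where $w$ lies interior to a ray is to invoke the differentiability part of \Cref{lem:affineanddifferentiableoninterior} directly, forcing $\nabla u(w)$ to equal both $e$ and $e'$; but the two-point estimate covers all configurations at once, which I find cleaner to write.
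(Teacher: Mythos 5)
Your proposal is correct, and it takes a somewhat different route from the paper's. The paper splits into the same two configurations but handles them with different tools: when $w$ is interior to one ray it invokes differentiability (\Cref{lem:affineanddifferentiableoninterior}, supplemented by \Cref{lem:graduisraydirectionextends} at endpoints) to equate $\nabla u(w)$ with both ray directions and then declares a contradiction with distinctness, and when $w$ is the upper endpoint of one ray and the lower endpoint of the other it runs the triangle-inequality computation $u(x')-u(y)=|x'-y'|+|x-y|\geq|x'-y|$, forcing colinearity and a strictly larger saturating segment that violates maximality in \Cref{def:transportray}. Your two-point estimate replaces the gradient argument entirely: it needs only the affinity statement \eqref{eq:uisaffineonrays} and the $1$-Lipschitz bound, treats the interior and mixed-endpoint configurations uniformly, and — importantly — your final collinearity bookkeeping makes explicit a step the paper leaves terse: equality of the two ray directions is not by itself a contradiction with distinctness (two distinct collinear co-oriented segments through $w$ could a priori have the same direction), and one really does need the union-of-intervals saturation argument you give to contradict maximality or distinctness. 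The trade-off is that your sign case-check is slightly fussier to write than the paper's one-line gradient identity, while the paper's second case is essentially your bookkeeping step in the special configuration $w=x=y'$; overall your argument is marginally more self-contained (it never uses \Cref{lem:graduisraydirectionextends}) and closes the collinear loophole cleanly.
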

Put another way, this lemma tells us that once a given transport ray collides with another, neither ray can continue. This basic notion forms a key part of the proof of \Cref{thm:map_from_potential}.

The second key notion is that away from the endpoints of transport rays, the function $x \mapsto \nabla u_0(x)$ is Lipschitz continuous. To state this result we must quantify the distance to the endpoints of a transport ray. We have already defined the distance to the lower endpoint with the function $\alpha$; the distance to the upper endpoint, which we denote by $\beta$, is defined  in the following proposition. The Lipschitz property of $\nabla u_0$ away from the ray endpoints was first proven in \cite{caffarelli2002constructing}, but there it is stated with sufficiently specialized notation that it may be helpful to provide a restatement here; see \Cref{app:proofs} for a proof. 

\begin{proposition}
\label{prop:Lipschitzgradient}
Let $u_0 \in \onelip$. Define $\alpha: \Omega \rightarrow \RR$ as in \Cref{thm:map_from_potential}, and $\beta:\Omega \rightarrow \RR$ as
\begin{equation*}
     \beta(x) = \sup\{ |x-z| \mid z\in \Omega, u_0(z) - u_0(x) = |x-z|\}.
\end{equation*}
For $j \in \mathbb{N}$, set
\begin{equation}
A_j = \{ z \in \Omega \mid \min(\alpha(z), \beta(z)) > 1/j\}.\label{eq:Ajdef}
\end{equation}
Then $z \mapsto \nabla u_0(z)$ is Lipschitz on $A_j$ with constant $4j$.
\end{proposition}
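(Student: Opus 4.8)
The plan is to reduce the statement to a pointwise estimate: for any $x, y \in A_j$, the unit vectors $p := \nabla u_0(x)$ and $q := \nabla u_0(y)$ satisfy $|p - q| \le j\,|x - y|$, which in particular gives the asserted bound with constant $4j$. The first task is to make sense of $p$ and $q$ and to pin down the transport ray through a point $z \in A_j$. Since $\alpha(z) > 0$ and $\beta(z) > 0$, there exist $z^- \ne z$ and $z^+ \ne z$ with $u_0(z) - u_0(z^-) = |z - z^-|$ and $u_0(z^+) - u_0(z) = |z - z^+|$; comparing $u_0(z^+) - u_0(z^-)$ with $|z^+ - z^-|$ via the triangle and Lipschitz inequalities forces equality in the triangle inequality, so $z^-, z, z^+$ are collinear with $z$ strictly between them. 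Hence $z$ lies in the interior of a transport ray, and \Cref{lem:affineanddifferentiableoninterior} makes $\nabla u_0(z)$ well-defined and equal to the unit ray direction (by \eqref{eq:graduisraydirection}). Using compactness of $\Omega$ and continuity of $u_0$, the suprema defining $\alpha(z)$ and $\beta(z)$ are attained; the same collinearity argument shows the maximizers lie along $\mp\nabla u_0(z)$, and \Cref{lem:two_rays_only_cross_at_endpoints} prevents them from lying strictly beyond the maximal ray. Thus the transport ray through $z$ is exactly $[\,z + \beta(z)\nabla u_0(z),\ z - \alpha(z)\nabla u_0(z)\,]$, with lower endpoint $z - \alpha(z)\nabla u_0(z)$.

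With this structure in place, fix $x, y \in A_j$, write $p = \nabla u_0(x)$, $q = \nabla u_0(y)$, and set $\delta = 1/j$. Because $\min(\alpha(x),\beta(x)) > \delta$ and $\min(\alpha(y),\beta(y)) > \delta$, the four points $x \pm \delta p$ and $y \pm \delta q$ lie on the respective transport rays, hence in $\Omega$; and since $u_0$ is affine along rays with unit slope, \eqref{eq:uisaffineonrays} gives $u_0(x \pm \delta p) = u_0(x) \pm \delta$ and $u_0(y \pm \delta q) = u_0(y) \pm \delta$. Applying the $1$-Lipschitz property of $u_0$ to the pairs $(x + \delta p,\ y - \delta q)$ and $(y + \delta q,\ x - \delta p)$ yields
\[
  u_0(x) - u_0(y) + 2\delta \le |(x - y) + \delta(p + q)|, \qquad
  u_0(y) - u_0(x) + 2\delta \le |(x - y) - \delta(p + q)|.
\]
Adding these two inequalities and using $|u + v| + |u - v| \le 2\sqrt{|u|^2 + |v|^2}$ (from the parallelogram law) with $u = x - y$ and $v = \delta(p + q)$ gives $4\delta^2 \le |x - y|^2 + \delta^2 |p + q|^2$, i.e. $\delta^2\bigl(4 - |p + q|^2\bigr) \le |x - y|^2$. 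As $p$ and $q$ are unit vectors, $|p + q|^2 + |p - q|^2 = 4$, so $\delta^2 |p - q|^2 \le |x - y|^2$, and therefore $|p - q| \le |x - y|/\delta = j\,|x - y| \le 4j\,|x - y|$.

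I expect the short computation of the second paragraph to be routine; the main obstacle is the geometric setup of the first paragraph, namely establishing that every point of $A_j$ lies in the interior of a transport ray whose two endpoints are realized precisely at distances $\alpha$ and $\beta$ and in the direction of $\pm\nabla u_0$. This is where \Cref{lem:affineanddifferentiableoninterior}, \Cref{lem:two_rays_only_cross_at_endpoints} and the compactness of $\Omega$ are genuinely needed, and where care is required to ensure the affine formula for $u_0$ can legitimately be applied at the shifted points $x \pm \delta p$ and $y \pm \delta q$. After that, the only auxiliary fact used is the elementary inequality $|u + v| + |u - v| \le 2\sqrt{|u|^2 + |v|^2}$.
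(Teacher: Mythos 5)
Your proof is correct, but it follows a genuinely different route from the paper's. The paper argues by a case split: for $|z-z'|\ge \tfrac{1}{2j}$ it uses the trivial bound $|\nabla u_0(z)-\nabla u_0(z')|\le 2$, and for nearby points it introduces the auxiliary point $w' = z' + (u_0(z)-u_0(z'))\nabla u_0(z')$ on the same ray as $z'$ and the same level set of $u_0$ as $z$, then invokes Lemma 16 of Caffarelli--Feldman--McCann (a nontrivial external estimate on the variation of $\nabla u_0$ along level sets between interior ray points) to get a factor $2j$, which combines with the correction $|w'-z'|\le |z-z'|$ to give $4j$. You instead prove everything from scratch: after the same geometric setup (each point of $A_j$ is an interior ray point, $\nabla u_0$ is the unit ray direction, and the shifted points $x\pm\delta p$, $y\pm\delta q$ with $\delta=1/j$ lie on the rays, hence in the convex domain $\Omega$, with $u_0$ affine of unit slope there), you apply the $1$-Lipschitz bound to the two ``crossed'' pairs, add, and use the parallelogram identity $|p+q|^2+|p-q|^2=4$ for unit vectors. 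This four-point argument is self-contained (no citation of the level-set lemma), needs no case split, and in fact yields the sharper constant $j$ in place of $4j$, so the stated conclusion follows a fortiori. Your preparatory paragraph is also sound: attainment of the suprema by compactness, collinearity of $z^-$, $z$, $z^+$ forced by equality in the triangle inequality, differentiability and the ray-direction formula from \Cref{lem:affineanddifferentiableoninterior}, and \Cref{lem:two_rays_only_cross_at_endpoints} to identify $\alpha$, $\beta$ with the distances to the endpoints of the (unique) maximal ray — though for the computation you only need that the points at distance $\delta<\min(\alpha,\beta)$ along $\pm\nabla u_0$ lie on saturated segments, which your collinearity argument already gives directly.
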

\subsection{Proof sketch for \Cref{thm:map_from_potential}}
\label{subsec:proofsketch_of_theorem1}
Using \Cref{prop:Lipschitzgradient} and \Cref{lem:two_rays_only_cross_at_endpoints}, we may sketch the proof of \Cref{thm:map_from_potential}. Intuitively, the proof holds because our assumptions on $\mu$ and $\nu$ force the transport rays of $u_0$ to focus on $\spt(\nu)$. Necessarily, this means that they collide with one another on $\spt(\nu)$ and therefore must end, and thus the transport distance $|x-T_0(x)|$ is precisely equal to $\alpha(x)$.
\begin{proof}[\Cref{thm:map_from_potential} proof sketch]
Let $T_0$ be an optimal transport map for $W_1(\mu,\nu)$, which exists by \cite{ambrosio2003existence}. We begin by proving that a point $x$ sampled from $\mu$ is within a transport ray of $u_0$ ending at $T_0(x)$ with probability one. Such a result will guarantee that $\mu$ almost surely, 
\begin{equation*}
    |x-T_0(x)| = \alpha(x),
\end{equation*}
whence \eqref{eq:OT_map_from_potential_only} will follow using \eqref{eq:normalized_direction_ofT}. To see that $x$ is in a transport ray of $u_0$ with $\mu$ probability $1$, we observe that $\mu(M) = 0$, and since $T_0(x) \in \spt(\nu)\subset M$ with $\mu$ probability $1$ we therefore have $x \neq T_0(x)$ $\mu$ almost everywhere. Thus \Cref{lem:spt_T0_in_transport_ray} implies that $x$ is in a transport ray of $u_0$ with $\mu$ probability $1$. Finally, to see that this transport ray ends at $T_0(x)$, we consider the set $A$ of $y \in \spt(\nu)$ that are not at the end of a transport ray, i.e.
\begin{equation}
    A = \{ y \in \spt(\nu) \mid \alpha(y) \beta(y) >0 \}.\label{def:Adef}
\end{equation}
We claim that $\nu(A) = 0$, which means that one of $\alpha(y)$ or $\beta(y)$ is zero $\nu$ almost surely; since almost all mass has to travel a non-zero distance to reach $\spt(\nu)$, $\beta(y) >0$ almost surely, and thus $\alpha(y) = 0$ almost surely, meaning that transport rays end with $\mu$ probability $1$ at $T_0(x)$.

To prove that $\nu(A) = 0$, we use the equation $\nu(A) = \mu(T_0^{-1}(A))$, and prove that $T_0^{-1}(A)$ is Lebesgue negligible and thus has $\mu$ measure $0$ since $\mu \ll \lebesgue$. By definition of $A$ and \Cref{prop:Lipschitzgradient}, we obtain that $T_0^{-1}(A)$ can be described with countably many Lipschitz coordinate systems of size $m+1$. Indeed, if $x\in T_0^{-1}(A)$, then $T_0(x) \in M$ and is in the interior of a unique transport ray. We can therefore write
\begin{equation}
    x = z + t \nabla u_0(z),\label{eq:lipschitz_parametrization}
\end{equation}
where $z = T_0(x) \in M$ and $t$ is a bounded parameter by compactness of $\Omega$. Since $z \in M$ we can specify it with $m$ Lipschitz coordinates, and by \Cref{prop:Lipschitzgradient} the pair $(z, t)$ is a Lipschitz parametrization of $x$ via \eqref{eq:lipschitz_parametrization}. Since $m+1 < d$, this shows that $\lebesgue(T^{-1}_0(A)) = 0$, as claimed. 

Our uniqueness result follows immediately from the representation formula \eqref{eq:OT_map_from_potential_only}, since we started with an arbitrary optimal transport map $T_0$.
\end{proof}
As we mentioned in \Cref{sec:introduction}, in applications we will replace the ideal step size $\alpha(x)$ with a uniform value $\eta$. This is because we typically do not know the Kantorovich potential $u_0$, and hence the ideal step size $\alpha$, precisely. The following simple result gives a condition under which such a gradient descent step decreases the Wasserstein 1 distance. This result is of general interest whenever gradient descent on a Kantorovich potential is used (e.g. see the description of the denoising method of \cite{lunz2018adversarial} given in \Cref{prop:advregis1ttc}). A more detailed version of this estimate is provided in \Cref{app:proofs}.
\begin{proposition}
\label{thm:guaranteed_reduction_if_lessthanmedian}
Let $\mu \ll \lebesgue$, and let $u_0$ and $T_0$ be a Kantorovich potential and optimal transport map for $W_1(\mu,\nu)$. Let $\tilde{\mu}$ be the pushforward of $\mu$ under one step of gradient descent on $u_0$, (i.e. $\tilde{\mu} = (I-\eta \nabla u_0)_\# \mu$ where $I$ is the identity map).  If $\eta >0$ and
\begin{equation*}
    \mu(\{ x\in \Omega \mid |x-T_0(x)| \geq \eta\}) > \frac{1}{2},
\end{equation*} then 
\begin{equation}
    W_1(\tilde{\mu},\nu) < W_1(\mu,\nu). \label{eq:guaranteed_reduction}
\end{equation}
\end{proposition}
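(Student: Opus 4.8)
The plan is to produce an explicit competitor coupling between $\tilde\mu$ and $\nu$ out of the two maps already at our disposal: the gradient–descent map $T_\eta := I - \eta\nabla u_0$ and the optimal map $T_0$. The map $T_\eta$ is defined $\mu$-a.e., since $u_0$ is $1$-Lipschitz and hence differentiable $\lebesgue$-a.e., and $\mu \ll \lebesgue$. Pushing $\mu$ forward under $x \mapsto (T_\eta(x), T_0(x))$ gives a coupling of $\tilde\mu = (T_\eta)_\#\mu$ with $\nu = (T_0)_\#\mu$, so that
\begin{equation*}
W_1(\tilde\mu,\nu)\le\int_\Omega |T_\eta(x)-T_0(x)|\,d\mu(x)=\int_\Omega \bigl| (x-T_0(x))-\eta\nabla u_0(x)\bigr|\,d\mu(x).
\end{equation*}

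Next I would evaluate the integrand pointwise. On $\{x\neq T_0(x)\}$, formula \eqref{eq:normalized_direction_ofT} (which holds $\mu$-a.e.\ on that set, via \Cref{lem:ae_in_transport_rays} and \Cref{lem:affineanddifferentiableoninterior}) gives $\nabla u_0(x)=(x-T_0(x))/|x-T_0(x)|$, so $x-T_0(x)$ and $\nabla u_0(x)$ are parallel and
\begin{equation*}
\bigl|(x-T_0(x))-\eta\nabla u_0(x)\bigr|=\bigl|\,|x-T_0(x)|-\eta\,\bigr|.
\end{equation*}
On the complementary set $\{x=T_0(x)\}$ one has $\bigl|(x-T_0(x))-\eta\nabla u_0(x)\bigr|=\eta|\nabla u_0(x)|\le\eta=\bigl|\,|x-T_0(x)|-\eta\,\bigr|$, using only $|\nabla u_0|\le 1$; so the bound $\bigl|(x-T_0(x))-\eta\nabla u_0(x)\bigr|\le\bigl|\,|x-T_0(x)|-\eta\,\bigr|$ holds $\mu$-a.e. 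Writing $g(x):=|x-T_0(x)|$ and recalling $W_1(\mu,\nu)=\int_\Omega g\,d\mu$, the claim reduces to showing $\int_\Omega|g-\eta|\,d\mu<\int_\Omega g\,d\mu$.

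The final step is a counting argument. Pointwise, $g-|g-\eta|=\eta$ on $\{g\ge\eta\}$, while $g-|g-\eta|=2g-\eta\ge-\eta$ on $\{g<\eta\}$ (since $g\ge0$); hence
\begin{equation*}
\int_\Omega\bigl(g-|g-\eta|\bigr)\,d\mu\ge\eta\,\mu(\{g\ge\eta\})-\eta\,\mu(\{g<\eta\})=\eta\bigl(2\mu(\{g\ge\eta\})-1\bigr)>0
\end{equation*}
by the hypotheses $\eta>0$ and $\mu(\{g\ge\eta\})>\tfrac12$. Chaining the inequalities yields $W_1(\tilde\mu,\nu)\le\int_\Omega|g-\eta|\,d\mu<\int_\Omega g\,d\mu=W_1(\mu,\nu)$, which is \eqref{eq:guaranteed_reduction}.

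The proof involves no deep estimate; the only thing to be careful about is the bookkeeping around where $\nabla u_0$ exists and the degenerate set $\{x=T_0(x)\}$ — one must check that $T_\eta$ is well defined $\mu$-a.e., that \eqref{eq:normalized_direction_ofT} applies $\mu$-a.e.\ on $\{x\neq T_0(x)\}$, and that on $\{x=T_0(x)\}$ control comes solely from the Lipschitz bound $|\nabla u_0|\le1$ rather than from any structural claim about transport rays. Once this is in place, the strict inequality survives because the gain term $\eta\bigl(2\mu(\{g\ge\eta\})-1\bigr)$ is strictly positive. A sharper version of this estimate, recording the magnitude of the decrease in terms of the distribution of $g$ and of $\eta$, is the more detailed statement deferred to \Cref{app:proofs}.
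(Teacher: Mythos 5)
Your proposal is correct and is essentially the paper's own argument: the same coupling $(I-\eta\nabla u_0, T_0)_\#\mu$ between $\tilde\mu$ and $\nu$, the same use of the ray-direction formula \eqref{eq:normalized_direction_ofT} plus $|\nabla u_0|\le 1$, and the same split over $\{|x-T_0(x)|\ge \eta\}$ versus its complement, yielding the identical bound $W_1(\tilde\mu,\nu)\le W_1(\mu,\nu)-\eta\bigl(2\mu(\{|x-T_0(x)|\ge\eta\})-1\bigr)$ as in \Cref{thm:detailedguaranteed_reduction_if_lessthanmedian}. The only cosmetic difference is that you use the exact pointwise value $\bigl|\,|x-T_0(x)|-\eta\,\bigr|$ on the undershooting set where the paper simply applies the triangle inequality, and the paper is slightly more explicit about replacing $I-\eta\nabla u_0$ by a Borel representative before taking the pushforward.
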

\begin{remark}
\label{remark:measurability}
Note that the definition of the pushforward  $(I-\eta \nabla u_0)_\# \mu$ requires some care since $\nabla u_0$ only exists almost everywhere. This measure is well defined when $\mu\ll\lebesgue$; see \Cref{app:proofs} for details. 
\end{remark}

\begin{proof}[Proof sketch]
The set $\{x \in \Omega \mid |x-T_0(x)| \geq \eta\}$ is precisely the set of points where overshooting does not occur after applying the map $I-\eta \nabla u_0$. All these points $x$ move closer to their target $T_0(x)$ by distance $\eta$. The remaining points overshoot their targets by no more than distance $\eta$. Using these observations to estimate the transport cost from $\tilde{\mu}$ to $\nu$ yields \eqref{eq:guaranteed_reduction}.
\end{proof}

\section{Practical algorithms}
\label{sec:practicalalgorithms}
In this section we will give detailed descriptions of our implementation of TTC. Set $\mu_0:= \mu$. For $n \in \{1,2, \ldots, N\}$, assume $\mu_{n-1}\ll \lebesgue$,  and define
\begin{equation}
\mu_{n} = (I-\eta_{n-1} \nabla u_{n-1})_\# \mu_{n-1}\label{eq:mundef}.
\end{equation}
Here $u_{n-1}$ is a critic approximating a Kantorovich potential for $W_1(\mu_{n-1},\nu)$, and $\eta_{n-1}$ is an approximation of $W_1(\mu_{n-1},\nu)$; since $\mu_{n-1} \ll \lebesgue$ by assumption, we have that $\mu_n$ is well defined by \eqref{eq:mundef} following \Cref{remark:measurability}. Let us note that we must assume $\mu_{n-1} \ll \lebesgue$ since \Cref{thm:guaranteed_reduction_if_lessthanmedian} includes no guarantee that the property of having a density with respect to Lebesgue measure is preserved by a gradient descent step on a Kantorovich potential. In fact, such a result is quite challenging to prove if overshooting occurs, since $I-\eta \nabla u_0$ need not be an invertible map in this case. We view this as an interesting avenue for future work, and will proceed under the assumption that such a result holds; we note that the validity of this assumption does not appear to be an issue in practice.

To approximate the Kantorovich potentials $(u_n)_{n=0}^{N-1}$ we train standard critic neural networks from the literature using the one sided gradient penalty from \cite{gulrajani2017improved} (see \eqref{eq:grad_pen_def}), which was found to provide more stable training than the two sided penalty in \cite{petzka2018regularization}. In order to use this technique it is necessary to be able to sample both $\mu_n$ and $\nu$.  For $\mu_n$, we draw an initial point $x_0$ from $\mu_0$, and apply gradient descent maps from the sequence of pre-trained critics. Precisely, a sample $x \sim \mu_n$ is obtained via the formula
\begin{equation}
x = (I-\eta_{n-1}\nabla u_{n-1}) \circ \ldots \circ (I-\eta_0 \nabla u_0)(x_0).\label{eq:generator_formula}
\end{equation}
Regarding the computation of the step size $\eta_{n-1}$, we note again that the value of $W_1(\mu_n,\nu)$ is available as a by-product of computing a Kantorovich potential, so this choice of adaptive step size requires no extra computation in practice. More precisely, we use the negative of the minimal value of the functional from WGAN-GP \cite{gulrajani2017improved}, that is
\begin{equation}
W_1(\mu_n,\nu) \approx \frac{1}{M}\sum_{j=1}^M u_n(x_j) - u_n(y_j) - \lambda G(\nabla u_n(\tilde{x}_j)), \label{eq:w1approx}
\end{equation}
where the $x_j$ and $y_j$ are samples from $\mu_n$ and $\nu$ respectively, $M$ is the mini-batch size, $\tilde{x}_j$ is a random convex combination of $x_j$ and $y_j$ as in \cite{gulrajani2017improved} and
\begin{equation}
    G(\nabla u_n(z)) = (|\nabla u_n(z)|-1)_+^2,\label{eq:grad_pen_def}
\end{equation}
for $(a)_+ = \max(0,a)$. When training WGANs, researchers will typically use $\lambda = 10$ for the gradient penalty coefficient (e.g. \cite{gulrajani2017improved}, \cite{lunz2018adversarial}, \cite{mukherjee2021end}), however we use a value of $\lambda = 1000$. In practice we found that this value of $\lambda$ stabilizes the estimates of $W_1(\mu_n,\nu)$ and the training of TTC; using smaller values of $\lambda$ leads to inflated estimates of $W_1(\mu_n,\nu)$, leading to overly large step sizes and unstable training. This is confirmed by the analysis in \cite{milne2021wasserstein}, which shows that at best the value in \eqref{eq:w1approx}, in expectation, converges to $W_1(\mu_n,\nu)$ like $O(\lambda^{-1})$. Depending on the mini-batch size $M$ the value of \eqref{eq:w1approx} can vary considerably across mini-batches, so we compute an average over $100$ mini-batches after training is completed. 

We found in practice that when $W_1(\mu,\nu)$ is large, a significant acceleration can be obtained by reusing the same critic for several steps. Consequently we pre-select a set of indices $J\subset \{ 1, 2, \ldots \} $ where we train $u_n$ only if $n \in J$. Whether we train $u_n$ or not, we warm start its parameters by initializing them at those of the preceding critic $u_{n-1}$ when $n\geq 1$. The method for training TTC is summarized in Algorithm \ref{alg:TTC}. 
\begin{algorithm}
\SetAlgoLined
\KwData{Samples from source $\mu$ and target $\nu$, untrained critics $(u_n)_{n=0}^{N-1}$ with parameters $(w_n)_{n=0}^{N-1}$, gradient penalty coefficient $\lambda$, number of critic training iterations $C$, batch size $M$, indices of critics to train $J$, Adam parameters $(\epsilon_c, \beta_1,\beta_2)$.}
\KwResult{A distribution $\mu_N$ which can be sampled from $\mu$, $(u_n)_{n=0}^{N-1}, (\eta_n)_{n=0}^{N-1}$ via \eqref{eq:generator_formula}.}
\For{$n \in \{0, \ldots, N-1\}$}{
 \If{$n \in J$}{\For{$i\in \{0, \ldots, C-1\}$,}{
  	Sample minibatches $\{x_j\}_{j=1}^M$, $\{y_j\}_{j=1}^M$, and $\{t_j\}_{j=1}^M$ from $\mu_n$ (via \eqref{eq:generator_formula}), $\nu$, and $U([0,1])$\;
  		$\tilde{x}_j \leftarrow (1-t_j)x_j + t_jy_j$\;
  		$L_{i} \leftarrow \frac{1}{M}\displaystyle{\sum_{j=1}^M} u_n(y_j) - u_n(x_j) \newline
  		\hspace*{0.4 in} + \lambda G(\nabla u_n(\tilde{x}_j))$\;
  		$w_n \leftarrow \text{Adam}(L_i, \epsilon_c, \beta_1,\beta_2)$\;}}
With $M'=100M$, sample minibatches $\{x_j\}_{j=1}^{M'}$, $\{y_j\}_{j=1}^{M'}$, and $\{t_j\}_{j=1}^{M'}$ from $\mu_n$ (via \eqref{eq:generator_formula}), $\nu$, and $U([0,1])$\;
  $\eta_n \leftarrow \frac{1}{M'}\displaystyle{\sum_{j=1}^{M'}}u_n(x_j) - u_n(y_j) - \lambda G(\nabla u_n(\tilde{x}_j))$\;
  \lIf{$n < N-1$}{ $w_{n+1} \leftarrow w_n$}}
\caption{TTC Training}\label{alg:TTC}
\end{algorithm}

\section{Experiments}
\label{sec:experiments}

A large number of computational problems can be formulated as searching for a method to transform a ``source" probability distribution into a ``target" one in an optimal way. This is what is provided by the approximate transport map obtained from TTC. We demonstrate the versatility of TTC by applying it to four types of imaging problems: denoising, generation, translation and deblurring. For denoising, we compare TTC to an algorithm from \cite{lunz2018adversarial}, assessing the quality of the images obtained using PSNR. For generation, we compare TTC to WGAN-GP \cite{gulrajani2017improved}, evaluating the performance of both methods using the Fréchet Inception Distance (FID) \cite{heusel2017gans}. For translation and deblurring, we limit our contribution to proof of concept experiments and judge TTC based on qualitative results. The link to a GitHub repository containing our code is included in \Cref{sec:exp_appendix}. 

In the case of image generation, the specific pairings between source and target samples obtained by approximating a Wasserstein 1 transport map with TTC does not hold a special significance. This is because  we use a source distribution consisting of Gaussian noise and train TTC to transport it towards a target distribution from which only samples are known; we can then generate new images from the target by applying TTC to randomly sampled Gaussian noise images. By contrast, for the other three applications we consider, it is of crucial importance to preserve underlying pairings between source and target samples. When denoising an image, for example, it is obviously important that the result be a clean version of the same image. By providing an approximate transport map, TTC naturally finds a correspondence between individual source and target samples which is appropriate for the task at hand. It does so without having to rely on explicit dataset labels -- in this sense, TTC performs unsupervised learning.

\subsection{Image denoising}
\label{sec:denoising}

\begin{figure}
    \centering
    \includegraphics[width = 0.45 \textwidth, clip = True]{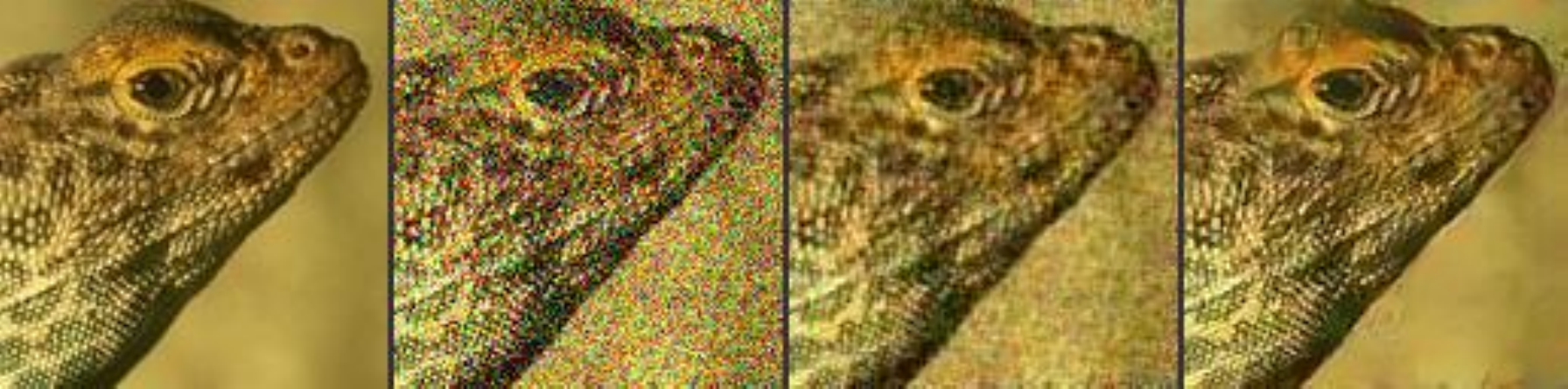}
    \caption{A single restored image from BSDS500 from noise level $\sigma = 0.2$. From left to right: original image, noisy image (PSNR = 14.0), restored image using \cite{lunz2018adversarial} (PSNR = 21.8), restored image using TTC (PSNR = 23.5).}
    \label{fig:denoising}
\end{figure}

\begin{table}
\centering
\begin{tabular}{c|ccc}
\multicolumn{1}{l}{} &
\multicolumn{3}{c}{Average PSNR (dB)} \\ 
 $\sigma$ & Noisy Image & Adv. Reg.& TTC  \\
\hline \\[-1.8ex]
0.1 & $20.0 \pm 0.03$ &  $27.1 \pm 0.9$ & $\mathbf{30.1} \pm 2.4$ \\
0.15 & $16.5 \pm 0.03$ & $24.8 \pm 0.8$ & $\mathbf{27.8} \pm 2.5$ \\
0.2 & $14.0 \pm 0.03$ & $23.0 \pm 0.9$ & $\mathbf{26.6} \pm 2.6$ \\
\end{tabular}
\caption{Results of the denoising experiments. PSNR values are reported as mean $\pm$ standard deviation, where the statistics are computed over the test set. In addition to having higher mean performance over the test set, TTC gives an improved PSNR for every image in the test set.}
\label{table:denoisingresults}
\end{table}

We apply TTC to restore images that have been corrupted with Gaussian noise. Specifically, we follow the experimental framework of \cite{lunz2018adversarial}, where the target distribution $\nu$ consists of random crops of the BSDS500 dataset \cite{arbelaez2010contour} and the source distribution $\mu$ is obtained from $\nu$ by adding i.i.d. Gaussian noise with standard deviation $\sigma$ to each image. In this setting, TTC bears an interesting relationship to the adversarial regularization method from \cite{lunz2018adversarial}. In that paper, a critic $u_0$ is obtained using the method from \cite{gulrajani2017improved} for  $W_1(\mu,\nu)$, and is then used as a learned regularizer in an inverse problem. This is applied to image restoration in the following way; given a noisy observation $x_0$, a denoised version is obtained by solving the minimization problem
\begin{equation}
\min_{x \in \Omega} \frac{1}{2}|x-x_0|^2 + \eta u_0(x),\label{prob:advreg}
\end{equation} 
where the parameter $\eta$ is estimated from the noise statistics. Incidentally, this requires the noise model to be known \textit{a priori}, as in \cite{moran2020noisier2noise}. In comparison, TTC does not require prior knowledge of the noise model because of its adaptive step size obtained by estimating $ W_1(\mu,\nu)$. The next proposition shows that, provided $\eta$ is small enough, the solution to \eqref{prob:advreg} is, in fact, equivalent to the solution obtained from a single step of TTC with step size $\eta$. As such, in this context, TTC can be thought of as an iterated form of the technique in \cite{lunz2018adversarial}, with an adaptive step size and where the critic is optionally updated after each reconstruction step.
\begin{proposition}
\label{prop:advregis1ttc}
Let $T_0$ be an optimal transport map for $W_1(\mu,\nu)$. If $\eta < \essinf_\mu |I-T_0|$, then for $\mu$-almost all $x_0$ there is a unique solution to \eqref{prob:advreg} given by\footnote{Recall, if $f:\Omega \rightarrow \RR$, then $\essinf_\mu(f)$ is the infimum of $f$ up to $\mu$ negligible sets, i.e. $\sup\{\ell \in \RR \mid \mu(f^{-1}((-\infty, \ell))) = 0\}$.}
\begin{equation}
x_1 = x_0 - \eta \nabla u_0(x_0).
\end{equation}
\end{proposition}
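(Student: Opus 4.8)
The plan is to show that for $\mu$-almost every $x_0$ the proposed point $x_1 = x_0 - \eta\nabla u_0(x_0)$ is the \emph{unique global} minimizer over $\Omega$ of $f(x) := \tfrac12|x-x_0|^2 + \eta u_0(x)$ (we take $u_0$ to be a Kantorovich potential for $W_1(\mu,\nu)$ and $\mu\ll\lebesgue$). Note $f$ need not be convex, since a $1$-Lipschitz function need not be concave, so a first-order or convexity argument will not suffice; the argument must be genuinely global, and the geometry is supplied by transport rays.

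First I would isolate a full-$\mu$-measure set of good source points. By Rademacher's theorem together with $\mu\ll\lebesgue$, $\nabla u_0(x_0)$ exists $\mu$-a.e. Since $\eta < \essinf_\mu|I-T_0|$, we have $|x_0 - T_0(x_0)| > \eta > 0$ $\mu$-a.e., so $x_0 \neq T_0(x_0)$, and \Cref{lem:ae_in_transport_rays} gives $u_0(x_0) - u_0(T_0(x_0)) = |x_0-T_0(x_0)|$; hence $[x_0, T_0(x_0)]$ lies in a transport ray $[a,b]$ of $u_0$ (upper endpoint $a$, lower endpoint $b$), with $x_0$ not the lower endpoint since $u_0(x_0) > u_0(T_0(x_0))$. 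Using the standard fact that the endpoints of transport rays form a Lebesgue-null set, $\mu$-a.e.\ such $x_0$ lies in the interior $]a,b[$; then \Cref{lem:affineanddifferentiableoninterior} gives $\nabla u_0(x_0) = (a-b)/|a-b| =: e$, $u_0$ is affine with unit slope along $[a,b]$, and $[a,b]$ is the \emph{only} transport ray through $x_0$ (\Cref{lem:two_rays_only_cross_at_endpoints}). Since $T_0(x_0) \in [x_0,b]$ we get $|x_0 - b| \ge |x_0 - T_0(x_0)| > \eta$, so $x_1 = x_0 - \eta e$ lies on the open segment $]x_0,b[ \subset \Omega$ and satisfies $u_0(x_1) = u_0(x_0) - \eta$.

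The core of the proof is a single inequality. For arbitrary $x\in\Omega$,
\[
f(x) - f(x_1) = \tfrac12|x-x_0|^2 - \tfrac12\eta^2 + \eta\bigl(u_0(x) - u_0(x_1)\bigr).
\]
The key move is to lower-bound $u_0(x)$ by the Lipschitz inequality \emph{anchored at $x_0$}, namely $u_0(x) \ge u_0(x_0) - |x-x_0| = u_0(x_1) + \eta - |x-x_0|$, rather than anchored at $x_1$; substituting yields
\[
f(x) - f(x_1) \ge \tfrac12|x-x_0|^2 - \eta|x-x_0| + \tfrac12\eta^2 = \tfrac12\bigl(|x-x_0| - \eta\bigr)^2 \ge 0 ,
\]
so $x_1$ is a global minimizer. (Anchoring at $x_1$ instead gives only a much weaker bound, useful only when $|x-x_0|$ is large; the square completes precisely because of the $x_0$-anchor combined with the identity $u_0(x_0) = u_0(x_1) + \eta$ coming from the transport ray.) For uniqueness, equality in the last display forces $|x-x_0| = \eta$ and $u_0(x_0) - u_0(x) = |x-x_0|$; the latter puts $[x_0,x]$ inside a transport ray, which by uniqueness is $[a,b]$, and then $x$ is the point of $[a,b]$ lying a distance $\eta$ below $x_0$, i.e.\ $x = x_1$.

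I expect the main obstacle to be conceptual rather than computational: recognizing that the non-convex global minimization collapses once $u_0(x)$ is bounded below via the Lipschitz inequality based at $x_0$, which completes the square into $\tfrac12(|x-x_0|-\eta)^2$. The remaining ingredients --- a.e.\ differentiability of $u_0$ under $\mu$, negligibility of transport-ray endpoints (so that $\mu$-a.e.\ $x_0$ is interior to a unique ray), affineness of $u_0$ along rays, and the fact that distinct rays meet only at endpoints --- are routine or already available from the results quoted above; one should also recall that $T_0$ exists under these hypotheses.
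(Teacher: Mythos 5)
Your proof is correct and is essentially the paper's argument: both exploit the $1$-Lipschitz bound anchored at $x_0$ to reduce the nonconvex problem to $\tfrac12 z^2-\eta z$ (your completed square $\tfrac12(|x-x_0|-\eta)^2$ is the same estimate), show $x_0-\eta\nabla u_0(x_0)$ attains the bound via the affine structure of $u_0$ along the transport ray containing $[x_0,T_0(x_0)]$ and $|x_0-T_0(x_0)|>\eta$, and obtain uniqueness because equality would force a second saturating direction at $x_0$, which is excluded on a full-$\mu$-measure set. The only cosmetic difference is that you invoke negligibility of ray endpoints where the paper instead uses \Cref{lem:graduistransportdirection} together with differentiability, but this does not change the substance.
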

For a fair comparison of our results against the adversarial regularization denoising technique from \cite{lunz2018adversarial}, we use the critic architecture from that paper. Referring to Algorithm \ref{alg:TTC}, we train TTC with $N=20$ and $J = \{1, \ \dots \, 20\}$; unlike in the case of image generation (see Section \ref{sec:ttcvswgangp}), we have found that training the critic at each step was preferable for this application. The full list of hyperparameters used for denoising with TTC can be found in \Cref{table:ttc_hyperparameters}. We run denoising experiments at different noise levels given by the noise standard deviations $\sigma = 0.1$, $0.15$, $0.2$. Table \ref{table:denoisingresults} shows the averages and standard deviations of the PSNR values obtained using both methods on each image in a test dataset of $128 \times 128$ BSDS500 cropped images. TTC outperforms adversarial regularization on all the images in this test dataset. Figure \ref{fig:denoising} depicts the results of both algorithms on a specific test image. We note that, since we train a critic at each step, TTC is significantly more computationally demanding than adversarial regularization. Details of the computational resources used to train TTC for denoising can be found in \Cref{table:ttc_hyperparameters}. Though the PSNR values obtained with TTC are somewhat lower than the state of the art (e.g. \cite{moran2020noisier2noise}), we feel that our results are impressive given that we train on unpaired data, that we do not use prior knowledge of the noise model, and that our technique was not specifically designed for image denoising.

\subsection{Image generation}
\label{sec:ttcvswgangp}
\begin{figure}
    \centering
    \includegraphics[width = 0.45 \textwidth, clip = True]{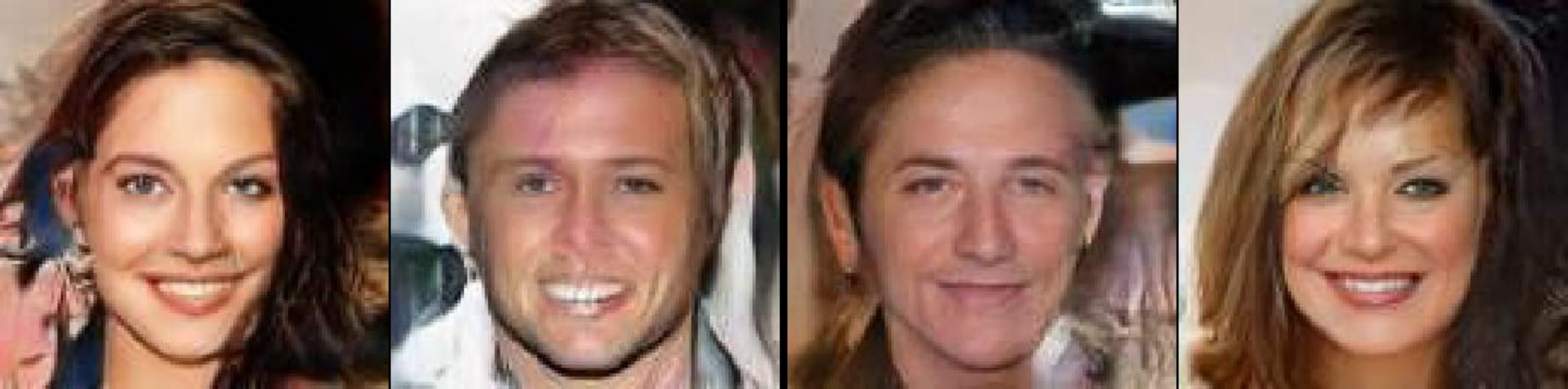}
    \caption{Generated samples produced by WGAN-GP and TTC trained on CelebaHQ. The two images on the left were generated with WGAN-GP and the two on the right with TTC. These images were some of the best that we could find within sets of 100 images produced with each algorithm while using the standard truncation trick on the noise input with a bound of 2.5.}\label{fig:generationexample}
\end{figure}

\begin{table}
\centering
\begin{tabular}{c|cc}
 & WGAN-GP & TTC  \\
\hline \\[-1.8ex]
MNIST & $18.3$ \hspace{0.07cm} \footnotesize{(200 min)}  & $\mathbf{6.5}$ \hspace{0.07cm} \footnotesize{(90 min)} \\
F-MNIST & $20.1$ \hspace{0.07cm} \footnotesize{(210 min)}  & $\mathbf{16.3}$ \hspace{0.07cm} \footnotesize{(90 min)} \\
CelebaHQ & $31.4$ \hspace{0.07cm} \footnotesize{(1290 min)}  & $\mathbf{31.2}$ \hspace{0.07cm} \footnotesize{(2030 min)} \\
\end{tabular}
\caption{Best FIDs obtained over the course of training for WGAN-GP and TTC, along with training time necessary to obtain this performance. These results were obtained without using the truncation trick. All experiments were run using one NVIDIA GPU; a P100 for MNIST and F-MNIST, and a V100 for CelebaHQ. TTC produces better FID values after a shorter training time than WGAN-GP for all datasets.}\label{table:TTCvsWGANGP}
\end{table}

We perform image generation experiments with TTC on three datasets: MNIST \cite{lecun1998gradient}, FashionMNIST \cite{xiao2017fashion} (abbreviated here as F-MNIST) and CelebaHQ \cite{karras2018progressive}. We evaluate TTC's generative performance through FID, using the implementation \cite{Seitzer2020FID}, and compare it to the performance we obtain using WGAN-GP \cite{gulrajani2017improved}. The MNIST and F-MNIST experiments are run at a resolution of $32 \times 32$ pixels with a single color channel, and use the InfoGAN architecture \cite{chen2016infogan} for the TTC critics as well as for the WGAN generator and critic. For TTC, we use $N=40$ and specify $J$ by training every other critic for the first 20 steps and then training every critic for the remaining 20 steps. The CelebaHQ experiment is run at a resolution of $128 \times 128$ pixels with three color channels, and uses the SNDCGAN architecture of \cite{kurach2019large} for all networks. For TTC, we take $N = 45$ and again specify $J$ by training every other critic for the first 20 steps and then training every critic for the remaining 25 steps. In all cases, we train both algorithms until their performance stops improving and we keep track of the training time required to reach this optimal state. As mentioned in Section \ref{sec:practicalalgorithms}, we use a gradient penalty parameter of $\lambda=1000$ for TTC, as this allows for a much better approximation of the Wasserstein 1 distance. When training WGAN-GP we use the standard value of $\lambda = 10$. Otherwise, the same hyperparameters are used for both algorithms whenever possible, e.g. mini-batch size, critic learning rate and Adam optimizer parameters. An important exception to this is the generator learning rate, which has a significant impact on WGAN-GP performance. We optimize this parameter for each dataset via a grid search. Details on all the training and FID evaluation parameters used for each experiment can be found in \Cref{table:ttc_hyperparameters}. The results are summarized in Table \ref{table:TTCvsWGANGP}, which includes the best FIDs attained and the training time required to reach them. TTC significantly outperforms WGAN-GP on the MNIST and F-MNIST datasets and requires a much shorter training time. The optimal performance of TTC and WGAN-GP are nearly equal on CelebaHQ, and while it is attained faster with WGAN-GP, we note that only very minor FID improvements occurred with TTC past the 1000 minute mark. Graphs presenting the relationship between training time and FID for both techniques and each dataset are included in \Cref{fig:fid_vs_time}.

\subsection{Image translation and deblurring}
\label{sec:translation}


\begin{figure}
    \centering
    \includegraphics[width = 0.45 \textwidth]{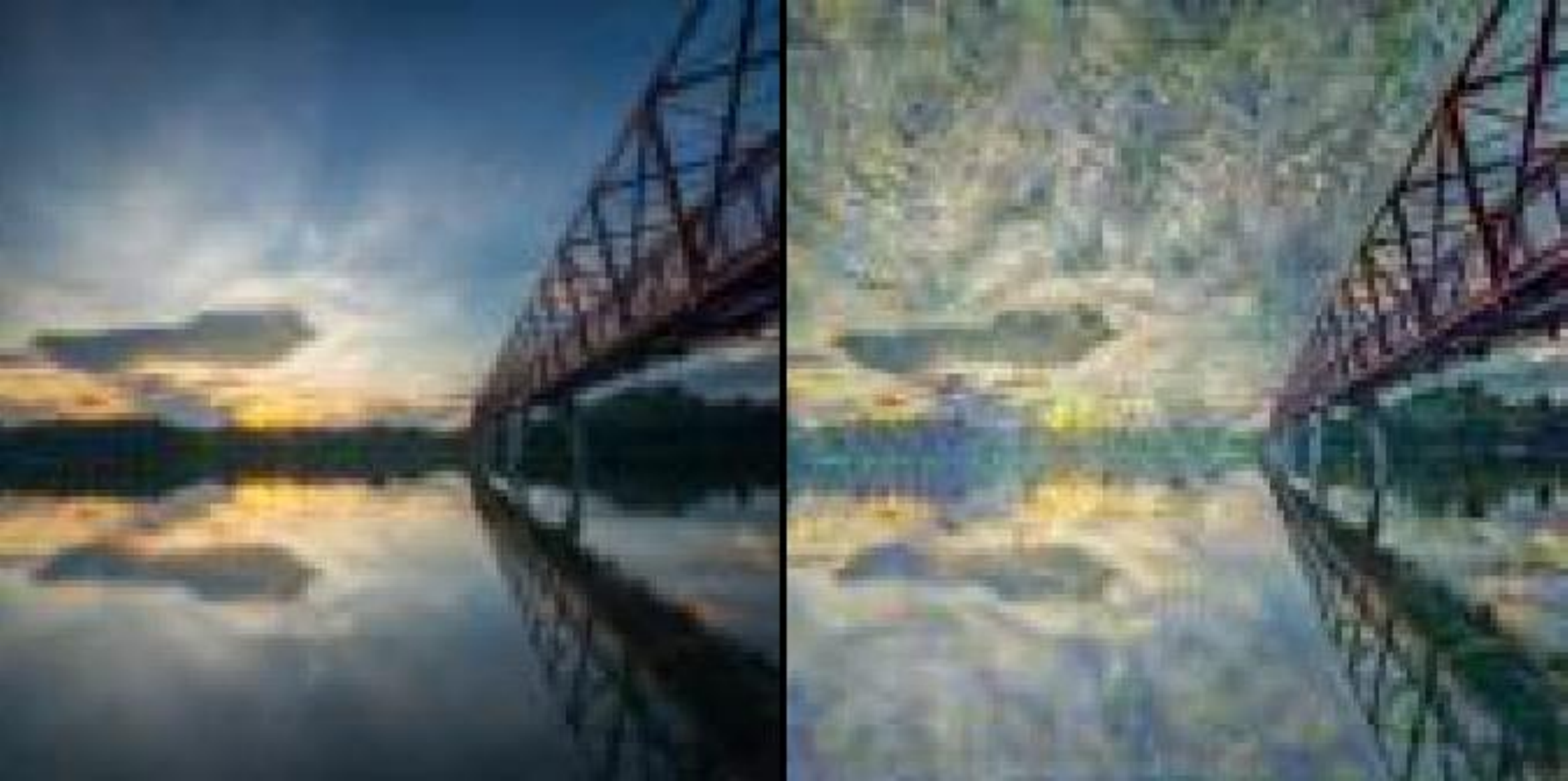}
    \caption{An example of TTC applied to translating landscape photos into Monet paintings.}\label{fig:translationexample}
\end{figure}
\begin{figure}
    \centering
    \includegraphics[width = 0.45 \textwidth, clip = True]{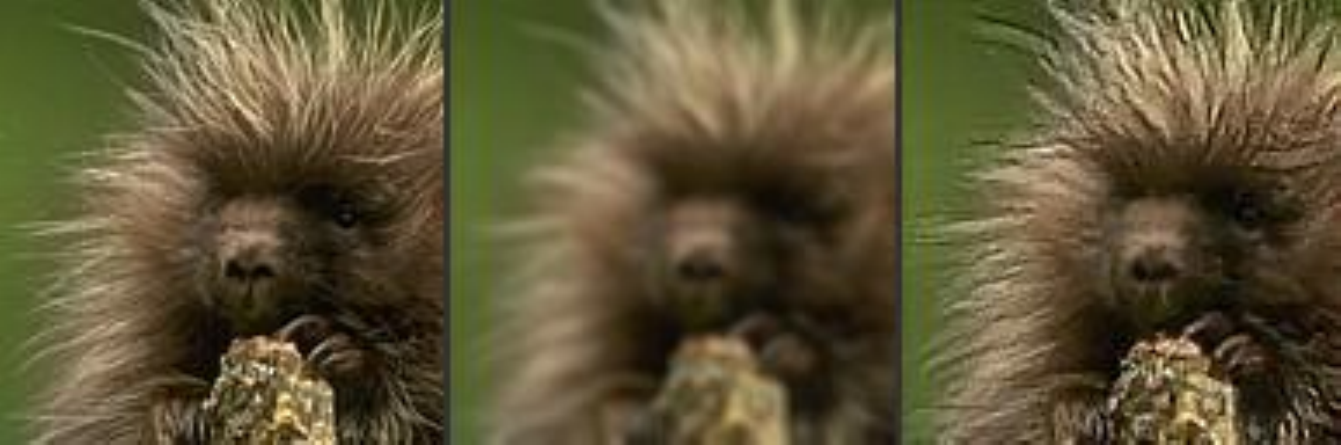}
    \caption{An example of TTC applied to deblurring. From left to right: original image, blurred image, restored image obtained with TTC.}\label{fig:deblurringexample}
\end{figure}

We further demonstrate the multipurpose flexibility of TTC by applying it to two additional problems: translation and deblurring. For translation, we use the Photograph and Monet datasets from \cite{zhu2017unpaired}. The former is used as the source, with the full images being resized to $128 \times 128$ pixels. We create the target distribution by taking $128 \times 128$ random crops of images in the Monet dataset. This means that we train TTC to translate real world images into corresponding “paintings” in the style of Monet; Figure \ref{fig:translationexample} shows an example of this. For deblurring, we once more use $128 \times 128$ random crops of images from the BSDS500 dataset. We create the blurred source distribution by using a $5 \times 5$ random Gaussian blurring filter with a standard deviation of $\sigma=2$. Figure \ref{fig:deblurringexample} displays an example of deblurring with TTC. For both of these experiments, we use the SNDCGAN critic architecture, and we train TTC with $N=20$ and $J = \{1, \ \dots \ , 20\}$. Details on the resources and time required for training are included in \Cref{table:ttc_hyperparameters}.

\section{Conclusion}
\label{sec:conclusion}
In this paper we have obtained a new formula for computing a Wasserstein 1 optimal transport map from a Kantorovich potential alone (\Cref{thm:map_from_potential}). This result holds if $\mu$ has a density and if $\nu$ is supported on a submanifold of codimension of at least $2$. Since these assumptions are natural in imaging problems as well as other applications, this result enables the computation of optimal transport maps for many problems of interest when a Kantorovich potential is known. For applications to high dimensional problems where the Kantorovich potential is only computed approximately, we proposed TTC, an iterative transport algorithm. This algorithm takes spatially uniform step sizes of the correct average displacement, and optionally trains new critics at each step to correct for the errors this introduces. The use of a spatially uniform step size was partly justified by \Cref{thm:guaranteed_reduction_if_lessthanmedian}. We also demonstrated through a variety of proof of concept experiments that TTC can be used as a multipurpose algorithm for various imaging tasks. This includes image denoising, generation, translation and deblurring, which normally require specialized approaches.

\bibliography{bibliography}


\onecolumn
\apptitle{A new method for determining Wasserstein 1 optimal transport maps from Kantorovich potentials, with deep learning applications --- Appendix}
\section{Detailed proofs}
\label{app:proofs}
\subsection{Proofs of background results for Wasserstein 1 transport}
In this section we include some proofs of the known results we stated in \Cref{sec:background_on_W1}, for the convenience of the reader. For the proofs of our novel results, the reader can skip to \Cref{sec:proofofmaintheorem}.

\begin{proof}[Proof of \Cref{lem:ae_in_transport_rays}]
Define
\begin{equation}
C = \{ x \in \Omega \mid  u_0(x) - u_0(T_0(x)) = |x-T_0(x)|\}. \label{eq:Cdef}
\end{equation}
Note that $C$ is Borel since $u_0$ is continuous and $T_0$ is Borel. If we can show $\mu(C) = 1$, we are done.  In fact, we can show that $\mu(\Omega \setminus C) = 0$ by the following argument: it is a standard consequence of the Kantorovich-Rubinstein formula (see e.g. the discussion following equation (3.2) in \cite{santambrogio2015optimal}) that
\begin{equation}
\spt((I,T_0)_\# \mu) \subset \{ (x,y) \in \Omega^2 \mid u_0(x) - u_0(y) = |x-y|\}. \label{eq:sptcontainedintrays}
\end{equation}
Letting $\Gamma_{T_0}(\Omega \setminus C) = \{ (x,T_0(x)) \mid x \in \Omega \setminus C\}$, \eqref{eq:sptcontainedintrays} gives us that
\begin{equation*}
(I, T_0)_\# \mu (\Gamma_{T_0}(\Omega \setminus C)) = 0,
\end{equation*}
but $(I, T_0)^{-1}(\Gamma_{T_0}(\Omega \setminus C)) = \Omega \setminus C$, so $\mu(\Omega \setminus C) = 0$.
\end{proof}
\begin{remark}
\label{remark:modifications_of_OT_map}
Note that we can redefine $T_0$ on $\mu$ negligible sets without affecting its optimality. Thus, by setting $T_0(x) = x$ for $x \in \Omega \setminus C$, we obtain that $u_0(x) - u_0(T_0(x)) = |x-T_0(x)|$ for all $x \in \Omega$. In the rest of this section we will often work with such $T_0$. 
\end{remark}
We will not include a proof of \Cref{lem:affineanddifferentiableoninterior} as it is essentially Lemmas 3.5 and 3.6 from \cite{santambrogio2015optimal}. These results can be partially extended; if $u$ is differentiable at the ray endpoints the same formula for the derivative from \Cref{lem:affineanddifferentiableoninterior} applies to these points. This is the content of the following result. 
\begin{lemma}
If $[x,y]$ is a transport ray of $u$ and $u$ is differentiable at either endpoint then \eqref{eq:graduisraydirection} also holds at that endpoint.\label{lem:graduisraydirectionextends}
\end{lemma}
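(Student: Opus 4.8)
The plan is to use the fact that $u$ is affine along the transport ray, which is precisely \eqref{eq:uisaffineonrays} in \Cref{lem:affineanddifferentiableoninterior}, to compute a one-sided directional derivative of $u$ at the endpoint in question, and then to combine this with the $1$-Lipschitz bound $|\nabla u| \le 1$ and the equality case of Cauchy--Schwarz. First I would handle the upper endpoint $x$. Put $v = \frac{y-x}{|y-x|}$, the unit vector pointing from $x$ into the ray. For $t \in [0,|y-x|]$ the point $x+tv$ lies on $[x,y]$, and writing $x+tv = (1-s)x+sy$ with $s = t/|y-x|$, equation \eqref{eq:uisaffineonrays} gives $u(x+tv) = u(x) + s\,(u(y)-u(x))$. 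Since $[x,y]$ is a transport ray we have $u(x)-u(y) = |x-y|$, hence $u(y)-u(x) = -|y-x|$, and therefore
\[
\frac{u(x+tv)-u(x)}{t} = -1 \qquad \text{for all } t \in (0,|y-x|].
\]
If $u$ is differentiable at $x$, letting $t \to 0^+$ yields $\nabla u(x)\cdot v = -1$.

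Next I would invoke $u \in \onelip$: differentiability at $x$ together with the Lipschitz bound forces $|\nabla u(x)| \le 1$ (for any unit $e$, $|u(x+te)-u(x)|/t \le 1$, so $|\nabla u(x)\cdot e| \le 1$). Combining $\nabla u(x)\cdot v = -1$ with Cauchy--Schwarz gives $-1 = \nabla u(x)\cdot v \ge -|\nabla u(x)|\,|v| = -|\nabla u(x)| \ge -1$, so every inequality is an equality; in particular $|\nabla u(x)| = 1$, and the equality case of Cauchy--Schwarz, with the sign fixed, forces $\nabla u(x) = -v = \frac{x-y}{|x-y|}$, which is exactly \eqref{eq:graduisraydirection}.

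The lower endpoint $y$ is symmetric. With $w = \frac{x-y}{|x-y|}$ the same computation, now approaching $y$ along the ray from the $x$ side, gives $\frac{u(y+tw)-u(y)}{t} = +1$ for small $t>0$, so differentiability at $y$ yields $\nabla u(y)\cdot w = 1$, and the identical Cauchy--Schwarz argument forces $\nabla u(y) = w = \frac{x-y}{|x-y|}$, again \eqref{eq:graduisraydirection}.

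There is no real obstacle here; the only points needing care are (i) that the (two-sided) notion of differentiability at an endpoint still delivers the relevant one-sided directional derivative along the ray, which is immediate from the definition of the gradient, and (ii) bookkeeping the signs at each endpoint so that the Cauchy--Schwarz equality case produces $\frac{x-y}{|x-y|}$ rather than its negative.
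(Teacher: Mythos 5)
Your proof is correct, and it is essentially the argument the paper relies on: the paper simply cites Corollary 3.8 of \cite{santambrogio2015optimal}, whose proof likewise computes the one-sided incremental ratio of $u$ along the ray (equal to $\pm 1$ by the affine property \eqref{eq:uisaffineonrays}) and combines it with the bound $|\nabla u|\le 1$ and the Cauchy--Schwarz equality case to pin down $\nabla u$ at the endpoint. Your write-up is a correct, self-contained version of that same argument, with the signs at the two endpoints handled properly.
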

\begin{proof}
The proof is contained in the proof of Corollary 3.8 from \cite{santambrogio2015optimal}.
\end{proof}

As an easy application of \Cref{lem:ae_in_transport_rays}, \Cref{lem:affineanddifferentiableoninterior}, and \Cref{lem:graduisraydirectionextends}, we can prove that $-\nabla u_0(x)$ gives the direction of optimal transport whenever $x \neq T_0(x)$. We stated this result informally in \Cref{sec:introduction}, and provide a formal statement and proof here. 
\begin{lemma}
\label{lem:graduistransportdirection}
Let $\mu \ll \mathcal{L}_d$. Suppose that $u_0$ and $T_0$ are a Kantorovich potential and optimal map, respectively, for $W_1(\mu,\nu)$. If $\mu\left(\{x \in \Omega \mid x \neq T_0(x)\} \right)>0$, then
\begin{equation}
    \mu(\{-\nabla u_0(x) = \frac{T_0(x)-x}{|T_0(x)-x|}\} \mid \{x \neq T_0(x)\}) = 1,\label{eq:conditional}
\end{equation}
where $\mu(A \mid B)$ is the conditional probability of event $A$ given $B$. In this sense, $x_0 \neq T_0(x)$ implies $-\nabla u_0(x) = \frac{T_0(x)-x}{|T_0(x)-x|}$ with $\mu$ probability $1$. 
\end{lemma}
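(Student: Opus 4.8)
\emph{Proof proposal.} The plan is to combine the almost-sure membership of optimal pairs in transport rays (\Cref{lem:ae_in_transport_rays}) with the structure of $u_0$ along rays (\Cref{lem:affineanddifferentiableoninterior} and \Cref{lem:graduisraydirectionextends}) and Rademacher's theorem. Write $E = \{x \in \Omega \mid x \neq T_0(x)\}$; by hypothesis $\mu(E) > 0$, so the conditional probability in \eqref{eq:conditional} is well defined, and it suffices to show that the exceptional set
\[
B = \Bigl\{ x \in E \ :\ -\nabla u_0(x) \neq \tfrac{T_0(x)-x}{|T_0(x)-x|} \Bigr\}
\]
is $\mu$-negligible. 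First I would remove two null sets: by \Cref{lem:ae_in_transport_rays} the set $N_1$ on which $u_0(x) - u_0(T_0(x)) \neq |x - T_0(x)|$ satisfies $\mu(N_1) = 0$; and since $u_0$ is $1$-Lipschitz on $\Omega$, Rademacher's theorem gives $\lebesgue(N_2) = 0$ for the set $N_2$ of non-differentiability points of $u_0$, hence $\mu(N_2) = 0$ because $\mu \ll \lebesgue$. It then remains to check that the desired formula holds at every $x \in E \setminus (N_1 \cup N_2)$, which shows $B \subseteq N_1 \cup N_2$ and hence $\mu(B) = 0$.

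So fix such an $x$: we have $x \neq T_0(x)$, $u_0$ differentiable at $x$, and $u_0(x) - u_0(T_0(x)) = |x - T_0(x)|$. Then the segment $[x, T_0(x)]$ saturates the Lipschitz inequality and, by a standard compactness argument, extends to a maximal such segment, i.e. a transport ray $[a,b]$ of $u_0$ with upper endpoint $a$ and lower endpoint $b$. Parametrize $x = (1-s)a + sb$ and $T_0(x) = (1-r)a + rb$ with $s,r \in [0,1]$. By \eqref{eq:uisaffineonrays}, $u_0((1-t)a + tb) = u_0(a) - t|a-b|$, so $u_0(x) - u_0(T_0(x)) = (r-s)|a-b|$ while $|x - T_0(x)| = |r-s|\,|a-b|$; equality forces $r - s > 0$ (strict, since $x \neq T_0(x)$), and in particular $s < 1$, so $x$ lies either in the interior $]a,b[$ or equals the upper endpoint $a$. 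In the interior case \eqref{eq:graduisraydirection} gives $\nabla u_0(x) = \tfrac{a-b}{|a-b|}$, and in the endpoint case the same conclusion follows from \Cref{lem:graduisraydirectionextends} since $u_0$ is differentiable at $x$. Finally $T_0(x) - x = (r-s)(b-a)$ with $r - s > 0$, so $\tfrac{T_0(x)-x}{|T_0(x)-x|} = \tfrac{b-a}{|b-a|} = -\nabla u_0(x)$, as claimed.

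I expect the only genuinely delicate point to be the passage from the saturating segment $[x, T_0(x)]$ to an ambient transport ray, together with the small case analysis of which endpoint $x$ can occupy; once $u_0$ is known to be differentiable at $x$, \Cref{lem:graduisraydirectionextends} removes the usual worry about ray endpoints, and the rest is just bookkeeping with the cited lemmas.
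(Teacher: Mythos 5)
Your proposal is correct and follows essentially the same route as the paper's proof: both discard the $\mu$-null sets where the saturation identity of \Cref{lem:ae_in_transport_rays} fails or where $u_0$ is not differentiable (using $\mu \ll \lebesgue$ and Rademacher), place $[x,T_0(x)]$ inside a transport ray, and invoke \Cref{lem:affineanddifferentiableoninterior} together with \Cref{lem:graduisraydirectionextends} to identify $\nabla u_0(x)$ with the ray direction. The only difference is presentational: the paper absorbs the saturation null set by redefining $T_0$ as in \Cref{remark:modifications_of_OT_map}, while you track it explicitly and spell out the interior-versus-upper-endpoint case analysis that the paper leaves implicit.
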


\begin{proof}
Following \Cref{remark:modifications_of_OT_map},  we may assume with no loss of generality that
\begin{equation}
\{ x \in \Omega \mid u_0(x) - u_0(T_0(x)) = |x-T_0(x)|\} = \Omega. \label{eq:T_0_saturates_lip}
\end{equation} 
Since $u_0$ is Lipschitz, there is a Borel set $D \subset \Omega$ such that $u_0$ is differentiable on $D$ and $\mathcal{L}_d(\Omega \setminus D) = 0$.
We claim that 
\begin{equation}
    \{x \in \Omega \mid x \neq T_0(x) \text{ and } -\nabla u_0(x) \neq \frac{T_0(x)-x}{|T_0(x)-x|}\} \cap D = \emptyset.\label{eq:both_and_diff_is_empty}
\end{equation}
Indeed, suppose $x \neq T_0(x)$ and $x \in D$. By \eqref{eq:T_0_saturates_lip} we have that the segment $[x,T_0(x)]$ is in a transport ray. By \Cref{lem:affineanddifferentiableoninterior} or  \Cref{lem:graduisraydirectionextends}, we obtain that 
\begin{equation*}
\nabla u_0(x) = \frac{x-T_0(x)}{|x-T_0(x)|}.
\end{equation*}
As such, \eqref{eq:both_and_diff_is_empty} holds. Further, $\mu(\Omega \setminus D) = 0$ since $\mathcal{L}_d(\Omega \setminus D) = 0$, and $\mu \ll \mathcal{L}_d$. This, with \eqref{eq:both_and_diff_is_empty}, implies 
\begin{equation*}
\mu\left(\{x \in \Omega \mid x \neq T_0(x) \text{ and } -\nabla u_0(x) \neq \frac{T_0(x)-x}{|T_0(x)-x|}\}\right) = 0.
\end{equation*}
To prove \eqref{eq:conditional}, we have, by definition,
\begin{align*}
     \mu(\{-\nabla u_0(x) = \frac{T_0(x)-x}{|T_0(x)-x|}\} \mid \{x \neq T_0(x)\})&= \frac{\mu\left(\{x \in \Omega \mid x \neq T_0(x) \text{ and } -\nabla u_0(x) = \frac{T_0(x)-x}{|T_0(x)-x|}\}\right)}{\mu\left(\{x \in \Omega \mid x \neq T_0(x)\}\right)},\\
     &= \frac{\mu\left(x \in \Omega \mid x \neq T_0(x)\right)}{\mu\left(x \in \Omega \mid x \neq T_0(x)\right)},\\
     &= 1,
\end{align*}
as claimed.
\end{proof}

We can also use \Cref{lem:affineanddifferentiableoninterior} and \Cref{lem:graduisraydirectionextends} to prove \Cref{lem:two_rays_only_cross_at_endpoints}.
\begin{proof}[Proof of \Cref{lem:two_rays_only_cross_at_endpoints}]
Suppose without loss of generality that $w\in ]x,y[$. Then $\nabla u(w)$ exists by \Cref{lem:affineanddifferentiableoninterior} and
\begin{equation*}
    \nabla u (w) = \frac{x-y}{|x-y|}.
\end{equation*}
But since $u$ is differentiable at $w$ we also have via \Cref{lem:affineanddifferentiableoninterior} or \Cref{lem:graduisraydirectionextends} that
\begin{equation*}
    \nabla u (w) = \frac{x'-y'}{|x'-y'|},
\end{equation*}
which is a contradiction since $[x,y]$ and $[x',y']$ are distinct. So the crossing point $w$ must be an endpoint of both rays. Suppose that $w = x = y'$. Then
\begin{align*}
    u(x') - u(y) &= u(x') - u(y') + u(x) - u(y),\\
    &= |x'-y'| + |x-y|,\\
    &\geq |x'-y|.
\end{align*}
Since $u \in \onelip$, however, we have $u(x') - u(y) \leq |x'-y|$, and thus $u(x') - u(y) = |x'-y|$, and the preceding inequality is an equality. Thus the four points $x, y, x', y'$ are colinear, and $u$ saturates its Lipschitz bound on the segment $[x,y']$. This segment strictly contains the transport ray $[x,y]$, a contradiction to the definition of transport rays.
\end{proof}
Next we prove \Cref{prop:Lipschitzgradient}, which establishes that $x \mapsto \nabla u_0(x)$ is a Lipschitz function away from the endpoints of transport rays. The following proof is part of the proof of a larger result (Lemma 22) from \cite{caffarelli2002constructing}, but we include it here in a self contained form for the convenience of the reader.
\begin{proof}[Proof of \Cref{prop:Lipschitzgradient}]
Let $z, z' \in A_j$. Note that since $u_0 \in \onelip$, if $|z-z'| \geq \frac{1}{2j}$ then we have the trivial Lipschitz bound
\begin{equation}
|\nabla u_0(z) - \nabla u_0(z')| \leq 2 \leq 4 j |z-z'|.\label{eq:triviallipbound}
\end{equation}
Thus, we focus on the case $|z-z'| < \frac{1}{2j}$. In this case the Lipschitz constant of $u_0$ allows us to bound the variation in $u_0$ on these points;
\begin{equation}
|u_0(z) - u_0(z')| < \frac{1}{2j}.\label{eq:boundonuvariation}
\end{equation}
Set $w' = z' + (u_0(z) - u_0(z')) \nabla u_0(z')$. By \eqref{eq:boundonuvariation}, we have that $w'$ and $z'$ are on the same transport ray. Indeed, $w'$ is at most $\frac{1}{2j}$ away from $z'$, and $z'$ is at least $\frac{1}{j}$ from the endpoints of the transport ray it is contained in by definition of $A_j$.

Since $w'$ and $z'$ are on the same transport ray, $w'$ lies on the same level set of $u_0$ as $z$. Indeed, using  \Cref{lem:affineanddifferentiableoninterior},
\begin{align*}
u_0(w') &= u_0(z' + (u_0(z) - u_0(z'))\nabla u_0(z')),\\
&= u_0(z') + (u_0(z) - u_0(z')),\\
&= u_0(z).
\end{align*}
Since both $w'$ and $z$ are interior points of their transport rays and are on the same level set of $u_0$ we can then invoke Lemma 16 from \cite{caffarelli2002constructing} to obtain that
\begin{equation*}
|\nabla u_0(w') - \nabla u_0(z) | \leq \frac{1}{\sigma}|w'-z|,
\end{equation*}
where $\sigma$ is the minimal distance from $w'$ or $z$ to the endpoints of its transport ray; by construction this is at least $\frac{1}{2j}$. Hence,
\begin{equation*}
|\nabla u_0(w') - \nabla u_0(z) | \leq 2j|w'-z|.
\end{equation*}
Given that $\nabla u_0(w') = \nabla u_0(z')$, we therefore have
\begin{equation}
|\nabla u_0(z') - \nabla u_0(z) | \leq 2j|z'-z| + 2j|w'-z'|.\label{eq:preliminlipbound}
\end{equation}
Estimating the last term,
\begin{align*}
|w'-z'| &=  |u_0(z) - u_0(z')| ,\\
&\leq |z-z'|,
\end{align*}
whence \eqref{eq:preliminlipbound} gives us
\begin{equation*}
|\nabla u_0(z') - \nabla u_0(z) | \leq 4j |z'-z|
\end{equation*}
for all $|z'-z| \leq \frac{1}{2j}$. Combining this with \eqref{eq:triviallipbound}, we obtain that $z \mapsto \nabla u_0 
(z)$ is Lipschitz on $A_j$ with constant $4j$. 
\end{proof}
Since we will need it for the proof of \Cref{thm:map_from_potential}, we include here a result on the upper semi-continuity of $\alpha$ and $\beta$. The proof is almost identical to that of Lemma 24 in \cite{caffarelli2002constructing}. Indeed, the only difference is that our $\alpha$ and $\beta$ are defined using the supremum over the compact set $\Omega$, as opposed to $\spt(\nu)$ and $\spt(\mu)$, respectively. But compactness is the only essential ingredient in the proof of Lemma 24 from \cite{caffarelli2002constructing}, so there is practically no change to the argument. 

\begin{lemma}
If $\Omega$ is compact, the functions $\alpha$ and $\beta$ are upper semi-continuous.\label{lem:ell+uppersemicont}
\end{lemma}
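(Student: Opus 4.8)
The plan is to show directly that $\alpha$ is upper semi-continuous, i.e.\ that for every sequence $x_n \to x$ in $\Omega$ we have $\limsup_{n\to\infty} \alpha(x_n) \leq \alpha(x)$; the argument for $\beta$ is entirely symmetric (it is obtained by replacing $u_0$ with $-u_0$, which swaps the roles of upper and lower endpoints). First I would recall that $\alpha(x) = \sup\{|x-z| \mid z \in \Omega,\ u_0(x) - u_0(z) = |x-z|\}$, and note that this supremum is attained: the constraint set is a closed subset of the compact set $\Omega$ (closed because $u_0$ is continuous and $z \mapsto |x-z|$ is continuous, so $\{z : u_0(x)-u_0(z) = |x-z|\}$ is closed), and it is nonempty since $z = x$ always satisfies the constraint (giving $\alpha(x) \geq 0$). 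Hence there is some $z_x \in \Omega$ realizing the supremum.

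Next, given $x_n \to x$, pick for each $n$ a maximizer $z_n \in \Omega$ with $u_0(x_n) - u_0(z_n) = |x_n - z_n|$ and $|x_n - z_n| = \alpha(x_n)$. By compactness of $\Omega$, passing to a subsequence (along which $\alpha(x_n)$ converges to $\limsup_n \alpha(x_n)$), we may assume $z_n \to z_\infty$ for some $z_\infty \in \Omega$. Passing to the limit in the constraint equation, and using the continuity of $u_0$ and of the Euclidean norm, we get $u_0(x) - u_0(z_\infty) = |x - z_\infty|$, so $z_\infty$ is admissible in the definition of $\alpha(x)$. Therefore
\begin{equation*}
\limsup_{n\to\infty} \alpha(x_n) = \lim_{n\to\infty} |x_n - z_n| = |x - z_\infty| \leq \alpha(x),
\end{equation*}
which is exactly upper semi-continuity of $\alpha$. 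Applying this to $-u_0 \in \onelip$ (which is $1$-Lipschitz whenever $u_0$ is) and observing that the admissible set for $\alpha$ with respect to $-u_0$ is $\{z : -u_0(x) + u_0(z) = |x-z|\} = \{z : u_0(z) - u_0(x) = |x-z|\}$, which is exactly the admissible set for $\beta$ with respect to $u_0$, gives the upper semi-continuity of $\beta$ as well.

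The only real content here is the compactness extraction, so there is essentially no obstacle; the main thing to be careful about is confirming that the constraint set is nonempty (so the supremum is over a nonempty set and the maximizers genuinely exist) and that the passage to the limit in the equality constraint is legitimate, both of which follow immediately from continuity of $u_0$ and compactness of $\Omega$. As the paper notes, this is the same argument as Lemma~24 of \cite{caffarelli2002constructing}, the only cosmetic difference being that our supremum ranges over the compact set $\Omega$ rather than over $\spt(\nu)$ or $\spt(\mu)$; since compactness is all that the original proof uses, the adaptation is immediate.
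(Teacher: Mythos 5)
Your proof is correct and follows essentially the same route as the paper's: extract a convergent subsequence of (near-)maximizers using compactness of $\Omega$, pass to the limit in the saturation constraint via continuity of $u_0$, and conclude the limit point is admissible. The only cosmetic differences are that you use exact maximizers (justified, since the constraint set is closed and nonempty) where the paper uses $1/n$-approximate ones, and you obtain $\beta$ from $\alpha$ by replacing $u_0$ with $-u_0$ rather than writing the symmetric argument separately.
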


\begin{proof}
We will only prove the result for $\beta$, as the result for $\alpha$ is quite similar. Let $(x_n)_n$ be a sequence in $\Omega$ such that $\lim_{n\rightarrow \infty} x_n = x_0$ and  $\lim_{n\rightarrow \infty} \beta(x_n) = \beta_0$. We seek to prove that
\begin{equation*}
\beta(x_0) \geq \beta_0.
\end{equation*}
We note that since $\Omega$ is compact, $\beta(x) < \infty$ for all $x$.  By definition, for each $n$ there exists $z_n \in \Omega$ such that
\begin{equation*}
\beta(x_n) - \frac{1}{n}\leq |z_n-x_n| = u_0(z_n) - u_0(x_n).
\end{equation*}
Since $\Omega$ is compact, a subsequence of the $z_n$ converges to some $z_0 \in \Omega$. Taking the limit of the preceding equation under this subsequence, and using continuity of $u_0$,
\begin{equation*}
\beta_0 \leq |z_0 - x_0| = u_0(z_0) - u_0(x_0).
\end{equation*}
As such, $|x_0-z_0| \leq \beta(x_0)$, establishing the desired inequality and proving that $\beta$ is upper semi-continuous.
\end{proof}

\subsection{Proof of \Cref{thm:map_from_potential}}
\label{sec:proofofmaintheorem}
In this section we will prove \Cref{thm:map_from_potential}, following the outline provided in \Cref{subsec:proofsketch_of_theorem1}. Throughout we will often tacitly assume the hypotheses of \Cref{thm:map_from_potential}, even though they may not all be needed in each of the results we prove here. First, we show that provided the set $A$ (see \eqref{def:Adef}) has $\nu$ measure zero, then $\alpha$ is equal to the transport length $|x-T_0(x)|$.
\begin{lemma}
\label{lem:alpha_is_transport_distance}
Let $u_0$ and $T_0$ be a Kantorovich potential and optimal transport map, respectively, for $W_1(\mu,\nu)$. If $\nu(A) = 0$, then $\mu$ almost everywhere,
\begin{equation}
    \alpha(x) = |x-T_0(x)|.\label{eq:alpha_is_transport_distance}
\end{equation}
\end{lemma}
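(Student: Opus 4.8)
The plan is to establish the two inequalities $\alpha(x)\ge|x-T_0(x)|$ and $\alpha(x)\le|x-T_0(x)|$ separately, each holding for $\mu$-a.e.\ $x$. The first is immediate from \Cref{lem:ae_in_transport_rays}: since $u_0(x)-u_0(T_0(x))=|x-T_0(x)|$ holds $\mu$-a.e.\ and $T_0(x)\in\Omega$, the point $z=T_0(x)$ is admissible in the supremum defining $\alpha(x)$.

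For the reverse inequality I would first prove the key intermediate claim that $\alpha(T_0(x))=0$ for $\mu$-a.e.\ $x$; this is where the hypothesis $\nu(A)=0$ enters, together with the standing assumptions of \Cref{thm:map_from_potential}. Since $\dim M\le d-2$ forces $\mathcal{L}_d(M)=0$ and hence $\mu(M)=0$, while $T_0(x)\in\spt(\nu)\subset M$ for $\mu$-a.e.\ $x$ (as $(T_0)_\#\mu=\nu$), we get $x\ne T_0(x)$, and therefore $|x-T_0(x)|>0$, for $\mu$-a.e.\ $x$. Plugging $z=x$ into the definition of $\beta$ and using $u_0(x)-u_0(T_0(x))=|x-T_0(x)|$ then gives $\beta(T_0(x))\ge|x-T_0(x)|>0$ $\mu$-a.e. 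On the other hand, $\mu(T_0^{-1}(A))=\nu(A)=0$, so for $\mu$-a.e.\ $x$ we have $T_0(x)\in\spt(\nu)\setminus A$, i.e.\ $\alpha(T_0(x))\beta(T_0(x))=0$; combining with the previous sentence yields $\alpha(T_0(x))=0$ $\mu$-a.e., as claimed.

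With this in hand I would argue as follows for a generic $x$ in the full $\mu$-measure set on which, additionally, $u_0$ is differentiable at $x$ (using $\mu\ll\mathcal{L}_d$ and Rademacher's theorem) and $\nabla u_0(x)=\tfrac{x-T_0(x)}{|x-T_0(x)|}$ (\Cref{lem:graduistransportdirection}). Let $z\in\Omega$ be admissible, i.e.\ $u_0(x)-u_0(z)=|x-z|$, and assume $z\ne x$ (the case $z=x$ being trivial). Then $u_0$ is affine with slope $-1$ along the segment $[x,z]$, so its directional derivative at $x$ towards $z$ equals $-1$; together with $|\nabla u_0(x)|\le1$ this forces $\nabla u_0(x)=\tfrac{x-z}{|x-z|}$, and comparing with the direction formula shows $z$ lies on the same ray emanating from $x$ as $T_0(x)$. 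If we had $|x-z|>|x-T_0(x)|$, then $T_0(x)$ would lie strictly between $x$ and $z$, whence $u_0(T_0(x))-u_0(z)=\bigl(u_0(x)-u_0(z)\bigr)-\bigl(u_0(x)-u_0(T_0(x))\bigr)=|x-z|-|x-T_0(x)|=|T_0(x)-z|>0$, contradicting $\alpha(T_0(x))=0$. Hence $|x-z|\le|x-T_0(x)|$ for every admissible $z$, and taking the supremum gives $\alpha(x)\le|x-T_0(x)|$.

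The main obstacle is this last step: a priori there might be an admissible $z$ that is \emph{not} collinear with $x$ and $T_0(x)$ (for instance if $x$ were a shared upper endpoint of two distinct transport rays, which \Cref{lem:two_rays_only_cross_at_endpoints} does not preclude), and ruling this out is precisely what the a.e.\ differentiability of $u_0$ and the direction formula of \Cref{lem:graduistransportdirection} accomplish. Everything else is bookkeeping: checking that $A$ and the other sets involved are Borel (using upper semicontinuity of $\alpha,\beta$ from \Cref{lem:ell+uppersemicont}) so that the identity $\mu(T_0^{-1}(A))=\nu(A)$ applies, and intersecting the various full-measure sets.
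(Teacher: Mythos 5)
Your proposal is correct and follows essentially the same route as the paper: both rely on $\alpha(x)\ge|x-T_0(x)|$ being immediate from ray saturation, and both obtain the reverse bound by pulling back $\nu(A)=0$ through $T_0$, using $\beta(T_0(x))>0$ (from $x\ne T_0(x)$ a.e.) and the differentiability of $u_0$ at $x$ to force any admissible $z$ onto the ray through $T_0(x)$, so that a longer $z$ would put $T_0(x)$ in $A$. The only difference is organizational (you first prove $\alpha(T_0(x))=0$ a.e.\ and argue pointwise, while the paper shows the strict-inequality set lies in $T_0^{-1}(A)$), and your explicit directional-derivative/collinearity step just spells out what the paper leaves implicit.
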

\begin{proof}
Again, we may assume without loss of generality that
\begin{equation}
\{ x \in \Omega \mid u_0(x) - u_0(T_0(x)) = |x-T_0(x)|\} = \Omega. \label{eq:T_0_saturates_lip_2}
\end{equation} 
By definition, we obtain $\alpha(x) \geq |x-T_0(x)|$ for all $x$. To prove $\alpha(x) \leq |x-T_0(x)|$ $\mu$ almost surely, define $E$ as the set
\begin{equation}
 E = \{ x \in \Omega \mid |x-T_0(x)| < \alpha(x)\}.
\end{equation}
$E$ is Borel since $T_0$ is Borel and $\alpha$ is upper semi-continuous (\Cref{lem:ell+uppersemicont}). We aim to show that $\mu(E) = 0$. Since $\mu(T_0^{-1}(\Omega \setminus \spt(\nu))) = \nu(\Omega \setminus \spt(\nu))= 0$, we obtain that $E \cap T_0^{-1}(\Omega \setminus \spt(\nu))$ is $\mu$ negligible. Further, $\mu(\spt(\nu)) \leq \mu(M) = 0$, and there exists a Borel set $D$ such that $u_0$ is differentiable on $D$ and $\mathcal{L}_d(\Omega \setminus D) = 0$. Thus, to prove $\mu(E) = 0$, we need only show
\begin{equation*}
\mu(E \cap T_0^{-1}(\spt(\nu)) \cap (\Omega \setminus \spt(\nu)) \cap D) = 0.
\end{equation*}
For all $x$ in this set, $x \neq T_0(x)$. As such, the segment $[x, T_0(x)]$ is contained in a transport ray of $u_0$; because $\nabla u_0(x)$ exists, this is the unique transport ray that $x$ is in. Since $\alpha(x) > |x-T_0(x)|$, we have $\alpha(T_0(x)) > 0$. Further, since $T_0(x) \neq x$, $\beta(T_0(x)) >0$ as well. Thus,
\begin{equation}
E \cap T_0^{-1}(\spt(\nu))  \cap (\Omega \setminus \spt(\nu)) \cap D \subset T_0^{-1}(A)
\end{equation} 
which means that
\begin{equation}
\mu(E) \leq \mu(T_0^{-1}(A)) = \nu(A) = 0.
\end{equation}
Thus, for $\mu$ almost all $x$, $|x-T_0(x)| \geq \alpha(x)$, implying \eqref{eq:alpha_is_transport_distance}.
\end{proof}
Next we prove that $\mu$ having a density with respect to $\lebesgue$ and $\spt(\nu) \subset M$ implies that $\nu(A) = 0$.
\begin{proposition}
Under the assumptions of \Cref{thm:map_from_potential}, $\nu(A) = 0$.\label{prop:goodclassofproblems}
\end{proposition}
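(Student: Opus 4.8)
The plan is to transfer the question to the source measure. Since $T_0$ pushes $\mu$ forward to $\nu$, we have $\nu(A) = \mu(T_0^{-1}(A))$, so it suffices to prove that $T_0^{-1}(A)$ is $\lebesgue$-negligible; then $\mu(T_0^{-1}(A)) = 0$ because $\mu \ll \lebesgue$. Here $A$ is Borel since $\alpha$ and $\beta$ are upper semi-continuous by \Cref{lem:ell+uppersemicont} and $\spt(\nu)$ is closed, and I would work with $T_0$ in the form of \Cref{remark:modifications_of_OT_map}, so that $u_0(x) - u_0(T_0(x)) = |x - T_0(x)|$ for \emph{every} $x$; this modification is on a $\mu$-null set, so $(T_0)_\# \mu = \nu$ still holds.

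The heart of the argument is a Lipschitz parametrization of $T_0^{-1}(A)$. First I would write $A = \bigcup_{j \in \mathbb{N}} (A \cap A_j)$, with $A_j$ as in \eqref{eq:Ajdef}: every $z \in A$ has $\min(\alpha(z),\beta(z)) > 0$, hence lies in $A_j$ for $j$ large. Fix $j$ and take $x \in T_0^{-1}(A \cap A_j)$, and set $z := T_0(x) \in A \cap A_j \subset M$. The conditions $\alpha(z) > 0$ and $\beta(z) > 0$ force $z$ to be an interior point of a transport ray: combining a saturating segment below $z$ with a saturating segment above $z$ via the triangle inequality shows these segments are collinear through $z$, and \Cref{lem:affineanddifferentiableoninterior} together with \Cref{lem:two_rays_only_cross_at_endpoints} identifies the enclosing maximal ray uniquely, with $z$ strictly interior; in particular $\nabla u_0(z)$ exists and points along that ray, and by \Cref{prop:Lipschitzgradient} the map $z \mapsto \nabla u_0(z)$ is $4j$-Lipschitz on $A_j$. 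Since $u_0(x) - u_0(z) = |x-z|$, the segment $[z,x]$ also saturates the Lipschitz bound, hence lies on the same ray (again \Cref{lem:two_rays_only_cross_at_endpoints}), on its upper side, so $x = z + t\,\nabla u_0(z)$ with $t = |x-z| \in [0, \diam(\Omega)]$. Therefore $T_0^{-1}(A \cap A_j) \subset \Phi_j\big((A \cap A_j) \times [0,\diam(\Omega)]\big)$, where $\Phi_j(z,t) := z + t\,\nabla u_0(z)$ is Lipschitz.

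It then remains to see this image is $\lebesgue$-null. Since $M$ is a $C^1$ submanifold of dimension $m := \dim(M) \le d-2$, it is covered by countably many charts in each of which $M$ is, on relatively compact pieces, a Lipschitz image of a subset of $\RR^m$; hence $A \cap A_j$ is a countable union of Lipschitz images of subsets of $\RR^m$, $(A \cap A_j) \times [0,\diam(\Omega)]$ is a countable union of Lipschitz images of subsets of $\RR^{m+1}$, and composing with $\Phi_j$ exhibits $\Phi_j\big((A \cap A_j) \times [0,\diam(\Omega)]\big)$ as a countable union of Lipschitz images of subsets of $\RR^{m+1}$ inside $\RR^d$. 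Because $m+1 \le d-1 < d$, each such image has $\lebesgue$-measure zero (a Lipschitz map from a subset of $\RR^{m+1}$ into $\RR^d$ with $m+1 < d$ has null image, e.g. by extending it Lipschitzly to all of $\RR^{m+1}$ and using that Lipschitz maps do not increase Hausdorff dimension). Taking the countable union over $j$ gives $\lebesgue(T_0^{-1}(A)) = 0$, hence $\mu(T_0^{-1}(A)) = 0$ and $\nu(A) = 0$.

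The main obstacle is the middle step: carefully verifying that membership in $A$ — that is, $\alpha(z)>0$ and $\beta(z)>0$ — really does place $z$ in the interior of a unique transport ray on which $x$ also lies, so that the pair $(z,t) = (T_0(x), |x-T_0(x)|)$ is a genuine Lipschitz coordinate for $x$. Everything else is bookkeeping: the pushforward identity, countable covers of a $C^1$ manifold, Lipschitz images of low-dimensional sets being null, and an appeal to \Cref{prop:Lipschitzgradient}. The dimension count $m+1 \le d-1$ is precisely where the codimension-at-least-$2$ hypothesis on $M$ enters.
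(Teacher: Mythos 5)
Your proposal is correct and follows essentially the same route as the paper's proof: reduce to $\mu(T_0^{-1}(A))=0$, decompose $A$ over the sets $A_j$, use \Cref{prop:Lipschitzgradient} to make $(z,t)\mapsto z+t\nabla u_0(z)$ a Lipschitz parametrization of $T_0^{-1}(A\cap A_j)$ via the manifold charts, and conclude by the dimension count $m+1<d$. The only cosmetic difference is that you compose a single map $\Phi_j$ with the chart parametrizations, whereas the paper builds the combined maps $\psi_{ij}$ on chart coordinates directly and extends them by Kirszbraun; the content is identical.
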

\begin{proof}
We will show $\nu(A) = \mu(T_0^{-1}(A)) = 0$ by showing that $T_0^{-1}(A)$ is contained in a set of Lebesgue measure $0$. For $j \in \mathbb{N}$, define
\begin{equation}
M_j := \spt(\nu) \cap A_j.
\end{equation}
recalling the set $A_j$ from \eqref{eq:Ajdef}. It is clear that
\begin{equation*}
A = \bigcup_{j=1}^\infty M_j,
\end{equation*}
and as a result if we show that $\mu(T_0^{-1}(M_j)) = 0$ for all $j$ we will be done. To prove this we will show that $T_0^{-1}(M_j)$ is contained in the image of a Lipschitz map from a Euclidean space with dimension strictly smaller than $d$.

To begin constructing this map, we first observe that via \Cref{prop:Lipschitzgradient} the map $y \mapsto \nabla u_0(y)$ is Lipschitz continuous on $M_j$. Since $M$ is a $C^1$ submanifold of $\RR^d$, there exists an atlas $\{ (U_i, \varphi_i) \}_{i=1}^\infty$, where $\varphi_i: U_i \rightarrow \RR^m$ with $\varphi_i^{-1}$ Lipschitz on $\varphi_i(U_i)$. Set $d_0 = \sup\{|x-y| \mid x, y \in \Omega\}$ and for $j \in \mathbb{N}$ define
\begin{equation*}
\tilde{V}_{ij} = \varphi_i(U_i \cap M_j), \quad V_{ij} = \{ (x,t) \in \RR^m \times \RR \mid  x \in \tilde{V}_{ij}, |t| \leq d_0 \}.
\end{equation*}
Let $\psi_{ij} : V_{ij} \rightarrow \RR^n$ be defined by
\begin{equation*}
\psi_{ij}(x,t) = \varphi_i^{-1}(x) + t \nabla u_0 (\varphi_i^{-1}(x)).
\end{equation*}
The map $\psi_{ij}$ is Lipschitz on $V_{ij}$ since $\varphi_i^{-1}$ is Lipschitz, $\nabla u_0$ is Lipschitz on $M_j$, and $|t|$ is bounded. By the Kirzbraun Theorem we may then extend $\psi_{ij}$ to a Lipschitz map on $\RR^{m+1}$.

We will now prove that
\begin{equation}
T_0^{-1}(M_j) \subset \bigcup_{i=1}^\infty \psi_{ij}(\RR^{m+1}). \label{eq:inclusiontoprove}
\end{equation}
Let $x \in T_0^{-1}(M_j)$. Then $T_0(x) \in M_j \subset M$, so there exists a chart $(U_i, \varphi_i)$ such that $T_0(x) \in U_i\cap M_j$. As such, there exists $z \in \tilde{V}_{ij}$ such that $T_0(x) = \varphi_i^{-1}(z)$. Moreover, since $T_0(x) \in M_j$, we have that $T_0(x)$ is on the interior of a unique transport ray of $u_0$, and via \eqref{eq:T_0_saturates_lip_2} we obtain that $x$ is on the same ray. Since $T_0(x)$ is on the interior of this ray, \Cref{lem:affineanddifferentiableoninterior} shows that $u_0$ is differentiable at $T_0(x)$ with derivative satisfying
\begin{equation*}
    \nabla u_0(T_0(x)) = \frac{x-T_0(x)}{|x-T_0(x)|}
\end{equation*}
provided $x \neq T_0(x)$. Thus, even if $x = T_0(x)$, there exists $t$ with $|t| \leq d_0$ such that
\begin{equation*}
x = T_0(x) + t \nabla u(T_0(x)) = \psi_{ij}(z,t).
\end{equation*}
This shows that $x \in \psi_{ij}(\RR^{m+1})$ for some $i$, and we therefore conclude that \eqref{eq:inclusiontoprove} holds. Note that due to Lipschitz property of $\psi_{ij}$ and the fact that $m+1 < d$,
\begin{equation}
\mathcal{L}_d(\psi_{ij}(\RR^{m+1})) = 0. \label{eq:Lebesgueiszero}
\end{equation}
This is a standard fact (c.f. Proposition 262D \cite{fremlin2000measure}). This confirms that $\mu(T_0^{-1}(A)) = 0$, since
\begin{align*}
\mu(T_0^{-1}(A)) &\leq \sum_{j=1}^\infty \mu(T_0^{-1}(M_j)),\\
&\leq \sum_{j=1}^\infty \sum_{i=1}^\infty \mu(\psi_{ij}(\RR^{m+1})),\\
&= 0,
\end{align*}
where the last line holds via \eqref{eq:Lebesgueiszero} and because $\mu \ll \mathcal{L}_d$.
\end{proof}
Using \Cref{lem:alpha_is_transport_distance} and \Cref{prop:goodclassofproblems} we can now prove \Cref{thm:map_from_potential}.
\begin{proof}[Proof of \Cref{thm:map_from_potential}]
Let $T_0$ be an optimal transport map for $W_1(\mu,\nu)$, which exists when $\mu \ll \lebesgue$ by \cite{ambrosio2003existence}. By \Cref{prop:goodclassofproblems} we have $\nu(A)= 0$, and hence by \Cref{lem:alpha_is_transport_distance} we obtain $\alpha(x) = |x-T_0(x)|$ $\mu$ almost everywhere. Since $x \neq T_0(x)$ $\mu$ almost everywhere, \Cref{lem:graduistransportdirection} implies that, $\mu$ almost surely,
\begin{equation*}
    x - \alpha(x) \nabla u_0(x) = x + |x - T_0(x)| \frac{T_0(x) - x}{|T_0(x) - x|} = T_0(x),
\end{equation*}
which is \eqref{eq:OT_map_from_potential_only}. Since we started with an arbitrary optimal transport map $T_0$ and showed via \eqref{eq:OT_map_from_potential_only} that it is expressible up to $\mu$ negligible sets only in terms of $u_0$, we obtain that $T_0$ is unique up to sets of $\mu$ measure $0$.
\end{proof}
\subsection{Proof of \Cref{thm:guaranteed_reduction_if_lessthanmedian}}
In this section we provide a proof of \Cref{thm:guaranteed_reduction_if_lessthanmedian}. First, however, we will clarify the meaning of the measure $(I-\eta\nabla u_0)_\# \mu$ which we hinted at in \Cref{remark:measurability}.

Recall that the claim is that $(I-\eta \nabla u_0)_\# \mu$ is well defined when $\mu \ll \lebesgue$. First we clarify why this deserves special attention. Since $u_0$ is only Lipschitz, $I-\eta \nabla u_0$ may not be a Borel map. Since $\mu$ is only a Borel measure, the standard definition of the pushforward may not be applicable because the pre-image of a Borel set may be only Lebesgue measurable and thus incompatible with $\mu$. This issue can be easily resolved, however, when $\mu \ll \lebesgue$. Since $I-\eta \nabla u_0$ is measurable, there is a Borel map $f_0$ almost everywhere equal to $I-\eta \nabla u_0$. The pushforward $(f_0)_\# \mu$ is well defined, and since $\mu \ll \lebesgue$ it is independent of the particular choice of $f_0$; this measure is what we mean when we write $(I-\eta \nabla u_0)_\# \mu$. Further, we may obtain samples in practice from $(f_0)_\# \mu$ by sampling $x \sim \mu$ and applying the map $I-\eta \nabla u_0$.

The following simple result provides a more detailed estimate than \Cref{thm:guaranteed_reduction_if_lessthanmedian}, which it proves as an immediate corollary.
\begin{proposition}
\label{thm:detailedguaranteed_reduction_if_lessthanmedian}
Let $\mu \ll \lebesgue$, and let $u_0$ and $T_0$ be a Kantorovich potential and optimal transport map for $W_1(\mu,\nu)$. Let $\tilde{\mu} = (I-\eta \nabla u_0)_\# \mu$. Then
\begin{equation}
 W_1(\tilde{\mu},\nu) \leq W_1(\mu,\nu) - \eta(2\mu(\{x \mid |x-T_0(x)| \geq \eta\}) - 1)  \label{eq:reduction_estimate}
\end{equation} 
In particular, if $\mu(\{ x\in \Omega \mid |x-T_0(x)| \geq \eta\}) > \frac{1}{2}$ and $\eta>0$, then 
\begin{equation*}
    W_1(\tilde{\mu},\nu) < W_1(\mu,\nu).
\end{equation*}
\end{proposition}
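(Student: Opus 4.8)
The plan is to estimate $W_1(\tilde\mu,\nu)$ from above by building an explicit transport plan between $\tilde\mu$ and $\nu$ out of the maps $I-\eta\nabla u_0$ and $T_0$, and then bounding the cost of that plan pointwise in $x$. As a preliminary reduction I would, following \Cref{remark:modifications_of_OT_map}, replace $T_0$ by its modification satisfying $u_0(x)-u_0(T_0(x))=|x-T_0(x)|$ for \emph{all} $x\in\Omega$; since this alters $T_0$ only on a $\mu$-null set, it changes neither $W_1(\mu,\nu)=\int_\Omega|x-T_0(x)|\,d\mu$, nor $W_1(\tilde\mu,\nu)$, nor $\tilde\mu$ (which depends on $u_0$ alone), nor the $\mu$-measure of $S:=\{x\in\Omega\mid |x-T_0(x)|\ge\eta\}$.

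Next I would form the coupling $\pi:=(I-\eta\nabla u_0,\,T_0)_\#\mu$. As discussed around \Cref{remark:measurability}, when $\mu\ll\lebesgue$ the map $I-\eta\nabla u_0$ agrees $\mu$-a.e.\ with a Borel map and $T_0$ is Borel, so $\pi$ is a well-defined Borel measure on $\Omega^2$; its first marginal is $\tilde\mu$ by definition and its second marginal is $(T_0)_\#\mu=\nu$, so $\pi$ is admissible and
\[
W_1(\tilde\mu,\nu)\le\int_\Omega\bigl|(x-\eta\nabla u_0(x))-T_0(x)\bigr|\,d\mu(x).
\]

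The heart of the argument is the pointwise identity for the integrand. Because $u_0$ is Lipschitz and $\mu\ll\lebesgue$, $\nabla u_0(x)$ exists for $\mu$-a.e.\ $x$; on the set where $x\neq T_0(x)$ the segment $[x,T_0(x)]$ lies in a transport ray by the saturation property, so \Cref{lem:affineanddifferentiableoninterior} and \Cref{lem:graduisraydirectionextends} give $\nabla u_0(x)=\tfrac{x-T_0(x)}{|x-T_0(x)|}$ there --- this is essentially \Cref{lem:graduistransportdirection}. Hence for $\mu$-a.e.\ such $x$ one has $(x-\eta\nabla u_0(x))-T_0(x)=\bigl(1-\tfrac{\eta}{|x-T_0(x)|}\bigr)(x-T_0(x))$, so the integrand equals $\bigl\lvert\,|x-T_0(x)|-\eta\,\bigr\rvert$; and on $\{x=T_0(x)\}$ the integrand is $\eta|\nabla u_0(x)|\le\eta$. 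Splitting $\int_\Omega=\int_S+\int_{\Omega\setminus S}$: on $S$ the integrand equals $|x-T_0(x)|-\eta$, while on $\Omega\setminus S$ it is at most $\eta$, hence at most $|x-T_0(x)|+\eta$. Writing $p=\mu(S)$ and recombining the two pieces gives
\[
W_1(\tilde\mu,\nu)\le\int_S\bigl(|x-T_0(x)|-\eta\bigr)d\mu+\int_{\Omega\setminus S}\bigl(|x-T_0(x)|+\eta\bigr)d\mu=W_1(\mu,\nu)-\eta(2p-1),
\]
which is \eqref{eq:reduction_estimate}. Since $\Omega$ is compact, $W_1(\mu,\nu)\le\diam(\Omega)<\infty$, so whenever $p>\tfrac12$ and $\eta>0$ the quantity $\eta(2p-1)$ is strictly positive and the strict inequality $W_1(\tilde\mu,\nu)<W_1(\mu,\nu)$ follows, recovering \Cref{thm:guaranteed_reduction_if_lessthanmedian}.

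The step I expect to need the most care is not an estimate but a bookkeeping one: checking that $\pi$ is a legitimate coupling despite $\nabla u_0$ being only defined a.e.\ (exactly the subtlety isolated in \Cref{remark:measurability}), and confirming that the ray-direction identity $\nabla u_0(x)=\tfrac{x-T_0(x)}{|x-T_0(x)|}$ holds off a genuinely $\mu$-null set --- which needs both $\mu\ll\lebesgue$ (so $u_0$ is differentiable $\mu$-a.e.) and the everywhere-saturation normalization of $T_0$. Once those are in place, the remaining work is the elementary two-piece integral estimate above together with the trivial bound $|\nabla u_0|\le1$ for the points fixed by $T_0$.
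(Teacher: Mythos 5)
Your proposal is correct and follows essentially the same route as the paper: both use the coupling $(I-\eta\nabla u_0,\,T_0)_\#\mu$ (with the Borel-representative caveat of \Cref{remark:measurability}), split the cost integral over $\{|x-T_0(x)|\geq\eta\}$ and its complement, and use the ray-direction identity $\nabla u_0(x)=\frac{x-T_0(x)}{|x-T_0(x)|}$ together with $|\nabla u_0|\leq 1$ to bound the two pieces, yielding exactly \eqref{eq:reduction_estimate}. Your pointwise evaluation of the integrand as $\bigl\lvert\,|x-T_0(x)|-\eta\,\bigr\rvert$ is a slightly sharper bookkeeping of the same estimate, not a different argument.
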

\begin{proof}
Let $f_0$ be a Borel map almost everywhere equal to $I-\eta \nabla u_0$. Then the measure $(f_0, T_0)_\#\mu$ is an admissible transport plan for $W_1(\tilde{\mu},\nu)$. By the definition of $W_1(\tilde{\mu},\nu)$, (see Section 5.1 of \cite{santambrogio2015optimal} for details), we have 
\begin{equation*}
    W_1(\tilde{\mu},\nu) \leq \int_\Omega |f_0(x) - T_0(x)| d\mu(x).
\end{equation*}
Estimating this integral, we obtain
\begin{align*}
    W_1(\tilde{\mu},\nu) &\leq \int_{|x-T_0(x)|< \eta}|f_0(x) - T_0(x)| d\mu(x) + \int_{|x-T_0(x)|\geq \eta}|f_0(x) - T_0(x)| d\mu(x),\\
    &= \int_{|x-T_0(x)|<\eta}|x-\eta \nabla u_0(x) -T_0(x)|d\mu(x) + \int_{|x-T_0(x)|\geq \eta}|x-\eta\nabla u_0(x) - T_0(x)| d\mu(x),\\
    &\leq \int_{|x-T_0(x)|<\eta}|x- T_0(x)|d\mu(x) + \eta \mu(\{x\mid |x-T_0(x)|<\eta\}) \\
    &\quad + \int_{|x-T_0(x)|\geq \eta}|x- T_0(x)| d\mu(x) - \eta \mu(\{x\mid |x-T_0(x)|\geq\eta\}),\\
    &= W_1(\mu,\nu) - \eta(2\mu(\{x \mid |x-T_0(x)| \geq \eta\}) - 1), 
\end{align*}
which is \eqref{eq:reduction_estimate}. Note that in the second inequality we have used the fact that $u_0 \in \onelip$ and \Cref{lem:graduistransportdirection}.
\end{proof}
\subsection{Proof of \Cref{prop:advregis1ttc}}
In this section we will prove \Cref{prop:advregis1ttc}, which links the denoising method from \cite{lunz2018adversarial} and gradient descent on a Kantorovich potential $u_0$.
\begin{proof}[Proof of \Cref{prop:advregis1ttc}]
Assume $\eta>0$; the result is trivial if $\eta=0$. Since $u_0 \in \onelip$, the minimal value of \eqref{prob:advreg} is bounded below by
\begin{equation}
    \min_{x\in \RR^d} \frac{1}{2}|x-x_0|^2 - \eta|x-x_0| + \eta u_0(x_0) = \eta u_0(x_0) - \frac{1}{2}\eta^2.
\end{equation}
The equality above follows by minimizing the one-dimensional function $z \mapsto \frac{1}{2}z^2 - \eta z$ over non-negative $z$, which has minimizer $z = \eta$. By assumption, for $\mu$ almost all $x_0$ we have $|x_0-T_0(x_0)| \geq \eta$, and the segment $[x_0, T_0(x_0)]$ is contained in a transport ray of $u_0$. In addition, for $\mu$ almost all $x_0$  \Cref{lem:graduistransportdirection} gives us that $|\nabla u_0(x_0)| = 1$. Thus, by \Cref{lem:affineanddifferentiableoninterior}, we get
\begin{align}
    \frac{1}{2}|\eta \nabla u_0(x_0)|^2 + \eta u_0(x_0 - \eta \nabla u_0(x_0)) &= \eta u_0(x_0) - \frac{1}{2}\eta^2.
\end{align}
Noting that $x_0 - \eta \nabla u_0(x_0) \in \Omega$ by convexity of $\Omega$, we get that $x_0 - \eta \nabla u_0(x_0)$ obtains the minimal value of \eqref{prob:advreg}. For uniqueness, observe that any minimizer $x^*$ distinct from $x_0 - \eta \nabla u_0(x_0)$ must not be equal to $x_0$ and satisfies
\begin{equation}
    u_0(x^*) = u_0(x_0) - |x^*-x_0|.
\end{equation}
Thus $x_0$ must exist at the intersection of at least two transport rays. The set of $x_0$ for which this can occur is negligible since it is contained in the set where $u_0$ is not differentiable, completing the proof.  
\end{proof}

\section{Experimental settings and additional results}
\label{sec:exp_appendix}

The code we created to run all the experiments presented in Section \ref{sec:experiments} is available on Github. Click \textcolor{blue}{\href{https://github.com/bilocq/Trust-the-Critics-2}{here}} to access the code used for the TTC experiments as well as the benchmark denoising experiments with adversarial regularization. Click \textcolor{blue}{\href{https://github.com/bilocq/wgan-gp-benchmark}{here}} to access the code for the benchmark generation experiments with WGAN-GP.

\subsection{Hyperparameters and computational resources}
\label{subsec:hyperparameters_appendix}
The hyperparameters and computational resources we used for all the experiments with our TTC algorithm are listed in Table \ref{table:ttc_hyperparameters}. Complementary information for the benchmark generation experiments with WGAN-GP is included in Table \ref{table:wgan_hyperparameters}. For all the generation experiments with TTC and WGAN-GP, we evaluated FID by comparing the full test datasets to either 10000 (MNIST and F-MNIST) or 3000 (CelebaHQ) generated samples -- these sample sizes match the sizes of the corresponding test datasets. For all of the denoising experiments with TTC and adversarial regularization, we evaluated PSNR separately on 200 test images and reported the mean and standard deviations of the results in Table \ref{table:denoisingresults}. For each of the benchmark denoising experiments with the adversarial regularization technique from \cite{lunz2018adversarial} -- corresponding to noise levels $\sigma = 0.1$, $0.15$, $0.2$ -- we used the first critic trained for the corresponding TTC denoising experiment, which took approximately 10 minutes to train, along with 200 gradient descent steps and a step size parameter of $0.05$ to solve (\ref{prob:advreg}).

\begin{table*}[h!]
\centering
\begin{tabular}{c|ccccc}
  & \makecell{\textbf{Denoising} \\ (All noise levels)} & \makecell{\textbf{Generation} \\ (MNIST / \\ F-MNIST)} & \makecell{\textbf{Generation} \\ (CelebaHQ)} & \textbf{Translation} & \textbf{Deblurring} \\
  \hline \\[-1ex]
  
  \makecell{$N$ \\ (Number of steps)} & 20 & 40 & 45 & 20 & 20  \\
  \hline \\[-1ex]
  
  \makecell{$J$ \\ (Steps where \\ critic is trained)} & \small{$\{0, 1, \ \dots \ , 19 \}$} & \makecell{\small{$\{0, 2, \ \dots \ , 18 \}$} \\ \small{$\cup \ \{19, 20, \ \dots \ , 39 \}$ }}  & \makecell{\small{$\{0, 2, \ \dots \ , 18 \}$} \\ \small{$\cup \ \{19, 20, \ \dots \ , 44 \}$ }} & \small{$\{0, 1, \ \dots \ , 19 \}$} & \small{$\{0, 1, \ \dots \ , 19 \}$}  \\
  \hline \\[-1ex]
  
  \makecell{$M$ \\ (Minibatch size)} & 32 & 128 & 32 & 32 & 32 \\
  \hline \\[-1ex]
  
  \makecell{$C$ \\ (Training iterations \\ per critic)} & 2500 & 1000 & 2500 & 2500 & 2500 \\
  \hline \\[-1ex]

  \makecell{$\lambda$ \\ (Gradient penalty \\ weight)} & 1000 & 1000 & 1000 & 1000 & 1000 \\
  \hline \\[-1ex]

  Learning rate & $0.0001$ & $0.0001$ & $0.0001$ & $0.0001$ & $0.0001$ \\
  \hline \\[-1ex]

  \makecell{Beta parameters \\ (Adam optimizer)} & $(0.5, \ 0.999)$ & $(0.5, \ 0.999)$ & $(0.5, \ 0.999)$ & $(0.5, \ 0.999)$ & $(0.5, \ 0.999)$ \\
  \hline \\[-1ex]

  Architecture & AR-Net & InfoGAN & SNDCGAN & SNDCGAN & SNDCGAN \\
  \hline \\[-1ex]
  
  \makecell{Image size \\ (Color channels $\times$ \\ height $\times$ width)} & \small{$3 \times 128 \times 128$} & \small{$1 \times 32 \times 32$} & \small{$3 \times 128 \times 128$} & \small{$3 \times 128 \times 128$} & \small{$3 \times 128 \times 128$} \\
  \hline \\[-1ex]

  \makecell{Training \\ dataset size} & 200 -- BSDS & 50000 & 27000  & \makecell{ 6287 -- Photo \\ 1072 -- Monet} & 200 -- BSDS \\
  \hline \\[-1ex] 

  \makecell{Testing \\ dataset size} & \makecell{200 -- BSDS \\ (PSNR)} & \makecell{10000 \\ (FID)} & \makecell{3000 \\ (FID)} & -- & -- \\
  \hline \\[-1ex] 

  \makecell{Total \\ training time \\ (minutes)} & 330 & 90 & 2030 & 630 & 740 \\
  \hline \\[-1ex] 

  GPU type & V100 & P100 & V100 & V100 & V100 \\
\end{tabular}
\caption{Hyperparameters and computational resources used for all TTC experiments, with notation referring to Algorithm \ref{alg:TTC}. We refer to the convolutional neural network architecture used for adversarial regularization in \cite{lunz2018adversarial} as AR-Net. The BSDS500 training and testing datasets used for the denoising and deblurring experiments contain 200 images each, but we applied data augmentation by taking random $128 \times 128$ crops of the images in these datasets. We did the same for the Monet dataset -- but not the Photo dataset -- in the translation experiment.}
\label{table:ttc_hyperparameters}
\end{table*}

\begin{table*}
\centering
\begin{tabular}{c|ccccc}
  &  MNIST / F-MNIST & CelebaHQ \\
  \hline \\[-1ex]
  
  Minibatch size & 128 & 32 \\
  \hline \\[-1ex]

  Training iterations & 50000 & 50000 \\
  \hline \\[-1ex]

  \makecell{$\lambda$ \\ (Gradient penalty \\ weight)} & 10 & 10 \\
  \hline \\[-1ex]

  \makecell{Generator \\ learning rate} & $0.001$ & $0.0005$ \\
  \hline \\[-1ex]

  \makecell{Critic \\ learning rate} & $0.0001$ & $0.0001$ \\
  \hline \\[-1ex]

  \makecell{Total \\ training time \\ (minutes)} & 250 & 1450
\end{tabular}
\caption{Hyperparameters and computational resources used for the WGAN-GP benchmark generation experiments. All unspecified hyperparameters, as well as the GPU models, are the same as for the corresponding TTC experiments in Table \ref{table:ttc_hyperparameters}. We use generator architectures matching the critic architectures, i.e. InfoGAN for MNIST / F-MNIST and SNDCGAN for CelebaHQ.}
\label{table:wgan_hyperparameters}
\end{table*}

\subsection{TTC performance versus training time}
\label{sec:perfomance_vs_time_appendix}
Figure \ref{fig:psnr_vs_time} contains a graph of the average PSNR values obtained over 200 test images with TTC at various points in training for the denoising experiments with noise standard deviations $\sigma = 0.1$, $0.15$, $0.2$. The error bars in the graph represent the standard deviations of the PSNR values over the test images. Each data point in the graph corresponds to TTC trained for denoising as reported in Table \ref{table:ttc_hyperparameters}, but with $N = 0, 1, \ \dots \ , 19$ -- in particular, each critic is trained for $2500$ iterations with minibatch size of $32$. Note that the values reported in this graph may differ slightly from those in Table \ref{table:denoisingresults}, because they were obtained over different sets of test images (recall that test images are obtained from the BSDS500 test dataset by taking $128 \times 128$ random crops). The graph shows that the improvement of TTC is marginal after around a third of the total training time, i.e. after training critics for approximately the first 10 TTC steps. The average PSNR obtained with TTC surpassed that obtained with the benchmark adversarial regularization method from \cite{lunz2018adversarial} after 3 steps for $\sigma = 0.15$ and $\sigma = 0.2$, and after 4 steps for $\sigma = 0.1$.

Figure \ref{fig:fid_vs_time} shows graphs of the FID performance plotted with respect to training time for all of the TTC and WGAN-GP generation experiments. The best FID values attained in each of these experiments are reported in Table \ref{table:TTCvsWGANGP}. The MNIST and F-MNIST experiments were run using NVIDIA P100 GPUs and the CelebaHQ experiments were run with NVIDIA V100 GPUs. We note that TTC more often than not outperforms WGAN-GP for equal training time. One exception to this is that WGAN-GP tends to do better very early on in training when only a few TTC steps have been taken. Another is that TTC and WGAN-GP perform about equally well after around 600 minutes on training on CelebaHQ. A singular feature of the FID plot for MNIST generation with TTC is a sharp but short lived increase in FID early in training -- this features stands out from the otherwise almost monotonically decreasing FID values obtained with TTC on all three datasets. This short increase in FID happened when taking the fifteenth and sixteenth TTC steps, which both used the same critic (i.e. there was no critic training at the sixteenth step), and was quickly corrected by subsequent steps. We have no clear explanation for this, but it may be due to failed training of the critic used for these steps. Aside from this anomaly, the FID values obtained with TTC are slightly more stable than those obtained with WGAN-GP for all three datasets.

\begin{figure*}[h!]
\centering
    \includegraphics[width = 0.8 \textwidth, clip = True]{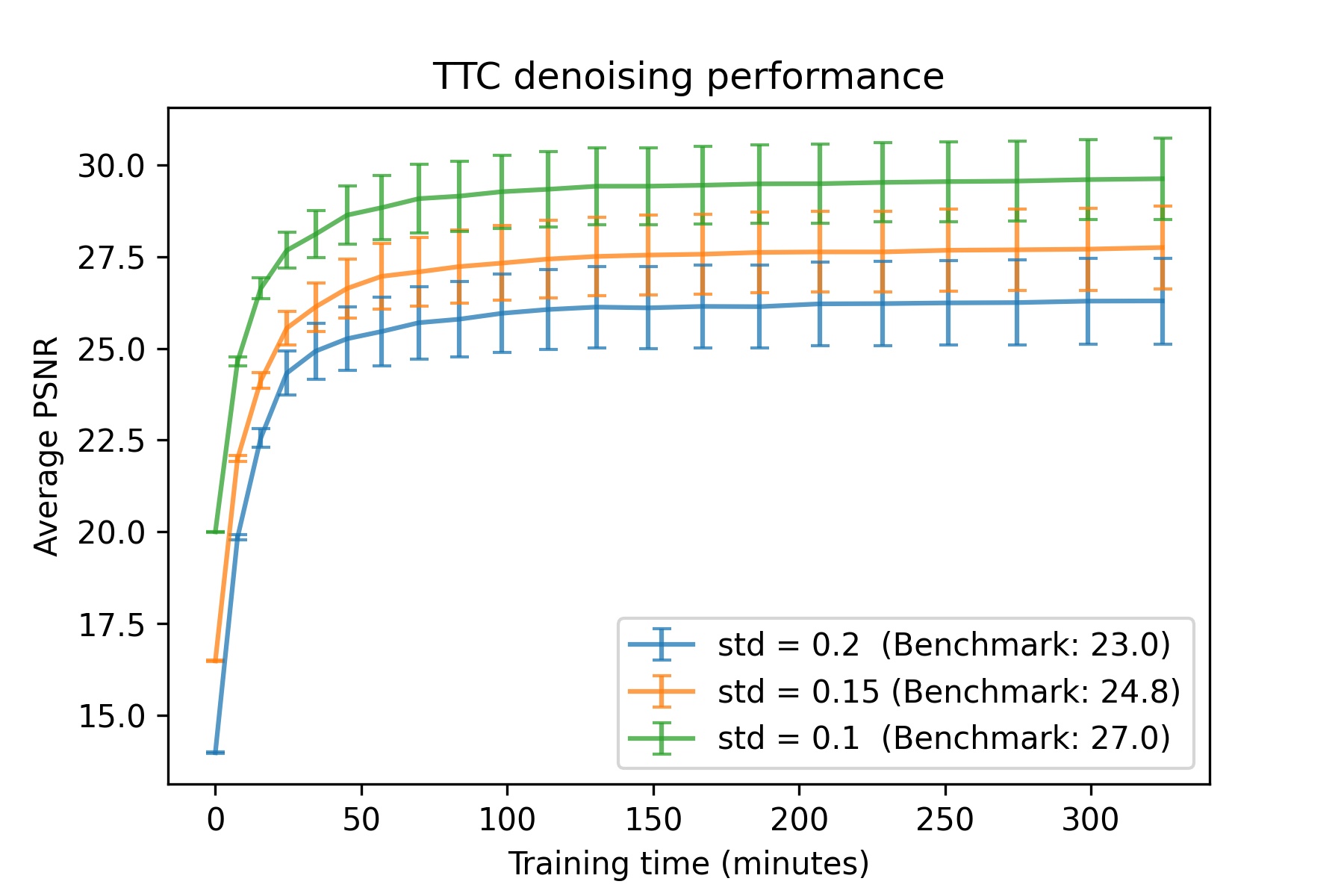}
    \caption{Average and standard deviation of the PSNR values obtained with TTC over a test dataset of 200 images, plotted against training time. The three plots correspond to the denoising experiments with noise standard deviations of $\sigma = 0.1$, $0.15$, $0.2$. The PSNR values at time $0$ correspond to the noisy images in the test dataset. The average PSNR values obtained with the adversarial regularization benchmark method are included in the legend. See the discussion in Section \ref{sec:perfomance_vs_time_appendix}.}
    \label{fig:psnr_vs_time}
\end{figure*}

\begin{figure*}[h!]
    \centering
    \includegraphics[width = 0.6 \textwidth, clip = True]{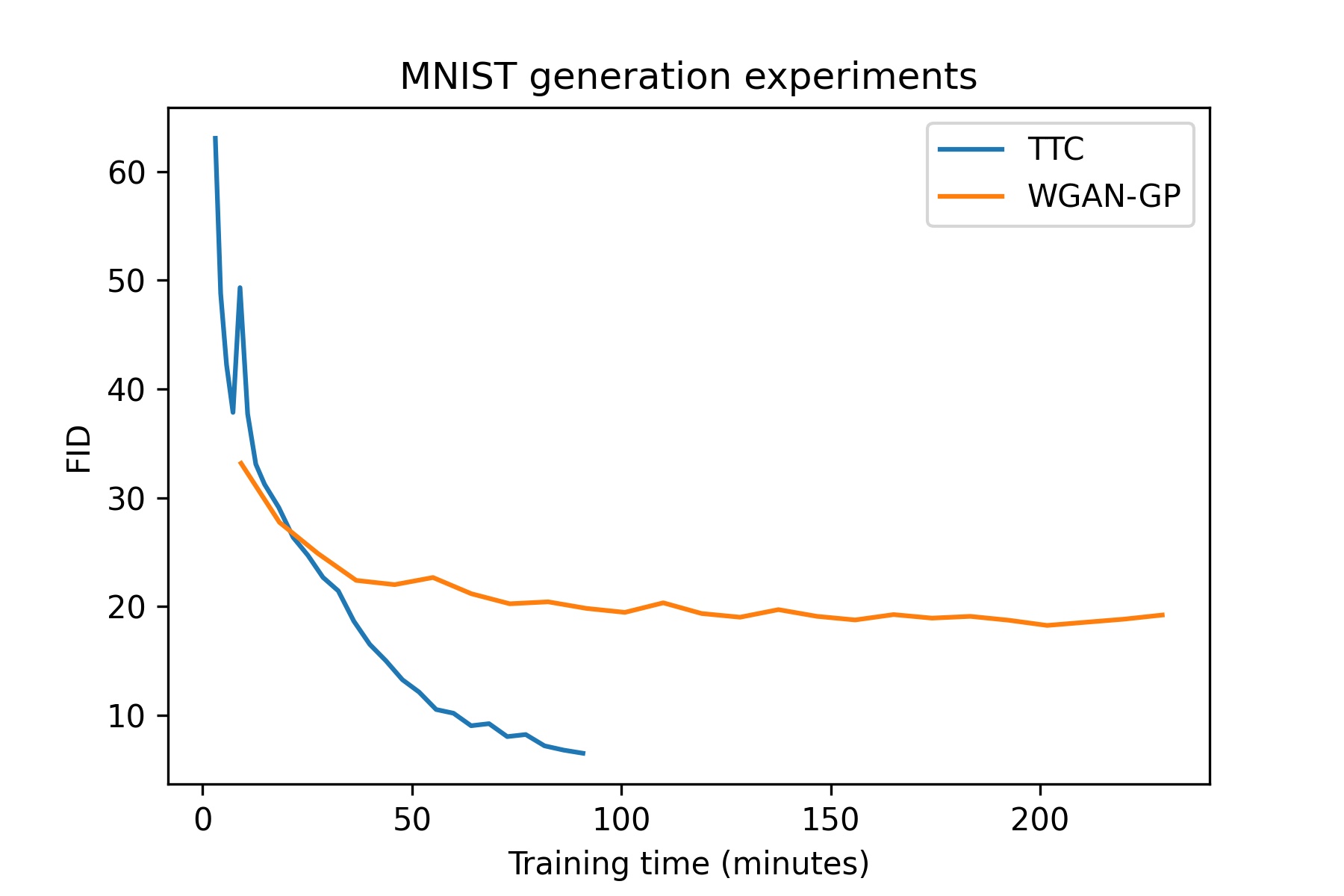}
    \includegraphics[width = 0.6 \textwidth, clip = True]{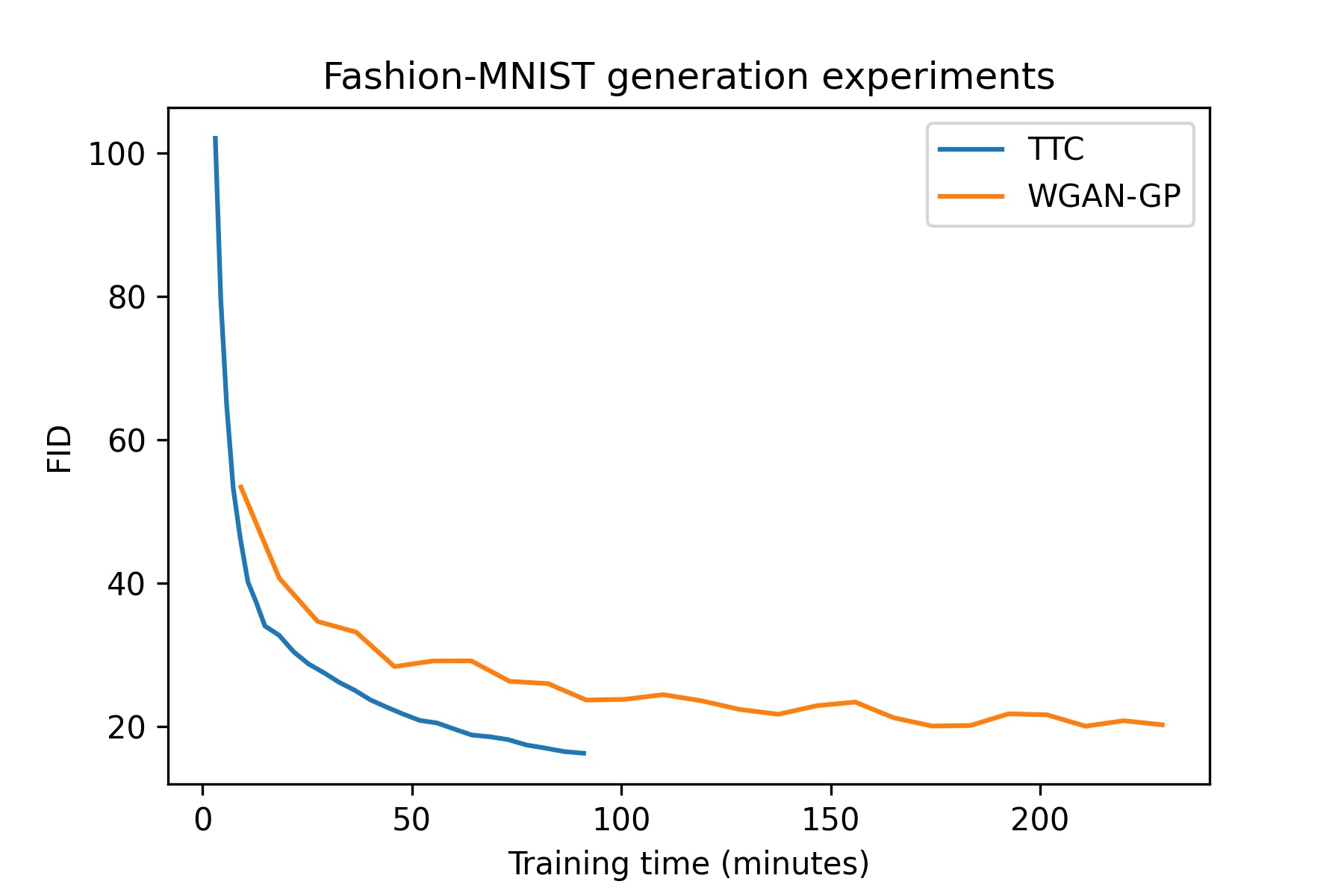}
    \includegraphics[width = 0.6 \textwidth, clip = True]{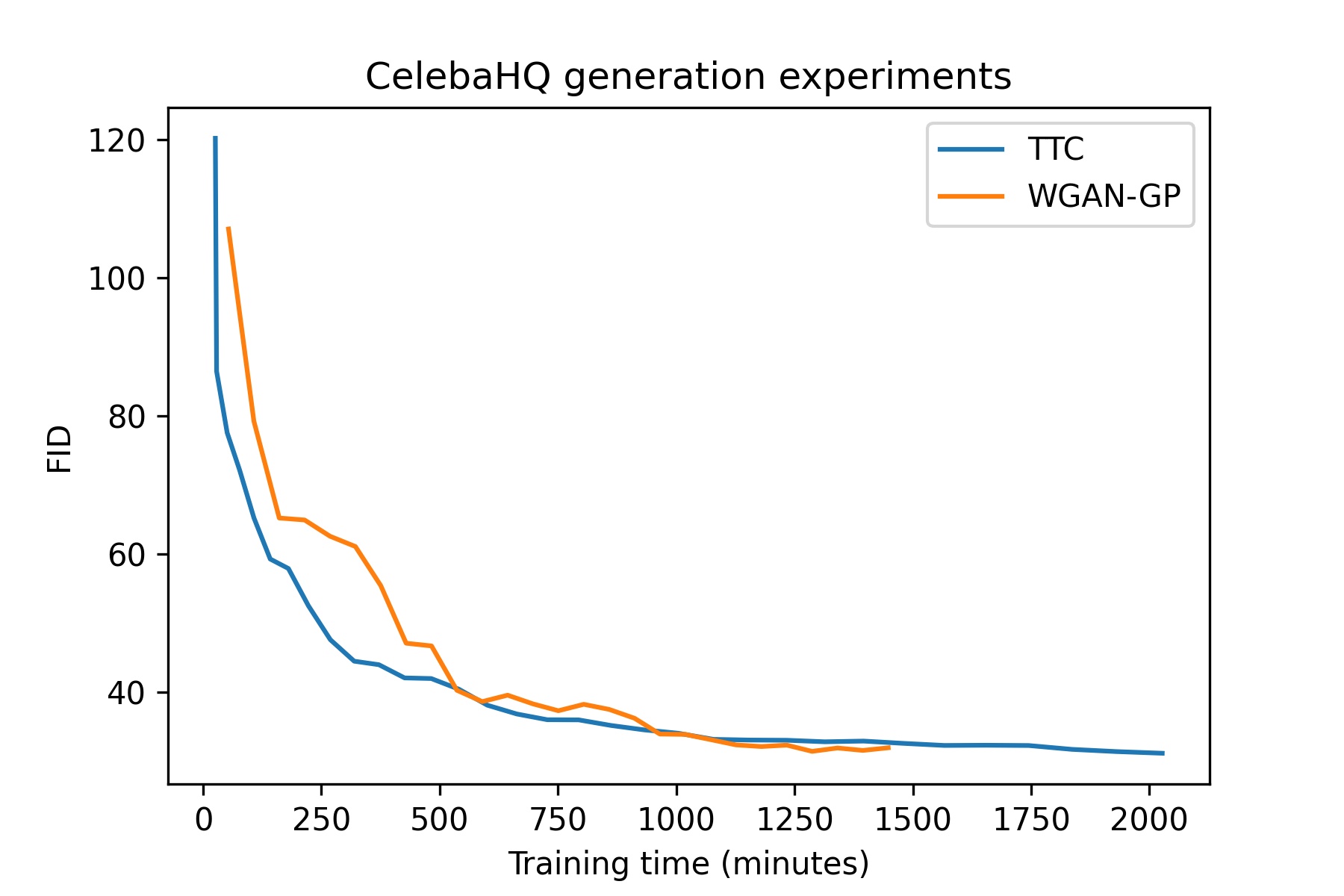}
    \caption{FID performance plotted against training time for the TTC and WGAN-GP generation experiments on all three datasets. See the discussion in Section \ref{sec:perfomance_vs_time_appendix}.}
    \label{fig:fid_vs_time}
\end{figure*}

\clearpage
\subsection{Additional denoising results}
\label{sec:denoising_appendix}

Figure \ref{fig:more_denoising} includes an enlarged version of the image from Figure \ref{fig:denoising}, as well as three additional examples of denoising comparing the benchmark method from \cite{lunz2018adversarial} to TTC. The PSNR values corresponding to these images are reported in Table \ref{table:moredenoisingresults}. The first two images in Figure \ref{fig:more_denoising} were obtained from the denoising experiments where the noise standard deviations were $\sigma = 0.1$ and $\sigma = 0.15$, respectively, whereas the last two images are both from the denoising experiment with $\sigma = 0.2$. The results of all these experiments are stated in Table \ref{table:denoisingresults}. 

\begin{figure*}[h!]
    \centering
    \includegraphics[width = 0.9 \textwidth, clip = True]{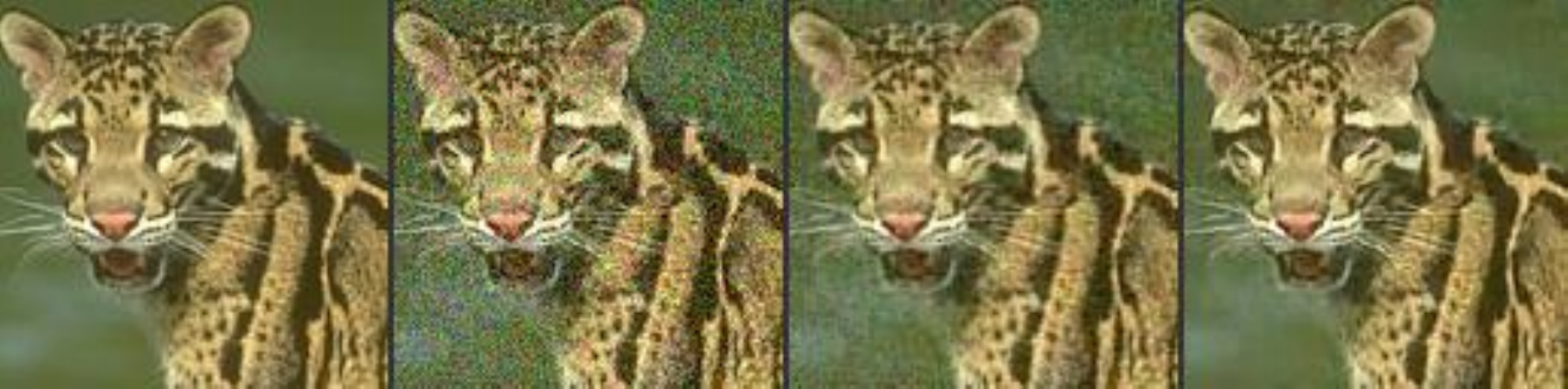} \\[0.2cm]
    \includegraphics[width = 0.9 \textwidth, clip = True]{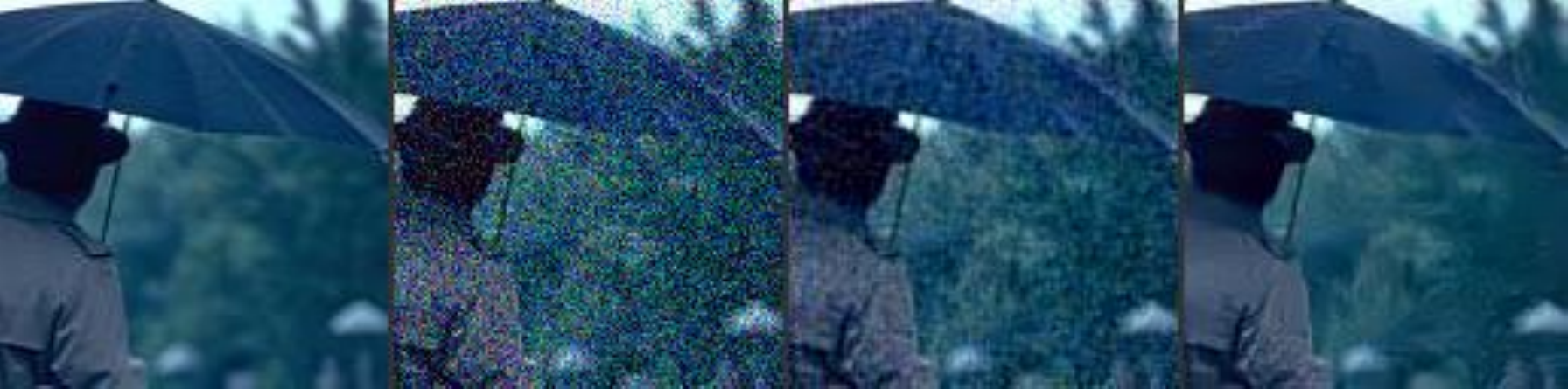} \\[0.2cm]
    \includegraphics[width = 0.9 \textwidth, clip = True]{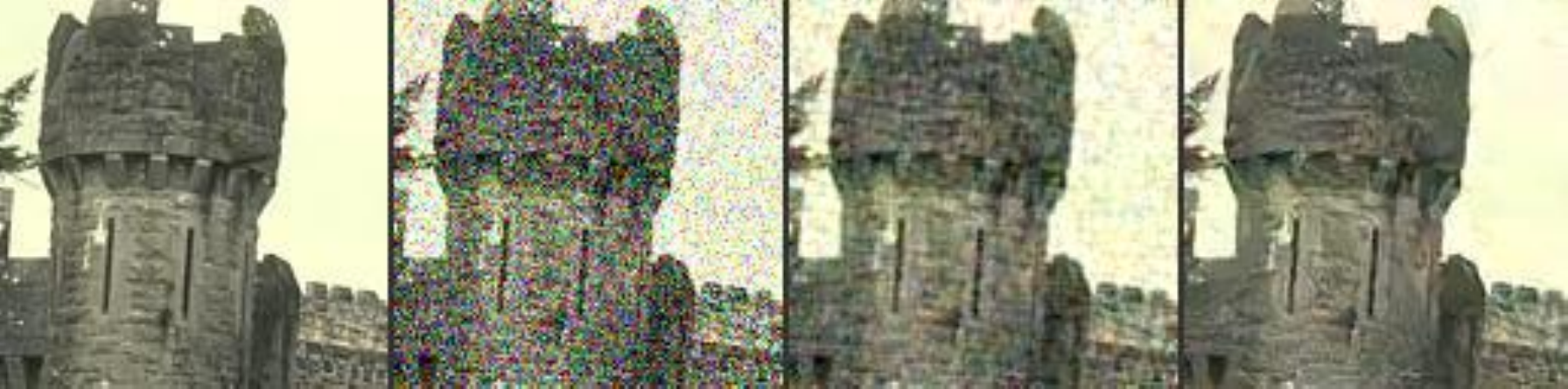} \\[0.2cm]
    \includegraphics[width = 0.9 \textwidth, clip = True]{figs/denoising_examples/denoised_reptile_large_sigma.pdf} 
    \caption{Additional denoising examples on $128 \times 128$ crops of BSDS500 test images. From left to right: original image, noisy image, image restored using the benchmark method from \cite{lunz2018adversarial}, and image restored using TTC. The top row was obtained from the denoising experiment with noise standard deviation $\sigma = 0.1$, the second rwo corresponds to $\sigma = 0.15$ and the last two rows to $\sigma = 0.2$. The PSNR values for all images are included in Table \ref{table:moredenoisingresults}.}
    \label{fig:more_denoising}
\end{figure*}

\begin{table*}[h!]
\centering
\begin{tabular}{c|cccc} 
\multicolumn{1}{l}{} &
\multicolumn{3}{c}{PSNR (dB)} \\ 
 & \makecell{$\sigma$} & Noisy Image & Adv. reg. & TTC \\ 
 \hline \\[-1ex]
Feline & $0.1$ & $20.0$ &  $26.4$ & $ \mathbf{28.1}$ \\
Umbrella & $0.15$ & $16.5$ & $25.7$ & $\mathbf{29.8}$ \\
Tower & $0.2$ &$14.0$ & $22.8$ & $\mathbf{25.1}$ \\
Reptile & $0.2$ &$14.0$ & $21.8$ & $\mathbf{23.5}$
\end{tabular}
\caption{PSNR values for the images in \Cref{fig:more_denoising}.}
\label{table:moredenoisingresults}
\end{table*}

\newpage
\subsection{Additional generated samples}
\label{sec:generation_appendix}
Figures \ref{fig:generated_mnist}, \ref{fig:generated_fashion} and \ref{fig:generated_celebahq} display uncurated generated samples from the benchmark WGAN-GP (top) and TTC (bottom) experiments described in Section \ref{sec:ttcvswgangp} for MNIST, F-MNIST and CelebaHQ, respectively. The FID values obtained during these experiments are reported in Table \ref{table:ttc_hyperparameters}.



\begin{figure*}[h]
    \centering
    \includegraphics[width = 1.0 \textwidth, clip = True]{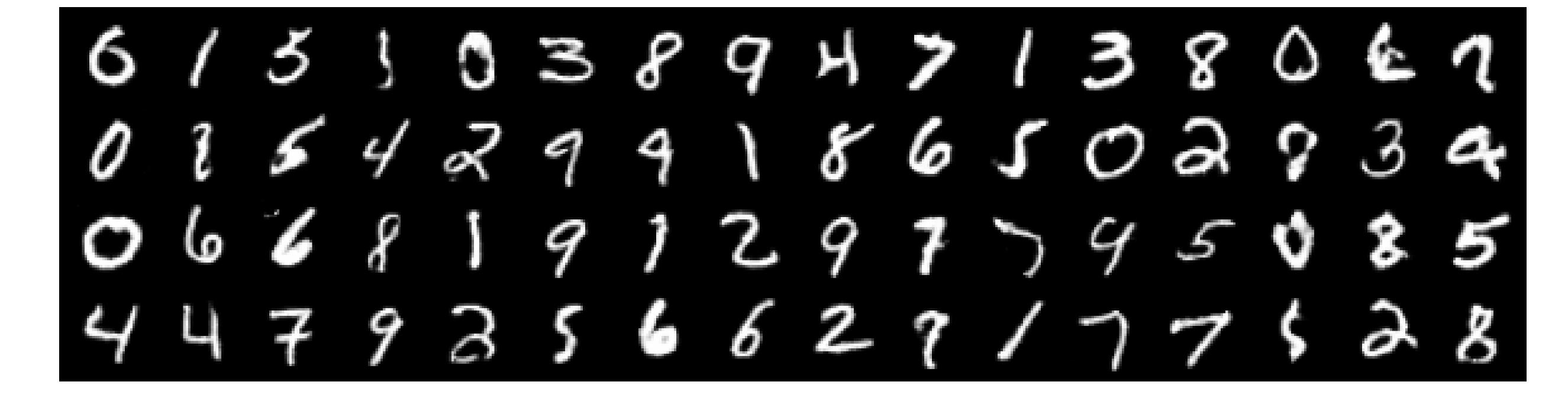} \\[0.2cm]
    \includegraphics[width = 1.0 \textwidth, clip = True]{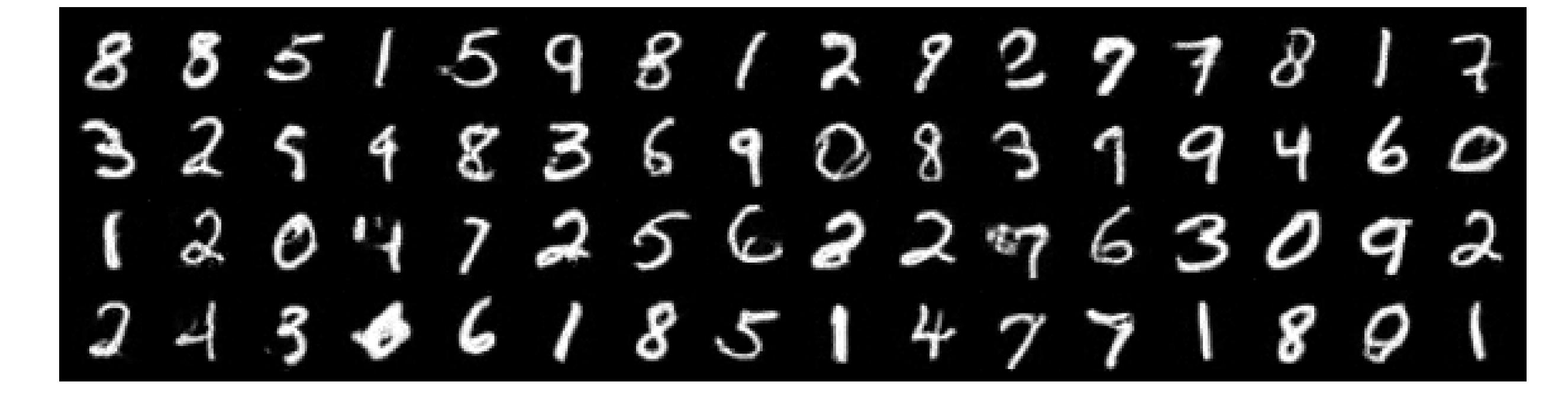} \\[0.2cm]
    \caption{Generated samples from WGAN-GP (top, FID $18.3$) and TTC (bottom, FID $6.5$) trained on MNIST as described in Section \ref{sec:ttcvswgangp}.}
    \label{fig:generated_mnist}
\end{figure*}

\begin{figure*}[h]
    \centering
    \includegraphics[width = 1.0 \textwidth, clip = True]{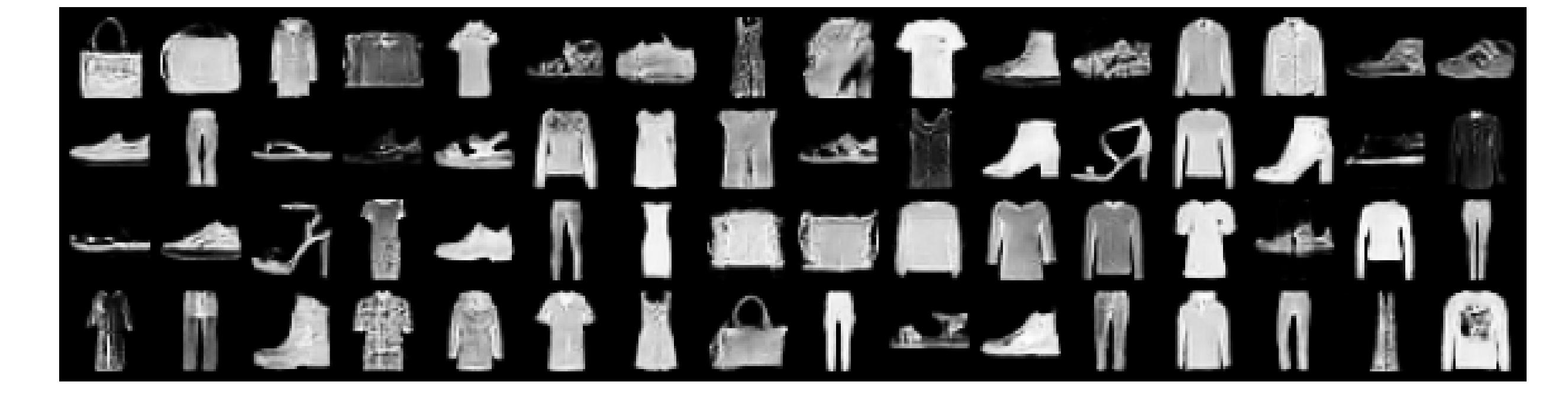}
    \includegraphics[width = 1.0 \textwidth, clip = True]{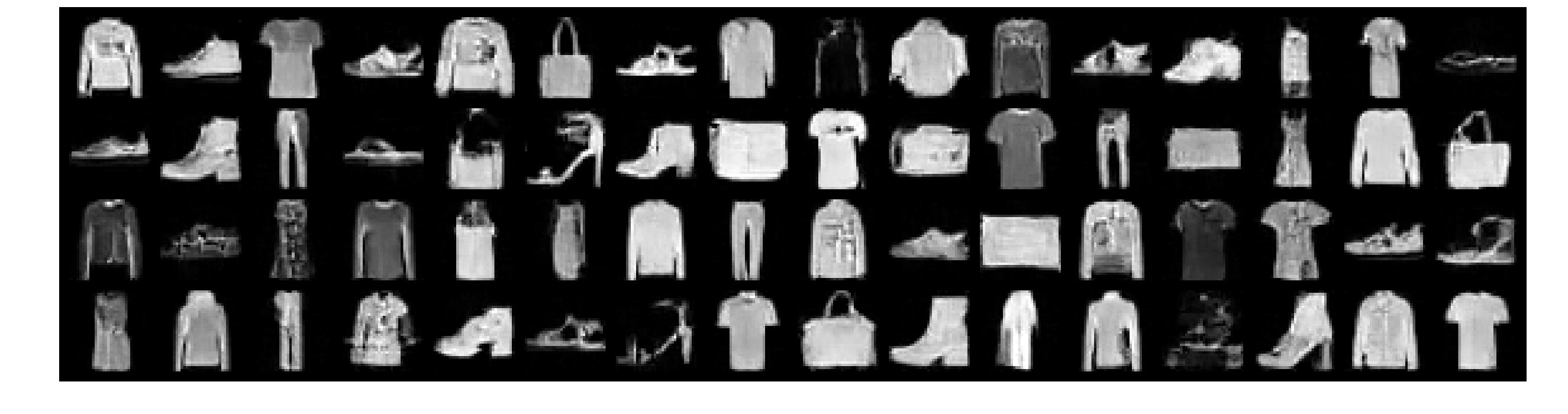} 
    \caption{Generated samples from WGAN-GP (top, FID $20.1$) and TTC (bottom, FID $16.3$) trained on Fashion-MNIST as described in Section \ref{sec:ttcvswgangp}.}
    \label{fig:generated_fashion}
\end{figure*}

\begin{figure*}[h]
    \centering
    \includegraphics[width = 1.0 \textwidth, clip = True]{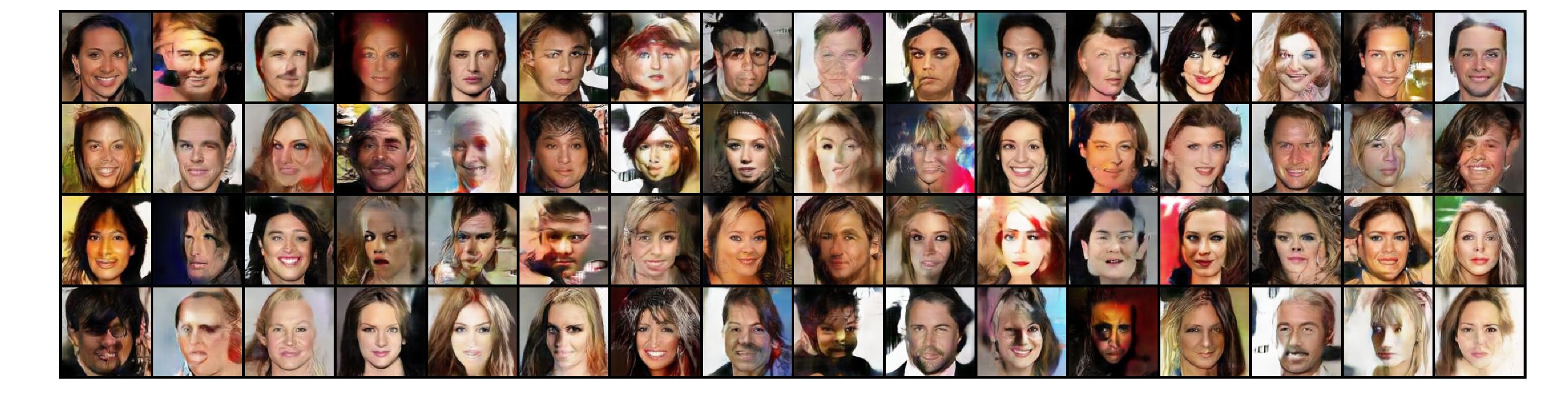} 
    \includegraphics[width = 1.0 \textwidth, clip = True]{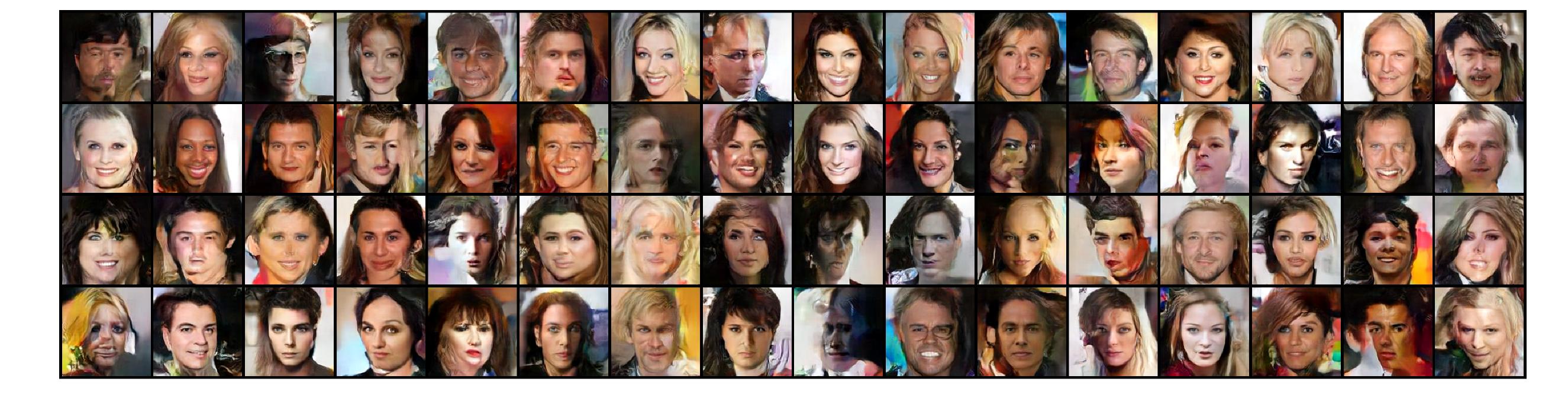} 
    \caption{Generated samples from WGAN-GP (top, FID $31.4$) and TTC (bottom, FID $31.2$) trained on CelebaHQ as described in Section \ref{sec:ttcvswgangp}.}
    \label{fig:generated_celebahq}
\end{figure*}


\clearpage
\subsection{Additional translation and deblurring images}
\label{app:translation}
Figure \ref{fig:more_deblurred} contains additional examples of deblurring with TTC. As for the image in Figure \ref{fig:deblurringexample}, the first two rows of Figure \ref{fig:more_deblurred} were obtained from a deblurring experiment where TTC was trained to reverse the effect of  a $5 \times 5$ Gaussian blurring filter with a standard deviation of $\sigma = 2$. The last two rows of Figure \ref{fig:more_deblurred} come from a deblurring experiment with a $5 \times 5$ Gaussian blurring filter witha standard deviation of $\sigma = 1$.

Figure \ref{fig:more_monets} contains additional examples of real world images being translated into Monet paintings, obtained from the TTC translation experiment described in Section \ref{sec:translation}.

\begin{figure*}
    \centering
    \includegraphics[width = 0.6 \textwidth, clip = True]{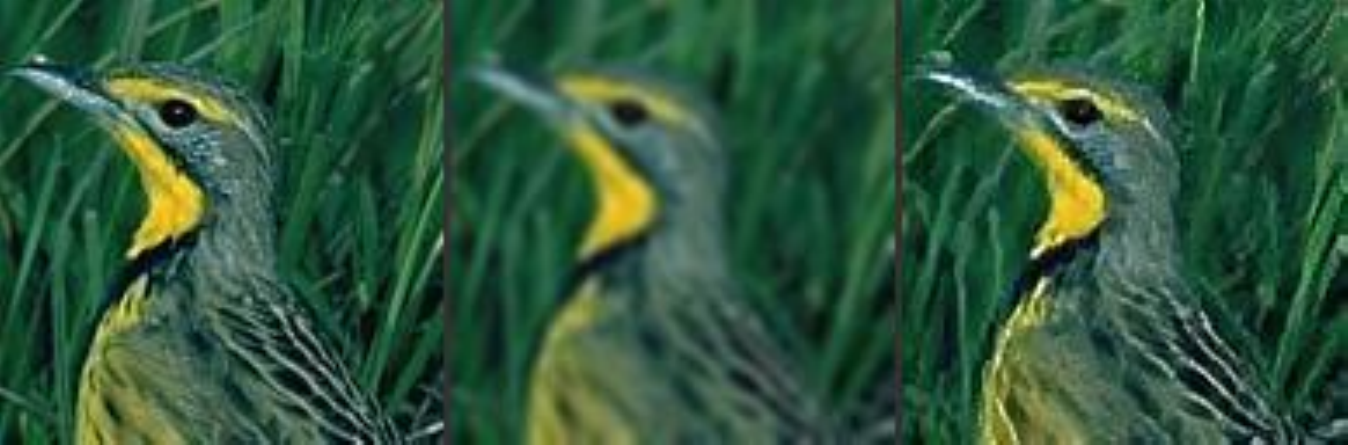} \\[0.2cm]
    \includegraphics[width = 0.6 \textwidth, clip = True]{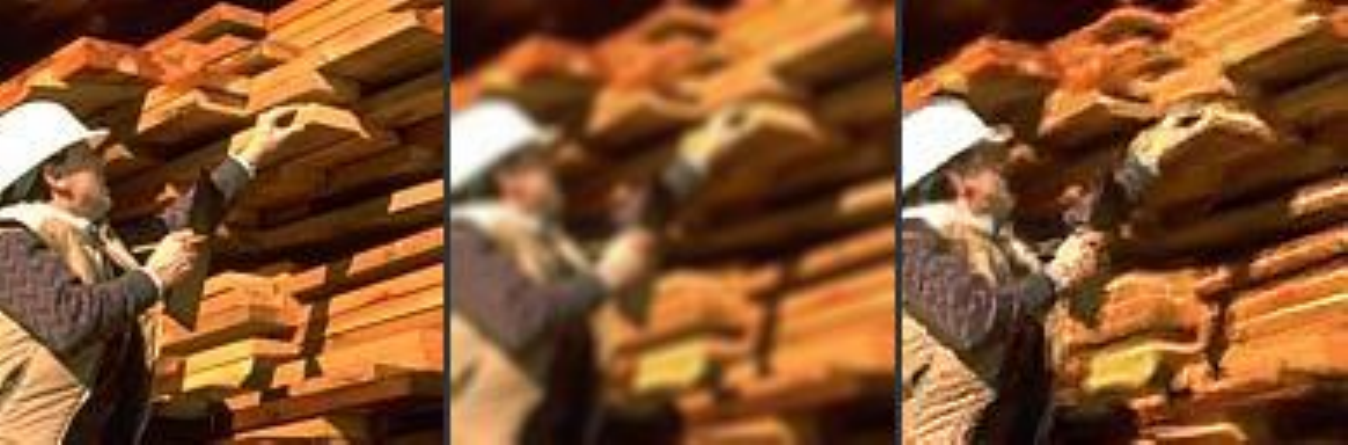} \\[0.2cm]
    \includegraphics[width = 0.6 \textwidth, clip = True]{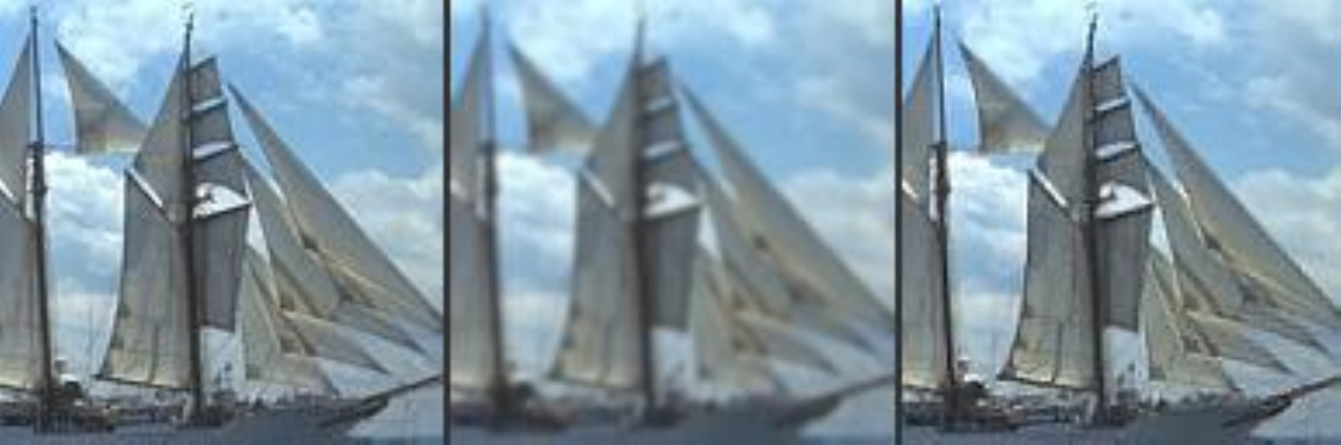} \\[0.2cm]
    \includegraphics[width = 0.6 \textwidth, clip = True]{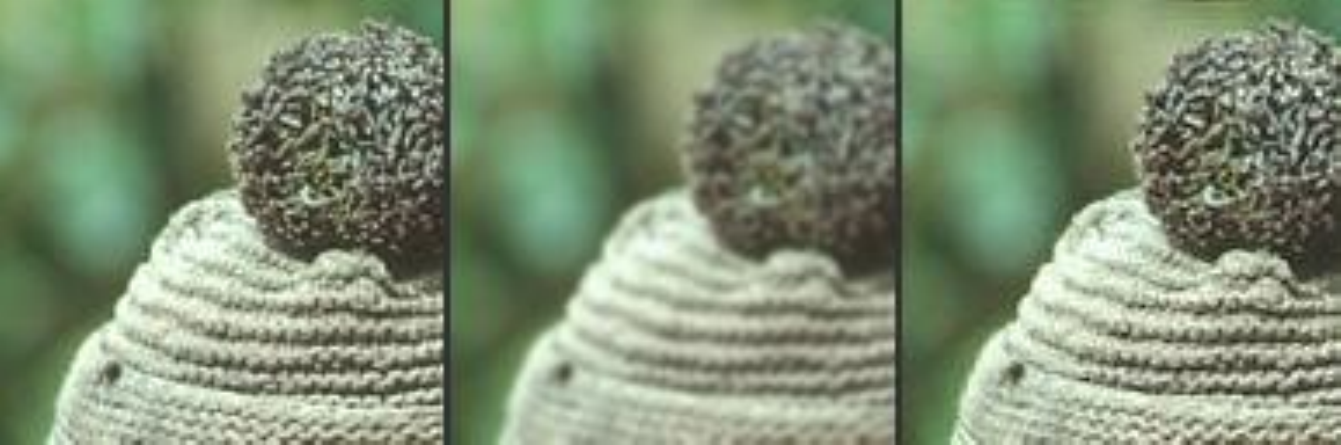}
    \caption{Additional examples of deblurring with TTC applied to $128 \times 128$ crops of BSDS500 test images. From left to right: original image, blurred image and image restored using TTC. The top two rows were obtained from an experiment where the blurring was done with a $5 \times 5$ Gaussian blurring filter with standard deviation $\sigma = 2$, whereas the bottom two rows come from an experiment with a $5 \times 5$ Gaussian blurring filter with $\sigma = 1$.}
    \label{fig:more_deblurred}
\end{figure*}

\begin{figure*}
    \centering
    \includegraphics[width = 0.6 \textwidth, clip = True]{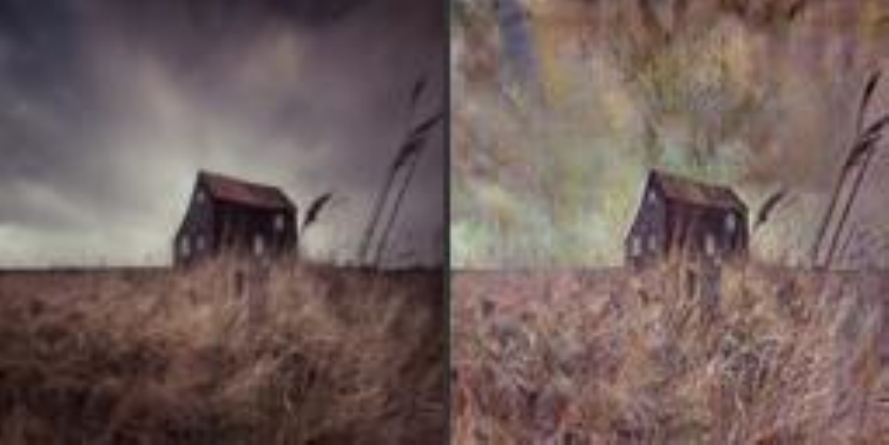} \\[0.2cm]
    \includegraphics[width = 0.6 \textwidth, clip = True]{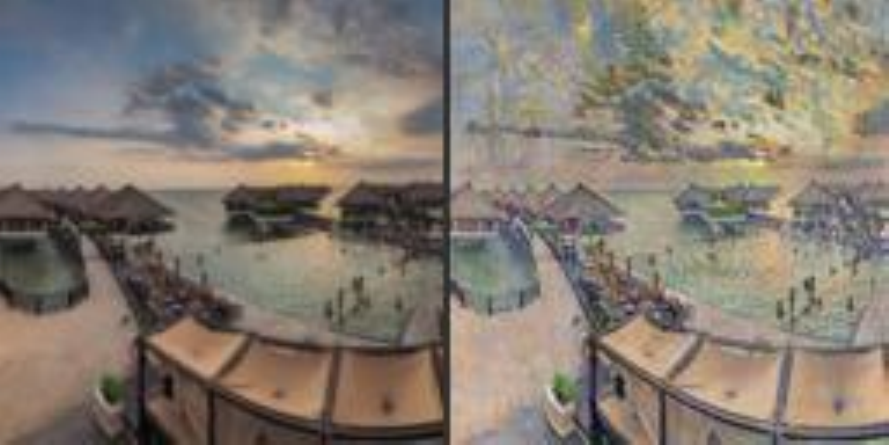} \\[0.2cm]
    \includegraphics[width = 0.6 \textwidth, clip = True]{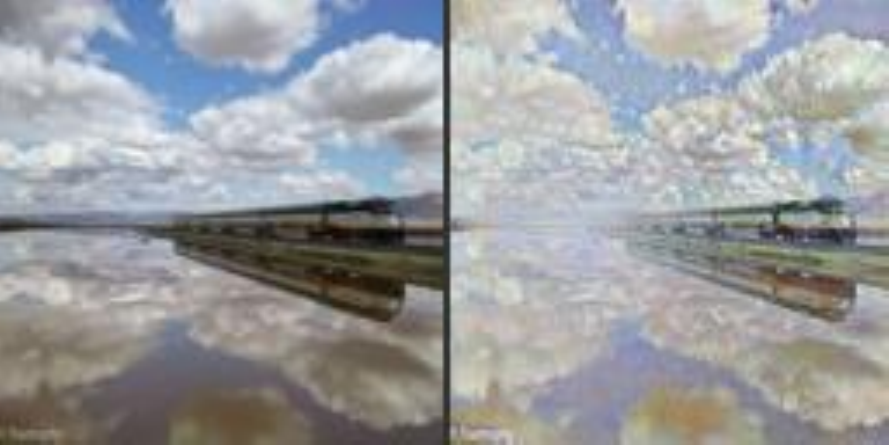} \\[0.2cm]
    \includegraphics[width = 0.6 \textwidth, clip = True]{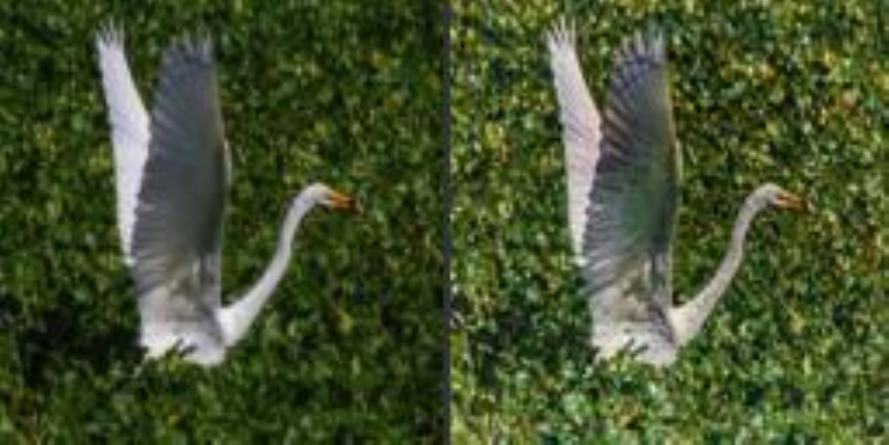}  
    \caption{Additional examples of translating landscape photos into Monet paintings using TTC. We included the fourth example as it shows that when the transport distance for a particular image is larger than the average distance (as this image is far from a typical Monet painting in the dataset), TTC may transport the image a smaller distance than it should.}
    \label{fig:more_monets}
\end{figure*}

\end{document}